\def\@tocline#1#2#3#4#5#6#7{\relax
  \ifnum #1>\c@tocdepth 
  \else
    \par \addpenalty\@secpenalty\addvspace{#2}%
    \begingroup \hyphenpenalty\@M
    \@ifempty{#4}{%
      \@tempdima\csname r@tocindent\number#1\endcsname\relax
    }{%
      \@tempdima#4\relax
    }%
    \parindent\z@ \leftskip#3\relax \advance\leftskip\@tempdima\relax
    \rightskip\@pnumwidth plus4em \parfillskip-\@pnumwidth
    #5\leavevmode\hskip-\@tempdima
      \ifcase #1
       \or\or \hskip 1em \or \hskip 2em \else \hskip 3em \fi%
      #6\nobreak\relax
    \hfill\hbox to\@pnumwidth{\@tocpagenum{#7}}\par
    \nobreak
    \endgroup
  \fi}
\newcommand{\Z}{\mathbb{Z}}
\newcommand{\cB}{\mathcal{B}}
\newcommand{\perf}{\mathrm{perf}}
\DeclareRobustCommand{\graded}{%
    \@ifnextchar\bgroup{\graded@with}{\ensuremath{\mathrm{gr}}}%
}
\newcommand{\graded@with}[1]{\ensuremath{#1\mathrm{-gr}}}
\DeclareMathOperator{\grHom}{\underline{Hom}}
\newcommand{\comp}[2]{\Lambda_{#1}(#2)}
\newcommand{\dcomp}[2]{d\Lambda_{#1}(#2)}
\newcommand{\grcomp}[2]{\Lambda^{\graded}_{#1}(#2)}
\DeclareMathOperator{\grwedge}{{\wedge \graded}}
\def\var{\overline}
\DeclareMathOperator{\Supp}{Supp}
\DeclareMathOperator{\Hom}{Hom}
\DeclareMathOperator{\Ker}{Ker}
\DeclareMathOperator{\ev}{ev}
\DeclareMathOperator{\ppt}{ppt}
\theoremstyle{plain}
\newtheorem{theorem}{Theorem}[section]
\newtheorem{proposition}[theorem]{Proposition}
\newtheorem{lemma}[theorem]{Lemma}
\newtheorem{corollary}[theorem]{Corollary}
\newtheorem*{claim*}{Claim}
\newtheorem{theoremA}{Theorem}
\theoremstyle{definition}
\newtheorem{definition}[theorem]{Definition}
\newtheorem{example}[theorem]{Example}
\newtheorem{construction}[theorem]{Construction}
\newtheorem*{setup*}{Setup}
\theoremstyle{remark}
\newtheorem{remark}[theorem]{Remark}
\theoremstyle{plain}
\numberwithin{equation}{section}
\crefname{theorem}{Theorem}{Theorems}
\crefname{theoremA}{Theorem}{Theorems}
\crefname{proposition}{Proposition}{Propositions}
\crefname{lemma}{Lemma}{Lemmas}
\crefname{corollary}{Corollary}{Corollaries}
\crefname{conjecture}{Conjecture}{Conjectures}
\crefname{claim}{Claim}{Claims}
\crefname{notation}{Notation}{Notations}
\crefname{remark}{Remark}{Remarks}
\crefname{example}{Example}{Examples}
\crefname{definition}{Definition}{Definitions}
\crefname{construction}{Construction}{Constructions}
\title{Graded perfectoid rings}
\author{Ryo Ishizuka}
\address{Institute of Science Tokyo, Tokyo 152-8551, Japan}
\email{ishizuka.r.ac@m.titech.ac.jp}
\author{Shou Yoshikawa}
\address{Institute of Science Tokyo, Tokyo 152-8551, Japan}
\email{yoshikawa.s.9fe9@m.isct.ac.jp}
\thanks{2020 {\em Mathematics Subject Classification\/}: 14G45, 13A02, 13J10}
\keywords{Perfectoid rings, graded rings, perfect prisms, perfectoidization}
\begin{document}

\begin{abstract}
We introduce and study graded perfectoid rings as graded analogues of Scholze’s (integral) perfectoid rings.  
We establish a categorical equivalence between graded perfectoid rings and graded perfect prisms, extending the Bhatt--Scholze's correspondence to the graded setting.
We also construct the initial graded perfectoid cover of any graded semiperfectoid rings and prove a graded version of Andr\'e's flatness lemma.
These results lay the foundations for a graded theory of perfectoid rings.
\end{abstract}

\maketitle

\tableofcontents

\setcounter{tocdepth}{1}

\section{Introduction}

Let $G$ be a torsion free abelian group and let \(p\) be a prime number.
Perfectoid rings, introduced by Scholze in \cite{scholze2012Perfectoida}, play a central role in modern $p$-adic geometry, for example, the theory of prismatic cohomology developed by Bhatt and Scholze~\cite{bhatt2022Prismsa}.
Beyond that, after solving the direct summand conjecture \cites{andre2018Conjecture}, perfectoid methods have already become a fundamental tool in mixed characteristic commutative ring theory and singularity theory, for example \cites{ma2021Singularities,bhatt2024Perfectoid}. Such methods mainly focus on the usage of perfectoid rings as a nice cover of Noetherian local rings.

On the other hand, graded rings play an important role both in commutative algebra and algebraic geometry, since their algebraic properties are closely related to global geometric properties of projective varieties (see, for example, \cites{goto1978Graded}).
It is natural to ask that how to apply perfectoid methods in such global geometry.
However, except in trivial cases, a graded ring is never \(p\)-complete and hence cannot be perfectoid.  
Thus, when one wishes to study global conditions in mixed characteristic, it is natural to consider a graded analogue of perfectoid rings.

In this paper, we introduce and study \emph{$G$-graded perfectoid rings}, which is a desired analogue we thought.
The goal of this work is to extend the foundational results of perfectoid theory to the graded setting, especially, to establish categorical equivalences with graded perfect prisms.

Our starting point is the following definition of graded analogues of the basic notions related to perfectoid rings.
Here, a \emph{\(G\)-graded ring} \(R\) is a commutative ring \(R\) equipped with the decomposition \(R = \bigoplus_{g \in G} R_g\) of abelian groups such that \(R_g R_{h} \subseteq R_{g + h}\).

\begin{definition}[{\Cref{DefGradedPerfectoid} and \Cref{graded-perfect-prism}}]
Let $R$ be a $G$-graded ring.
\begin{itemize}
    \item We say that $R$ is \emph{graded perfectoid} if $R$ is $p$-adically gradedwise complete,\footnote{For an element \(x \in R_0\), we say that \(R\) is \emph{\(x\)-adically gradedwise complete} if each \(R_0\)-module \(R_g\) is \(x\)-adically complete for all \(g \in G\). See \Cref{DefTopGraded} for more general definition.} and the \(p\)-adic completion $R^{\wedge p}$ of \(R\) is perfectoid.
    \item We say that $R$ is \emph{graded semiperfectoid} if there exist a $G$-graded perfectoid ring $P$ and a graded surjective ring homomorphism $P \to R$.
    \item We say that $(R,I)$ is a \emph{graded perfect prism} if $R$ is a $\delta$-ring with $\delta(R_g) \subseteq R_{pg}$, $I$ is a homogeneous ideal, and $(R^{\wedge (p,I)},I)$ is a perfect prism, where \(R^{\wedge(p, I)}\) is the \((p, I)\)-adic completion of \(R\).
\end{itemize}
\end{definition}

Our first main result shows that the graded perfectoid condition can be expressed in several equivalent forms:

\begin{theoremA}[\cref{except-for-p-comp,CatEquivPerfdPrism}, cf.~\cite{gabber2018Foundations}*{Proposition~16.6.1}]\label{intro:equiv-perfd}
Let $R$ be a $p$-adically gradedwise complete $G$-graded ring.
Then the following are equivalent:
\begin{enumerate}
    \item This $R$ is a graded perfectoid ring.
    \item The \(p\)-adic completion \(R^{\wedge p}\) and the degree-\(0\) part \(R_0\) are perfectoid;
    \item There exists a topological nilpotent element \(\varpi \in R_0\)  with \(p \in \varpi^pR_0\), the Frobenius map
    \[
        R/\varpi R \xrightarrow{\,a \mapsto a^p\,} R/\varpi^p R
    \]
    is an isomorphism, and the multiplicative map
    \[
        R[\varpi^\infty] \xrightarrow{\,a \mapsto a^p\,} R[\varpi^\infty]
    \]
    is bijective. 
    \item There exists a $G$-graded perfect prism $(A,I)$ such that $R \simeq A/I$ as graded rings.
\end{enumerate}
\end{theoremA}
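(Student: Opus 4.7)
The plan is to split the four-way equivalence into two natural pieces. The equivalences $(1) \Leftrightarrow (2) \Leftrightarrow (3)$ amount to a graded refinement of Fontaine's criterion for perfectoid rings: condition (3) exhibits a ``witness element'' $\varpi \in R_0$ together with its Frobenius and torsion conditions, (2) packages part of this into the statement that $R^{\wedge p}$ is already perfectoid, and (1) adds that the degree-zero part $R_0$ is also perfectoid. I would establish $(1) \Leftrightarrow (2) \Leftrightarrow (3)$ first, then build $(1) \Leftrightarrow (4)$ on top as a graded analogue of the Bhatt--Scholze correspondence.

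For $(1) \Rightarrow (2)$, the perfectoid structure on $R_0$ yields the required $\varpi \in R_0$ with $p \in \varpi^p R_0$ by the standard Fontaine theory. For $(2) \Rightarrow (1)$, the key point is that $\varpi \in R_0$ together with $p \in \varpi^p R_0$ forces the $\varpi$-adic and $p$-adic topologies on $R_0$ to coincide, so $R_0$ inherits a perfectoid structure from $R^{\wedge p}$ as its degree-zero part, while gradedwise $p$-adic completeness of $R$ reduces to gradedwise $\varpi$-adic completeness and follows because $\varpi$ lies in degree $0$. For $(2) \Leftrightarrow (3)$, I would apply the ungraded Fontaine criterion to $R^{\wedge p}$ with the witness $\varpi$: since $\varpi \in R_0$, gradedwise $\varpi$-adic completeness identifies $R/\varpi^n R \simeq R^{\wedge p}/\varpi^n R^{\wedge p}$ for every $n$, so the Frobenius isomorphism and $\varpi^\infty$-torsion bijection transfer faithfully between $R$ and $R^{\wedge p}$.

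For $(1) \Leftrightarrow (4)$, I would adapt the tilt--untilt construction to the graded setting. Given $R$ graded perfectoid, form the tilt $R^\flat \coloneq \varprojlim_F R/\varpi$, which inherits a $G$-grading from $R$; then $A = W(R^\flat)$ carries a graded $\delta$-ring structure with $\delta(A_g) \subseteq A_{pg}$, and the kernel of Fontaine's map $\theta : A \to R^{\wedge p}$ is a principal ideal whose generator can be chosen homogeneous, producing the required $(A, I)$ with $A/I \simeq R$. Conversely, starting from a graded perfect prism $(A,I)$, the ungraded Bhatt--Scholze equivalence applied to $(A^{\wedge(p,I)}, I)$ yields a perfectoid ring $A^{\wedge(p,I)}/I$; homogeneity of $I$ and compatibility of $\delta$ with the grading then descend this to show that $A/I$ satisfies condition (1), with $R_0$ arising as the degree-zero part.

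The main obstacle will be verifying the compatibility of the Witt vector construction with the grading in the $(1) \Rightarrow (4)$ direction. Although $R^\flat$ inherits a grading naturally from $R$, the Witt polynomials combine Teichm\"uller lifts in ways that could a priori mix degrees, and one must check carefully that the grading on $A = W(R^\flat)$ is well defined, that $\delta$ sends $A_g$ into $A_{pg}$, and that the distinguished generator of $\ker(\theta)$ can indeed be taken homogeneous. A companion subtlety throughout is that a graded perfectoid $R$ is typically not $p$-complete in the graded sense, so one must carefully navigate between $R$ and its ungraded $p$-adic completion $R^{\wedge p}$ at every stage.
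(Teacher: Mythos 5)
Your high-level decomposition is the same as the paper's: establish $(1)\Leftrightarrow(2)\Leftrightarrow(3)$ as a graded refinement of Fontaine's criterion, then $(1)\Leftrightarrow(4)$ via a graded tilt--untilt. But there is a genuine gap in the central step, and a secondary one in the tilt construction.

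The gap is in $(2)\Rightarrow(1)$, where you assert that ``$R_0$ inherits a perfectoid structure from $R^{\wedge p}$ as its degree-zero part.'' This is exactly the hard part and does not follow from inclusion $R_0 \subseteq R^{\wedge p}$: a subring of a perfectoid ring containing a pseudouniformizer need not be perfectoid. What you actually need is that the Frobenius surjection $R^{\wedge p}/\varpi^p \to R^{\wedge p}/\varpi^p$ ``descends'' to $R_0/\varpi R_0 \to R_0/\varpi^p R_0$. The paper's key tool is what it calls Lemma~\ref{p-th-power}: for any element $a$ of $R^{\wedge p}$ with topological homogeneous decomposition $a = \sum_g a_g$, one has $a^p \equiv \sum_g a_g^p \pmod{pR^{\wedge p}}$ (and equality on $p$-torsion). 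This is what makes the $p$-th power map mod $p$ \emph{graded} (with degree multiplied by $p$), so that one can compare homogeneous components and, in particular, extract the degree-zero statement for $R_0$. It also shows that $\Supp R \subseteq G[1/p]$, i.e.\ effectively $G = G[1/p]$, which is not mentioned in your sketch. Without this lemma and the surrounding machinery of topological homogeneous decompositions (the paper's ``pro-$G$-graded ring'' framework, together with Proposition~\ref{purity-homog-ideal} which gives $JR^{\wedge p} \cap R_{g} = J\cap R_g$ for homogeneous $J$), the transfer from $R^{\wedge p}$ to $R_0$ is unjustified. Relatedly, the torsion condition in $(3)$ does not transfer between $R$ and $R^{\wedge p}$ for free: the paper uses boundedness of $\varpi^\infty$-torsion (via Lemma~\ref{torsion-compare} and \cite[Lemma 3.16]{ishiro2025Perfectoida}) to conclude $R^{\wedge p}[\varpi^\infty] = R[\varpi^\infty]$, a point your ``transfers faithfully'' glosses over.

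For $(1)\Leftrightarrow(4)$, you correctly flag the Witt-vector compatibility as the obstacle, but the preceding step is already delicate: the naive inverse limit $\varprojlim_F R/\varpi$ does not inherit a $G$-grading, because Frobenius multiplies degrees by $p$. One must instead take the degreewise inverse limit $\bigoplus_g \varprojlim_e (R/p)_{p^{-e}g}$ (the paper's $R^{\flat,\graded}$ in Example~\ref{graded-witt-tilt}), which only makes sense once $G = G[1/p]$ is established --- again a consequence of the $p$-th power lemma. Beyond that, a subtlety your sketch does not address is that the equivalence must be one of \emph{topological} graded rings: the paper's Proposition~\ref{top-prism} is needed to identify the topology on $W^{\graded}(R^{\flat,\graded})$ with an adic topology, and the proof of Theorem~\ref{CatEquivPerfdPrism} goes through four explicit steps precisely to control this. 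So while the outline is right, the proposal as written does not yet contain the ingredients that make the argument close.
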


The third condition is base on the similar one for usual perfectoid rings given by \cites{ishiro2025Perfectoida, gabber2018Foundations} (see \Cref{EquivPerfectoid}). This does not require any Witt/tilt construction whose graded variant (\Cref{graded-witt-tilt}) is a little bit complicated.

On the forth one, we have to use such constructions and, more generally, we have the following theorem generalizing the fundamental equivalence between perfectoid rings and perfect prisms to the graded context:

\begin{theoremA}[\Cref{cat-equiv}]\label{intro:cat-equiv}
The following two categories are equivalent:
\begin{itemize}
    \item the category of \(G\)-graded perfectoid rings $R$;
    \item the category of \(G\)-graded perfect prisms \((A,I)\).
\end{itemize}
This categorical equivalence is compatible with the usual one in \cite[Theorem 3.3]{bhatt2022Prismsa} through the completion functors \(R \mapsto R^{\wedge p}\) and \((A, I) \mapsto (A^{\wedge(p, I)}, I)\).
\end{theoremA}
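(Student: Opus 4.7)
The plan is to construct explicit quasi-inverse functors and then match them against the classical Bhatt--Scholze equivalence via the completion functors.

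In the easy direction, given a graded perfect prism $(A, I)$, I would define $F(A, I) \coloneq A/I$ endowed with the quotient grading, which is well-defined since $I$ is homogeneous by assumption. Theorem A (specifically the implication $(4) \Rightarrow (1)$) tells us that $F(A, I)$ is a graded perfectoid ring, and the construction is evidently functorial in graded $\delta$-ring maps preserving the ideal. Compatibility with Bhatt--Scholze on this side amounts to the identification $(A^{\wedge(p,I)})/I \simeq (A/I)^{\wedge p}$, which I would deduce from the fact that the $(p,I)$-adic and $p$-adic topologies coincide modulo $I$.

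For the reverse direction, the natural candidate is a graded analogue of Fontaine's construction. I would form the graded tilt $R^{\flat,\graded}$ of $R$ and then the graded Witt vectors $W^{\graded}(R^{\flat,\graded})$ equipped with the induced $\delta$-structure. The key points to verify are that the Witt vectors carry a natural grading for which $\delta$ sends the $g$-th homogeneous part into the $pg$-th part, that Fontaine's map $\theta$ descends to a homogeneous surjection onto $R$ whose kernel $I$ is generated by a distinguished element, and that the pair $(A, I)$ so obtained is a graded perfect prism. For this last point, I would $(p, I)$-complete and use the classical theorem of Bhatt--Scholze to identify $(A^{\wedge(p,I)}, I)$ with the usual perfect prism $(A_{\inf}(R^{\wedge p}), \ker \theta)$. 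The assignment $G(R) \coloneq (A, I)$ is then functorial by naturality of the graded tilt and the graded Witt vectors.

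To verify that $F$ and $G$ are mutually quasi-inverse, one direction is essentially built in: $F \circ G(R) = A/I \simeq R$ as graded rings, by the graded analogue of Fontaine's surjectivity. For the other direction, starting from $(A, I)$, I must produce a natural graded $\delta$-isomorphism $A \simeq W^{\graded}((A/I)^{\flat,\graded})$ matching the distinguished ideals. The strategy is to reduce this to the non-graded Bhatt--Scholze equivalence after $(p, I)$-adic completion, where the two sides are known to coincide, and then to transfer the identification back to the uncompleted graded setting, using the uniqueness of the grading enforced by the condition $\delta(A_g) \subseteq A_{pg}$ together with the characterization of $A$ coming from Theorem A.

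The main obstacle is the construction and analysis of the graded Witt-vector and graded-tilt functors, and in particular checking that the resulting $\delta$-ring satisfies $\delta(A_g) \subseteq A_{pg}$ and that the tilting procedure respects the grading in a way compatible with $\theta$. Once this graded lifting of Fontaine's machinery is pinned down, the categorical equivalence reduces to a diagrammatic check against the Bhatt--Scholze equivalence, and the compatibility with the completion functors is then automatic from the construction.
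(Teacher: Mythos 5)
Your overall strategy matches the paper's: construct the functor in the easy direction via $(A,I)\mapsto A/I$, construct the reverse functor via graded tilt and graded Witt vectors, and compare to Bhatt--Scholze through completion. However, there are two genuine gaps, one minor and one substantial.

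The minor gap is circularity in the easy direction. You invoke ``Theorem A $(4)\Rightarrow(1)$'' to conclude that $A/I$ is graded perfectoid, but that implication of Theorem~A is precisely what \Cref{CatEquivPerfdPrism} is cited for; you cannot use it to prove itself. One must argue directly, as the paper does in Step~1 of the proof: using $I=(d)$ with $d\in A_0$ (\Cref{graded-perfect-prism}), showing that $dA$ is topologically closed via the reducedness of the perfectoid quotient, invoking \Cref{quot-pro-gra} to get the pro-$G$-graded structure, and then applying \Cref{graded-perfectoid-equiv} using the fact that $A_0/(d)$ is perfectoid. This is fixable, but the logical dependency matters.

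The substantial gap is in the direction $\mathcal{G}\circ\mathcal{F}\simeq\mathrm{id}$. Your plan is to $(p,I)$-complete, invoke the non-graded Bhatt--Scholze isomorphism, and then ``transfer back to the uncompleted graded setting using the uniqueness of the grading enforced by $\delta(A_g)\subseteq A_{pg}$.'' This last step is not a proof. Having an isomorphism $A^{\wedge(p,I)}\simeq (A')^{\wedge(p,I')}$ of complete rings does not, by itself, tell you that it restricts to an isomorphism of the dense graded subrings $A_{\graded}\to A'_{\graded}$, nor that it respects the linear topologies as graded topological rings. The condition $\delta(A_g)\subseteq A_{pg}$ constrains but does not uniquely determine a grading, and you also cite ``the characterization of $A$ from Theorem~A,'' but Theorem~A characterizes graded perfectoid rings, not graded perfect prisms, so it doesn't bear on $A$ directly. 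The paper explicitly remarks that the topology is the main difficulty and that one cannot merely mimic the non-graded argument: instead it builds the comparison isomorphism by hand in Step~4, first producing a graded topological isomorphism $\sigma_{\graded}\colon A_{\graded}/(p)\to A'_{\graded}/(p)$ (using \Cref{graded-top-nakayama} to bootstrap from the isomorphism mod $(p,d)$), and then lifting to a graded topological isomorphism of Witt-vector rings via the explicit map $\psi_{\graded}$ of \Cref{top-prism}(3). A similar point applies, less severely, to $\mathcal{F}\circ\mathcal{G}\simeq\mathrm{id}$: you treat it as automatic, but one must check that $\theta_{\graded}$ is a homeomorphism, i.e., continuous and open, not merely a bijection of graded rings (paper Step~3). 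In short, the blueprint is right, but the topological bookkeeping that makes Step~4 actually go through is missing, and that is where the real content of the theorem lies.
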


Next task is to prove the existence of the initial object representing the perfectoidization functor in the graded setting:

\begin{theoremA}[\cref{gr-perfdion}]\label{intro:gr-perfdion}
Let \(R\) be a \(G\)-graded semiperfectoid ring.
Then there exists a \(G\)-graded perfectoid ring \(R_{\perfd,{\gr}}\) over \(R\),
which is the initial \(G\)-graded perfectoid over \(R\) and its \(p\)-adic completion coincides with the perfectoidization \(R_{\perfd}\) of \(R\) in the sense of \cite{bhatt2022Prismsa}.  
\end{theoremA}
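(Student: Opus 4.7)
The plan is to imitate the Bhatt--Scholze construction of the perfectoidization via prismatic envelopes in the $G$-graded setting, and then transport the result back to graded perfectoid rings using \Cref{intro:cat-equiv}. Fix a graded surjection $\pi\colon P \twoheadrightarrow R$ with $P$ a $G$-graded perfectoid ring, and homogeneous generators $\bar f_1, \bar f_2, \dots$ of $\ker\pi$. Applying \Cref{intro:cat-equiv}, let $(A,I)$ be the $G$-graded perfect prism corresponding to $P$, so that $A/I \cong P$ as graded rings; lift each $\bar f_i$ to a homogeneous element $f_i \in A$ of the same degree and set $J \coloneq I + (f_i)_i$, a homogeneous ideal with $A/J \cong R$ as graded rings.

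The main construction is a graded prismatic envelope: form $B \coloneq A\{f_i/d\}_i$, the free graded $\delta$-$A$-algebra obtained by adjoining homogeneous symbols $x_i$ (of the appropriate degrees so that each relation $d\, x_i = f_i$ is homogeneous, where $d$ is a homogeneous distinguished generator of $I$), followed by gradedwise $(p, I)$-adic completion. Because all the defining data are homogeneous, the $\delta$-structure on $B$ satisfies $\delta(B_g) \subseteq B_{pg}$ and $IB$ is a homogeneous ideal. Perfectifying gradedwise via a Frobenius colimit then produces a graded $\delta$-ring $B_{\perf}$; to verify that $(B_{\perf}, IB_{\perf})$ is a $G$-graded perfect prism it suffices, by definition, to check that its $(p,I)$-adic completion is a perfect prism, and this completion is precisely the ordinary Bhatt--Scholze prismatic envelope used to compute $(R^{\wedge p})_{\perfd}$ in \cite{bhatt2022Prismsa}. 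Setting $R_{\perfd,\gr} \coloneq B_{\perf}/IB_{\perf}$ and invoking \Cref{intro:cat-equiv} then yields a $G$-graded perfectoid ring together with the natural graded map $R \to R_{\perfd,\gr}$ induced by $A \to B_{\perf}$ modulo $I$.

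For the universal property, let $S$ be any $G$-graded perfectoid ring equipped with a graded map $R \to S$, and let $(A_S, I_S)$ be its graded perfect prism under \Cref{intro:cat-equiv}. The composite $P \twoheadrightarrow R \to S$ corresponds under this equivalence to a unique graded $\delta$-ring map $A \to A_S$; since $\bar f_i \mapsto 0$ in $S$, each $f_i$ lands in $I_S$, so the universal property of the prismatic envelope extends this to a unique graded $\delta$-ring map $B \to A_S$, and then to $B_{\perf} \to A_S$ because $A_S$ is perfect. Reducing modulo $I$ provides the required unique graded map $R_{\perfd,\gr} \to S$. For the $p$-adic completion, applying the functor $(A', I') \mapsto (A'^{\wedge(p,I')}, I')$ to $(B_{\perf}, IB_{\perf})$ and invoking the compatibility clause of \Cref{intro:cat-equiv} identifies $(R_{\perfd,\gr})^{\wedge p}$ with the ring attached to the ordinary Bhatt--Scholze prismatic envelope, i.e.\ with $R_{\perfd}$ in the sense of \cite{bhatt2022Prismsa}.

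The main obstacle I expect is the construction and analysis of the graded prismatic envelope itself: one must verify that gradedwise $(p,I)$-adic completion of the free graded $\delta$-$A$-algebra is well behaved (preserving the $\delta$-structure, respecting the grading, and commuting with graded perfection), so that the Bhatt--Scholze arguments of \cite{bhatt2022Prismsa} can be adapted essentially verbatim once the grading is taken into account. In particular, one needs the existence of a homogeneous distinguished generator $d$ of $I$, and a careful check that the universal property of the graded envelope recovers the ordinary one after $(p,I)$-adic completion, which is what ultimately feeds both the initiality and the compatibility with $R_{\perfd}$.
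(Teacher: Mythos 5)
Your proposal takes a genuinely different route from the paper's. You transport the problem to the prism side via \Cref{intro:cat-equiv}, build a \emph{graded prismatic envelope} $A\{f_i/d\}_i$ together with its graded perfection $B_{\perf}$, and then define $R_{\perfd,\gr}\coloneq B_{\perf}/IB_{\perf}$, mirroring the way Bhatt--Scholze compute $R_{\perfd}$ for semiperfectoid $R$. The paper instead stays entirely on the ring side and never touches prismatic envelopes: it writes $R_{\perfd}$ concretely as a fiber product built from the $p$-root closure $C(R')$ of the $p$-torsion-free quotient $R'=R/R[p^\infty]$ together with the perfection $(R/pR)_{\perf}$ (via \Cref{DecompositionPerfectoidization} and \Cref{perfectoidization-general}), equips each factor with a grading using \Cref{Graded-pRootClosure}, and checks the resulting ring is graded perfectoid via \Cref{except-for-p-comp}. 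Each approach has advantages: the $p$-root closure route is concrete and only needs tools already in hand (at the cost of the hypothesis that $G$ be totally ordered, which is what makes $C(R')$ inherit a grading), whereas a prismatic-envelope route would, if carried out, be closer to the shape of the non-graded argument and may avoid the total-order restriction.

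The gap in your proposal is exactly the thing you call ``the main obstacle'': the graded prismatic envelope and its graded perfection do not exist in this paper's framework and are not a routine ``careful check.'' Concretely, you need to (i) construct a free graded $\delta$-$A$-algebra and its quotient by the $\delta$-ideal generated by the $\delta^n(f_i-dx_i)$, and show the result carries a $G$-grading compatible with $\delta(B_g)\subseteq B_{pg}$; (ii) make sense of its gradedwise $(p,I)$-adic completion and show it remains a $\delta$-ring and is still graded; (iii) define a gradedwise Frobenius colimit (note that $\phi$ does not preserve degrees, so the colimit must be formed over the shifted system $B\to B^{[1/p]}\to B^{[1/p^2]}\to\cdots$, as the paper does for the tilt in \Cref{graded-witt-tilt}), followed by another gradedwise completion; (iv) prove that the underlying (ungraded) $(p,I)$-adic completion of the resulting object agrees with the Bhatt--Scholze perfected prismatic envelope, which requires comparing gradedwise completion with derived/classical completion of a $\delta$-ring that is not yet known to have bounded torsion; and (v) establish the universal property in the graded category from the one in the ungraded category. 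Steps (ii)--(iv) are precisely the kind of completion-comparison issues the paper's pro-$G$-graded machinery was built to handle (e.g.\ \Cref{BoundedTorsionGradedPerfd}), but none of it is available for prismatic envelopes in this paper, so you would have to develop it from scratch. Until that is done, the construction of $B_{\perf}$ as a graded perfect prism, and hence of $R_{\perfd,\gr}$, is not justified.
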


We give two applications of this existence of graded perfectoidization.
First, we can establish a graded analogue of Andr\'e’s flatness lemma, which ensures the existence of compatible systems of $p$-power roots under $p$-completely faithfully flat extensions:

\begin{theoremA}[\Cref{Andre-graded}]\label{intro:Andre-graded}
Let \(R\) be a \(G\)-graded perfectoid ring and let \(\mcalS\) be a set of homogeneous elements in some graded integral extension of \(R\).
Then there exists a $p$-completely faithfully flat map $R \to R'$ of \(G\)-graded perfectoid rings such that \(R'\) admits compatible systems of \(p\)-power roots of elements in \(\mcalS\).
\end{theoremA}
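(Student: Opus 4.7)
The plan is to mirror the proof of the non-graded Andr\'e flatness lemma by Bhatt--Scholze: freely adjoin $p$-power roots of the elements of $\mathcal{S}$ in the graded world, take the graded perfectoidization supplied by \cref{intro:gr-perfdion}, and reduce $p$-complete faithful flatness to the non-graded case. Write $\mathcal{S} = \{f_i\}_{i \in I}$ with each $f_i$ of homogeneous degree $g_i \in G$, enlarging $G$ to its $p$-divisible hull $G[1/p^\infty]$ if necessary (this is harmless since $G$ is torsion-free). Consider the graded algebra $R[X_i^{1/p^\infty} : i \in I]$, where $X_i^{1/p^k}$ is a homogeneous variable of degree $g_i/p^k$ and the relations $(X_i^{1/p^{k+1}})^p = X_i^{1/p^k}$ are imposed; let $\tilde{P}$ denote its $\varpi$-adic gradedwise completion, where $\varpi \in R_0$ is a topologically nilpotent element with $p \in \varpi^p R_0$ supplied by the graded perfectoid hypothesis on $R$. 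By characterization~(2) of \cref{intro:equiv-perfd}, the ring $\tilde{P}$ is graded perfectoid: its $p$-adic completion is the usual perfectoid Tate algebra $R^{\wedge p}\langle X_i^{1/p^\infty}\rangle$, while $\varpi \in \tilde{P}_0$ remains topologically nilpotent with $p \in \varpi^p \tilde{P}_0$. The graded quotient
\[
    \tilde{S} := \tilde{P}\big/\bigl(X_i - f_i : i \in I\bigr)
\]
is therefore a graded semiperfectoid $R$-algebra.

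Now apply \cref{intro:gr-perfdion} to obtain the graded perfectoid ring $R' := \tilde{S}_{\perfd,\gr}$ together with its canonical map $R \to R'$. The images of the $X_i^{1/p^k}$ in $R'$ form the desired compatible systems of $p$-power roots of the $f_i$ by construction. For $p$-complete faithful flatness, the compatibility clause of \cref{intro:gr-perfdion} identifies $(R')^{\wedge p}$ with the Bhatt--Scholze perfectoidization of $\tilde{S}^{\wedge p}$, which is (after forgetting the grading) exactly the standard semiperfectoid $R^{\wedge p}$-algebra obtained by formally adjoining compatible systems of $p$-power roots of the $f_i$. The non-graded Andr\'e flatness lemma \cite{andre2018Conjecture,bhatt2022Prismsa} then yields $p$-complete faithful flatness of $R^{\wedge p} \to (R')^{\wedge p}$; since this property is detected after $p$-adic completion, $R \to R'$ is itself $p$-completely faithfully flat.

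The main technical obstacle is the verification that $\tilde{P}$ is graded perfectoid: one must identify its $p$-adic completion with the perfectoid Tate algebra $R^{\wedge p}\langle X_i^{1/p^\infty}\rangle$ and confirm that the hypotheses of \cref{intro:equiv-perfd}(2) survive both the formation of the polynomial algebra and the gradedwise completion. A secondary, purely bookkeeping, concern is the enlargement of $G$ to $G[1/p^\infty]$; this does not affect the underlying ring-theoretic content, but one must phrase the result in a grading group large enough to accommodate the $p$-power roots of homogeneous elements of nonzero degree.
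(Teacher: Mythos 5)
Your overall strategy — freely adjoin graded $p$-power roots, pass to the graded perfectoidization of \cref{intro:gr-perfdion}, and descend $p$-complete faithful flatness from the non-graded Andr\'e lemma after $p$-adic completion — is exactly the route the paper follows (\Cref{GradedAndre} and \Cref{Andre-graded}). However, there is a genuine gap in the way you build the intermediate semiperfectoid ring.

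You form $\tilde{S} := \tilde{P}/(X_i - f_i)$, but the elements $f_i \in \mathcal{S}$ live in a graded \emph{integral extension} of $R$, not in $R$ itself, so $X_i - f_i$ is not an element of $\tilde{P} = \bigl(R[X_i^{1/p^\infty}]\bigr)^{\grwedge}$ and the quotient is undefined as written. This is not a cosmetic issue: the extra generality in the statement (elements in an integral extension rather than in $R$) is precisely what requires a different device. The paper handles it by encoding each $f_i$ through a homogeneous \emph{monic} polynomial $g_i(T) \in R_{\graded}[T]$ with $g_i(f_i) = 0$ (which exists because the extension is graded and integral, after assigning $\deg(T) = \deg(f_i)$), and then setting $\tilde{S} := \tilde{P}/\bigl(g_i(X_i) : i \in I\bigr)$. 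In this quotient the image of $X_i$ satisfies the same monic relation as $f_i$, and the images of the $X_i^{1/p^k}$ give the required compatible systems of $p$-power roots — without ever needing $f_i$ to lie in $\tilde{P}$. If you make this replacement, the rest of your argument goes through and coincides with the paper's.

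Two smaller points. First, your enlargement of $G$ to $G[1/p^\infty]$ is actually automatic: by \cref{intro:equiv-perfd} (or \Cref{graded-perfectoid-equiv}), any $G$-graded perfectoid ring already satisfies $G = G[1/p]$, so no genuine enlargement is needed. Second, your appeal to \cref{intro:gr-perfdion} implicitly uses that $G$ admits a compatible total order; this is always achievable for a torsion-free abelian group (embed in a $\mathbb{Q}$-vector space and order lexicographically), and the paper assumes it in \Cref{perfectoidization-graded}, so you should at least mention this step.
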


Second, we can show that the equivalence of perfectoid purity introduced in \cite{bhatt2024Perfectoid}.

\begin{theoremA}[{\Cref{GradedPerfdPure} and \Cref{gr-p-pure}}] \label{intro:gr-p-pure}
    Let \(R\) be a \(p\)-torsion-free \(G\)-graded Noetherian ring.
    Assume that $R$ has a unique graded maximal ideal $\mfrakm$ with $p \in \mfrakm$.
    Then the following are equivalent:
    \begin{enumerate}
        \item This ring \(R\) is graded perfectoid pure in the sense that there exists a graded ring homomorphism \(R \to P\) to a graded perfectoid ring \(P\) such that pure as a ring homomorphism.
        \item The \(p\)-adic completion \(R^{\wedge p}\) is perfectoid pure.
        \item The localization \(R_{\mfrakm}\) is perfectoid pure.
    \end{enumerate}
    Moreover, in this case we have $\ppt(R^{\wedge p},p) = \ppt(R_\mfrakm,p)$.
\end{theoremA}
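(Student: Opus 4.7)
The plan is to prove the cyclic chain of implications $(1) \Rightarrow (2) \Leftrightarrow (3) \Rightarrow (1)$, using the graded perfectoidization (\cref{gr-perfdion}) and the graded Andr\'e flatness lemma (\cref{Andre-graded}) as the main tools, together with the close relationship between the various localizations and completions of $R$ at its graded maximal ideal $\mathfrak{m}$.

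For $(1) \Rightarrow (2)$, I would take a pure graded map $R \to P$ to a graded perfectoid $P$ and form its $p$-adic completion $R^{\wedge p} \to P^{\wedge p}$. The target is perfectoid by the very definition of a graded perfectoid ring. For the purity statement, since $R$ is Noetherian, $p$-adic completion is flat, and purity passes to the completion by a standard Artin--Rees style argument: testing on finitely generated modules, one has $M^{\wedge p} = M \otimes_R R^{\wedge p}$ and the induced map $M^{\wedge p} \to M^{\wedge p} \otimes_{R^{\wedge p}} P^{\wedge p}$ inherits injectivity from $M \to M \otimes_R P$.

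For $(2) \Leftrightarrow (3)$, the main observation is that since $p \in \mathfrak{m}$ and $R$ is Noetherian with unique graded maximal ideal, both $R^{\wedge p}$ and $R_{\mathfrak{m}}$ admit the common $\mathfrak{m}$-adic completion $R^{\wedge \mathfrak{m}} = (R_{\mathfrak{m}})^{\wedge \mathfrak{m}}$ as a faithfully flat extension: the map $R^{\wedge p} \to R^{\wedge \mathfrak{m}}$ arises because $R^{\wedge p}$ is Noetherian with $\mathfrak{m} R^{\wedge p}$ in its Jacobson radical, while $R_{\mathfrak{m}} \to R^{\wedge \mathfrak{m}}$ is the standard completion of a Noetherian local ring. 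By the known behavior of perfectoid purity under faithfully flat completions of Noetherian rings (cf.\ \cite{bhatt2024Perfectoid}), both $(2)$ and $(3)$ are equivalent to the perfectoid purity of $R^{\wedge \mathfrak{m}}$, and the equality of perfectoid pure thresholds $\ppt(R^{\wedge p}, p) = \ppt(R_{\mathfrak{m}}, p)$ follows from the same reduction to the common completion, since the threshold is controlled by the inverse system $R/\mathfrak{m}^n$.

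For $(3) \Rightarrow (1)$, which is the key step, I would build a graded perfectoid $R$-algebra $P$ as follows. Starting from a set of homogeneous generators of $R$ over $\mathbb{Z}_{(p)}$, one adjoins compatible systems of $p$-power roots (invoking \cref{Andre-graded} to enlarge further when necessary) to obtain a graded semiperfectoid cover, and then applies \cref{gr-perfdion} to take its graded perfectoidization $P$. Graded purity of $R \to P$ is then checked after localization at $\mathfrak{m}$: since $R$ is a graded Noetherian ring with unique graded maximal $\mathfrak{m}$, graded purity is detected by the injective hull $E_{R_{\mathfrak{m}}}(R_{\mathfrak{m}}/\mathfrak{m} R_{\mathfrak{m}})$, and the hypothesis $(3)$ — together with the compatibility of the graded perfectoidization with localization/completion at $\mathfrak{m}$ — guarantees that the localized map is pure.

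The main obstacle lies in $(3) \Rightarrow (1)$: the only available purity data is the non-graded perfectoid purity of $R_{\mathfrak{m}}$, yet one must construct a genuinely graded perfectoid $R$-algebra and verify purity globally. The delicate part is to show that the graded perfectoidization of the semiperfectoid cover, when localized at $\mathfrak{m}$ and compared with the pure perfectoid witness for $R_{\mathfrak{m}}$, still records enough of that purity to descend back to a graded pure map. Careful use of \cref{Andre-graded} to ensure that enough $p$-power roots are available, combined with the compatibility in \cref{intro:cat-equiv} between the graded and non-graded equivalences, should make this bookkeeping feasible.
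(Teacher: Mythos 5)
Your proposal correctly identifies the two main auxiliary results (\cref{gr-perfdion} and \cref{Andre-graded}) and the direction of the argument, but it misses the actual mechanism the paper uses and contains a genuine gap in the implication $(2)\Leftrightarrow(3)$.

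The missing ingredient is the \emph{graded test perfectoid} of \cref{gr-test-perfd}: a graded perfectoid $R$-algebra $R_{\infty,\graded}$ whose $p$-adic completion $R_\infty = R_{\infty,\graded}^{\wedge p}$ has the universal property that, for any perfectoid $R^{\wedge p}$-algebra $P$, after a $p$-completely faithfully flat extension of $P$ there is an $R^{\wedge p}$-algebra map $R_\infty \to P$. This is precisely what turns ``there exists \emph{some} pure map to a (graded) perfectoid'' into the verifiable assertion ``the \emph{specific} map $R \to R_{\infty,\graded}$ is pure.'' The paper then applies \cref{purity-graded-local} to that single graded homomorphism $R \to R_{\infty,\graded}$ to move purity freely between $R$, $R^{\wedge p}$, and $R_\mfrakm$, which handles (1), (2), (3), and the $\ppt$ equality all at once. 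Your $(3)\Rightarrow(1)$ sketch circles around this but stops at the admitted ``delicate part'' --- that is exactly where the test perfectoid does the work, and without it the step is unresolved.

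The $(2)\Leftrightarrow(3)$ argument via the $\mfrakm$-adic completion has a concrete gap: you assert that $R^{\wedge p} \to R^{\wedge\mfrakm}$ is faithfully flat because ``$\mfrakm R^{\wedge p}$ is in the Jacobson radical of $R^{\wedge p}$,'' but this fails in general. Having a unique \emph{graded} maximal ideal does not make $R^{\wedge p}$ local; for instance $R=\Z_{(p)}[x]$ with $\deg x=1$ has unique graded maximal ideal $\mfrakm=(p,x)$, yet $(p,x-1)$ is a maximal ideal of $R^{\wedge p}$ not containing $\mfrakm R^{\wedge p}$. Thus $R^{\wedge p}\to R^{\wedge\mfrakm}$ is flat but not faithfully flat, and the proposed reduction of both (2) and (3) to $R^{\wedge\mfrakm}$ breaks down. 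The paper avoids this entirely by routing everything through the graded test perfectoid and \cref{purity-graded-local}, which compares $\psi^{\wedge p}$ and $\psi_\mfrakm$ for the \emph{single} map $\psi\colon R\to R_{\infty,\graded}$ without ever needing $R^{\wedge p}$ to be local.

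Finally, in $(1)\Rightarrow(2)$ your Artin--Rees sketch glosses over the fact that $P$ is not finitely generated over $R$, so $P^{\wedge p}\ne P\otimes_R R^{\wedge p}$; the correct formulation passes through the bounded $p^\infty$-torsion of $P$ (a consequence of $P$ being graded perfectoid, cf.\ \cref{BoundedTorsionGradedPerfd}) and \cite[Lemma 2.13]{bhatt2024Perfectoid}, which is exactly what \cref{purity-graded-local} packages.
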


As a technical tool for proving the main theorems of this paper,  
we introduce the notion of \emph{pro-\(G\)-graded rings} (\Cref{def-pro-gra}).  
Roughly speaking, a pro-\(G\)-graded ring is a pair \((R, R_{\gr})\) consisting of a linear topological ring \(R\) and its graded part \(R_{\gr}\),  
where \(R\) is the completion of \(R_{\gr}\) with respect to its homogeneous linear topology.  
This framework allows us to treat graded and topological structures simultaneously, and serves as a bridge between gradedwise complete rings and their completions in the perfectoid setting.
Actually, the above theorems only use \(p\)-adic topology but in general we can prove more general results on \emph{pro-\(G\)-graded perfectoid rings} and \emph{pro-\(G\)-graded perfect prisms}, namely, the above theorems hold in ``adic'' situations outside \(p\)-adic ones.

\begin{remark}
Prior to this work, Gabber and Ramero conducted a similar study on graded rings and perfectoid rings in \cite[\S 16.6]{gabber2018Foundations}. In particular, the equivalences between (1)--(3) in \Cref{intro:equiv-perfd} follow from their work; see \Cref{RemarkGRGradedRing} and \Cref{RelationGR} for a detailed comparison.
\end{remark}

\subsection*{Structure of this paper}
This paper is organized as follows.  
In Section~2, we recall basic definitions and properties of perfectoid rings and perfectoidizations.  
Section~3 introduces pro-\(G\)-graded rings and the notion of gradedwise completion, providing categorical equivalences between graded topological and pro-graded settings.  
In Section~4, we develop the theory of graded perfectoid rings and prove the results above, including the graded André’s flatness lemma.  
Finally, Section~5 treats graded perfect prisms and establishes the categorical equivalence between graded perfectoid rings and graded perfect prisms.

\subsection*{Acknowledgments}
The authors would like to thank Jack J. Garzella and Joe Waldron for their valuable discussions about global variant of perfectoid theory and L\'eo Navarro Chafloque for his many insights on perfectoidization.
This work was started while we attended the conference `\(p\)-adic and Characteristic \(p\) Methods in Algebraic Geometry' at EPFL and `the Summer Research Institute in Algebraic Geometry' at Colorado State University in 2025. We are very grateful for these opportunity and their hospitality.

The first-named author was supported by JSPS KAKENHI Grant number 24KJ1085.
The second-named author was supported by JSPS KAKENHI Grant number JP24K16889.

\section{Preliminaries}

\subsection{Notation and terminologies}
\begin{enumalphp}
    \item We fix a prime number \(p\).
    \item We only consider commutative rings with unit which is often referred to as \emph{rings} simply.
    \item Let $R$ be a ring, $M$ an $R$-module, and $I$ an ideal of $R$.
    We define the \emph{$I$-torsion part of $M$} (resp., \emph{\(I^{\infty}\)-torsion part of \(M\)}) by
    \[
    M[I]\defeq\{m \in M \mid Im=0\} \quad \text{(resp., \(M[I^{\infty}] \defeq \set{m \in M}{\exists n > 0, \  I^nm = 0}\))}
    \]
    \item A topological ring has a \emph{linear topology} if there exists a fundamental system of neighborhoods of \(0\) consisting of ideals. A linear topological ring \(R\) is called \emph{complete} if any Cauchy net in \(R\) uniquely converges. If there exists the universal complete topological \(R\)-algebra, then it is called the \emph{completion} of \(R\).
    \item If a topological ring \(R\) has an ideal \(I\) such that \(I\) is finitely generated and the set \(\{I^n\}_{n \geq 0}\) consists a fundamental system of neighborhoods of \(0\), then we call \(R\) as an \emph{adic ring} and \(I\) as an \emph{ideal of definition} of \(R\).
    Furthermore, morphisms between adic rings are defined to be continuous ring homomorphisms.
    \item Denote \(G\) to be a torsion-free abelian group with identity element \(0\).
    \item A \emph{\(G\)-graded ring} is a pair of a commutative ring \(R\) and additive subgroups \(\{R_g\}_{g \in G}\) such that \(R\) is decomposed to the coproduct \(R = \bigoplus_{g \in G} R_g\) and satisfies \(R_g R_{g'} \subseteq R_{g + g'}\) for any \(g, g' \in G\). A \emph{homogeneous element} of \(R\) is an element in \(R_g\) for some \(g \in G\) and a \emph{homogeneous ideal} is an ideal of \(R\) which is generated by homogeneous elements.
    The subset $\{g \in G \mid R_g \neq 0\}$ of $G$ is called by the \emph{support of $R$} and denoted by $\Supp{R}$.
    Furthermore, for a submonoid $H$ of $G$, if the support of $R$ is contained in $H$, then we say that $R$ is \emph{$H$-graded ring}.
    \item A \emph{\(G\)-graded ring morphism} between \(G\)-graded rings \(R = \bigoplus_{g \in G} R_g\) and \(S = \bigoplus_{g \in G} S_g\) is a ring homomorphism \(\varphi \colon R \to S\) such that \(\varphi(R_g) \subseteq S_g\) for any \(g \in G\).

    \item Let $R$ be a graded ring.
    For each \(r \in \mathbb{Q}\), if $rG \subseteq G$, then we define a \(G[1/r]\)-graded ring \(R^{[r]}\) by
    \[
      (R^{[r]})_g \defeq R_{r g},
    \]
    the degree-\(r g\) part of the graded ring \(R\).
    \item To emphasize the meaning of completions, we will use several notations of them. The symbol \(\widehat{(-)}\), \((-)^{\wedge I}\) and \(\comp{I}{-}\) are only used to denote the (classical) (\(I\)-)adic completion of rings and modules.
    \item The symbol \(\dcomp{I}{-}\) is used to denote the derived (\(I\)-)completion of rings and modules. In general, this is a functor on a derived category of modules.
\end{enumalphp}

\subsection{Perfectoid rings}
We freely use the notion of perfectoid rings, prisms, and related concepts, e.g., perfectoidization, following \cites{bhatt2018Integral, bhatt2022Prismsa}.
First we start a perfectoid property of (not necessarily \(p\)-adic topological) rings:

\begin{definition}[{\cite[Definition 4.4]{takaya2025Relative}}]
    An adic ring \(R\) is called a \emph{perfectoid ring} if the underlying ring \(R\) is a perfectoid ring.
    When we say `a ring \(R\) is perfectoid', then it means \(R\) is a perfectoid ring as \(p\)-adic rings.
\end{definition}

\begin{lemma} \label{CompatibleSystemModp}
    Let \(R\) be a \(p\)-adically complete ring such that \(R/pR\) is semiperfect.
    Take an element \(f\) of \(R\).
    Then there exists an element \(g\) of \(R\) such that \(f \equiv g \bmod pR\) and \(g\) has a compatible system \(\{g^{1/p^n}\}_{n \geq 0}\) of \(p\)-power roots of \(g\) in \(R\).
\end{lemma}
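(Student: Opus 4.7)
The plan is to use the standard multiplicative (``sharp'') lift from the tilt, which exists for any $p$-adically complete ring whose mod-$p$ reduction is semiperfect.

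First, I would pass to $R/pR$ and use semiperfectness (i.e., surjectivity of Frobenius on $R/pR$) to recursively choose a compatible system $(\bar{g}_n)_{n \geq 0}$ in $R/pR$ with $\bar{g}_0 = \bar{f} := f \bmod p$ and $\bar{g}_{n+1}^p = \bar{g}_n$ for every $n \geq 0$. Then I would lift each $\bar{g}_n$ arbitrarily to an element $\tilde{g}_n \in R$. These lifts only satisfy $\tilde{g}_{n+1}^p \equiv \tilde{g}_n \pmod{p}$, not equality, so the key step is to correct them using the $p$-adic topology.

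The correction uses the classical lemma that in any commutative ring, $x \equiv y \pmod{p^k}$ implies $x^p \equiv y^p \pmod{p^{k+1}}$. Applying this iteratively to the chosen lifts shows that for each fixed $n$, the sequence $(\tilde{g}_{n+m}^{p^m})_{m \geq 0}$ is Cauchy in the $p$-adic topology of $R$; its limit
\[
g_n := \lim_{m \to \infty} \tilde{g}_{n+m}^{p^m} \in R
\]
exists by $p$-adic completeness and is independent of the choice of lifts. A direct verification shows $g_n \equiv \bar{g}_n \pmod{p}$ and $g_{n+1}^p = g_n$, since the $p$-th power of the Cauchy sequence defining $g_{n+1}$ is (a shift of) the one defining $g_n$.

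Setting $g := g_0$ yields an element with $g \equiv f \pmod{pR}$ and a compatible system of $p$-power roots $g^{1/p^n} := g_n$ in $R$, as desired. The whole construction is essentially the Teichmüller/sharp lift $\sharp : R^\flat \to R$ applied to the element of $R^\flat = \varprojlim_{\mathrm{Frob}} R/pR$ defined by $(\bar{g}_n)_{n \geq 0}$; the only nontrivial ingredient is the congruence $x \equiv y \pmod{p^k} \Rightarrow x^p \equiv y^p \pmod{p^{k+1}}$, and I would expect this Cauchy/convergence step to be the main (but standard) technical point of the argument.
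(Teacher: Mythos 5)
Your proposal is correct and follows essentially the same route as the paper: choose a sequence of lifts $(\tilde g_n)$ with $\tilde g_{n+1}^p\equiv\tilde g_n\pmod p$ (using semiperfectness of $R/pR$), observe via the standard congruence $x\equiv y\pmod{p^k}\Rightarrow x^p\equiv y^p\pmod{p^{k+1}}$ that $(\tilde g_{n+m}^{p^m})_m$ is Cauchy, and define $g_n$ as its $p$-adic limit. The paper's proof does exactly this, writing the auxiliary sequence as $(f_i)$ with $f_0=f$ and $f_{i+1}^p\equiv f_i\pmod{pR}$ and taking $g_n=\lim_{i\to\infty}f_i^{p^{i-n}}$, which is the same limit under the reindexing $i=n+m$.
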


\begin{proof}
    Since the Frobenius map on \(R/pR\) is surjective, there exists a sequence of elements \((f_0, f_1, \dots)\) of \(R\) such that
    \begin{equation*}
        f_0 = f \quad \text{and} \quad f_{i+1}^p \equiv f_i \bmod pR.
    \end{equation*}
    This condition yields \(p\deg(f_{i+1}) = \deg(f_i)\) and, for any \(n \geq 0\), this gives a Cauchy sequence \((f_i^{p^{i-n}})_{i \geq n}\) of \(R\) with respect to the \(p\)-adic topology.
    Since \(R\) is \(p\)-adically complete, this Cauchy sequence has the limit \(g_n \defeq \lim_{i \to \infty} f_i^{p^{i-n}}\) for each \(n \geq 0\).
    By construction, \(g_n\) is an element of \(R\) and the equality \(g_{n+1}^p = g_n\) holds in \(R\).
    Moreover, since the Cauchy sequence \((f_i^{p^{i}})_{i \geq 0}\) becomes a constant sequence \((\overline{f})_{i \geq 0}\) after taking modulo \(p\), we can show that \(g_0 \equiv f \bmod pR\).
    Taking \(g^{1/p^n} \defeq g_n\), we have the desired element \(g\).
\end{proof}

\begin{lemma} \label{BoundedTorsionPerfd}
    Let \(R\) be a perfectoid ring and let \(I\) be a finitely generated ideal of \(R\) containing \(p\).
    Then the following assertions hold.
    \begin{enumerate}
        \item There exists a generator \(f_1, \dots, f_r\) of \(I\) such that \(f_i\) has a compatible system of \(p\)-power roots in \(R\).
        \item  The ring \(R\) has bounded \(f_i^{\infty}\)-torsion for such choice of \(f_i\) in (1). In particular, \(R\) has bounded \(I^{\infty}\)-torsion.
        \item The \(I\)-adic completion of \(R\) is a perfectoid ring and agrees with the derived \(I\)-completion \(\dcomp{I}{R}\) of \(R\).
    \end{enumerate}
\end{lemma}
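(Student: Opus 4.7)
I would fix a pseudo-uniformizer $\varpi$ of $R$ with a compatible system of $p$-power roots and $\varpi^p \mid p$, and write $p = \varpi^p u$ with $u \in R$. Starting from any finite generating set $h_1,\dots,h_s$ of $I$, I would apply \Cref{CompatibleSystemModp} to each $h_i$ and to $u$ to obtain elements $g_i \equiv h_i \pmod{pR}$ and $u' \equiv u \pmod{pR}$, all equipped with compatible systems of $p$-power roots. The congruence $u' \equiv u \pmod{pR}$ yields $p = \varpi^p u = \varpi^p u' + p\varpi^p t$ for some $t \in R$, i.e.\ $p(1 - \varpi^p t) = \varpi^p u'$; since $R$ is $\varpi$-adically complete and $1 - \varpi^p t \equiv 1 \pmod{\varpi R}$, the factor $1 - \varpi^p t$ is a unit, so $(p) = (\varpi^p u')$. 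The element $\varpi^p u'$ inherits a compatible system of $p$-power roots as a product of such, and since $(g_1, \dots, g_s) + pR = I$, the family $\{\varpi^p u', g_1, \dots, g_s\}$ generates $I$ with the required property.

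\textbf{Part (2).} For a single $f \in R$ with a compatible system of $p$-power roots, the plan is to show $R[f] = R[f^{1/p^n}]$ for every $n$, which immediately gives $R[f^\infty] = R[f]$. The conceptual input is tilting: such $f$ is the image under the multiplicative sharp map of an element of the perfect tilt $R^\flat$, where Frobenius is an isomorphism and the analogous equality is automatic in the reduced ring $R^\flat$ (if $x y = 0$ then $(x^{1/p} y)^p = x y^p = y^{p-1}(xy) = 0$, hence $x^{1/p} y = 0$ by reducedness). Transporting through the identification $R/\varpi R \cong R^\flat/\varpi^\flat R^\flat$ yields the conclusion in $R$. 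For the ideal $I = (f_1, \dots, f_r)$, the inclusion $R[I^\infty] \subseteq R[(f_1 \cdots f_r)^\infty]$, combined with the fact that $f_1 \cdots f_r$ itself carries compatible $p$-power roots, reduces bounded $I^\infty$-torsion to the single-generator case.

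\textbf{Part (3).} Bounded $I^\infty$-torsion together with finite generation of $I$ implies by a standard comparison that the classical completion $R^{\wedge I}$ coincides with the derived $I$-completion $\dcomp{I}{R}$. For perfectoidness of $R^{\wedge I}$: since $p \in I$, each $R/I^n$ is $p$-adically nilpotent, so the limit $R^{\wedge I}$ is $p$-adically complete; the pseudo-uniformizer $\varpi$ remains one in $R^{\wedge I}$ with $\varpi^p \mid p$; and Frobenius surjectivity modulo $p$ transfers through the inverse system of surjective Frobenius maps on the discrete quotients $R/(I^n + pR)$, so that $R^{\wedge I}/p R^{\wedge I}$ is again semiperfect.

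The main obstacle will be the generator replacement in part (1): \Cref{CompatibleSystemModp} only controls generators modulo $pR$, so replacing the $p$-generator forces one to factor $p = \varpi^p u$, apply the lemma to $u$, and absorb the error via a unit from $\varpi$-adic completeness. Once this step is in place, parts (2) and (3) are largely formal, the former reducing to perfectness of the tilt and the latter to standard completion-versus-torsion facts.
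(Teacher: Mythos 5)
Your Part~(1) is correct and even supplies a detail the paper's one-line proof leaves implicit: \Cref{CompatibleSystemModp} applied to the generator $p$ itself only returns an element $\equiv 0 \bmod pR$, which cannot replace $p$ in a generating set, so one really must substitute a unit multiple of $p$ that carries $p$-power roots, exactly as you do via $p = \varpi^p u$ (alternatively one may quote \cite[Lemma~3.9]{bhatt2018Integral}, which gives $\varpi$ with $\varpi^p = p\cdot(\text{unit})$ from the start). Part~(3) is handled in the paper simply by citing \cite[Proposition~2.1.11(e)]{CS2024Purity}; your sketch has the right flavor but glosses over why $R^{\wedge I}/pR^{\wedge I}$ is again semiperfect, since identifying it with a limit of $R/(I^n+pR)$ is not a formal observation.

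Part~(2) contains two genuine gaps. First, the detour through the tilt does not produce a proof: the isomorphism $R/\varpi R \cong R^\flat/\varpi^\flat R^\flat$ does not transport an $f$-torsion statement \emph{in $R$} from an $x$-torsion statement in $R^\flat$, because $\sharp$ is only multiplicative and the torsion submodule in question lives in $R$, not $R/\varpi R$. In fact the computation you carry out in $R^\flat$ works verbatim in $R$ itself, since perfectoid rings are reduced, and this is exactly the paper's argument: from $f_i^N x = 0$ one has $(f_i^{N/p^n}x)^{p^n} = f_i^N x^{p^n} = 0$, hence $f_i^{N/p^n}x = 0$ by reducedness of $R$, and multiplying by $f_i^{1-N/p^n}$ with $p^n \geq N$ gives $f_i x = 0$. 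Second, your reduction of bounded $I^\infty$-torsion to the single-generator case via the product $f_1\cdots f_r$ is wrong: $R[I^\infty] \subseteq R[(f_1\cdots f_r)^\infty] = R[f_1\cdots f_r]$ only shows that the product $f_1\cdots f_r$ kills $R[I^\infty]$, and this does not imply that any fixed power $I^N$ kills it (compare: $ab = 0$ does not force $a^2 = 0$). The correct reduction, and the paper's, is to run the single-generator argument for each $f_i$ separately: for $x \in R[I^\infty]$ one has $f_i^{m_i}x = 0$ for some $m_i$, hence $f_i x = 0$ for every $i$, hence $Ix = 0$, so $R[I^\infty] = R[I]$.
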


\begin{proof}
    (1): This follows from \Cref{CompatibleSystemModp}.

    (2): It suffices to show that \(R\) has bounded \(f_i^\infty\)-torsion for any \(i = 1, \dots, r\) where \(f_i\) are in (1).
    If \(x \in R\) satisfies \(f_i^Nx = 0\) for some \(N > 0\), we can show that \((f_i^{N/p^n}x)^{p^n} = 0\) in \(R\) for any \(n > 0\).
    Taking large enough \(n\) and using the reducedness of perfectoid rings, we can conclude \(f_ix = 0\) in \(R\). This shows the boundedness.

    (3): This is \cite[Proposition 2.1.11 (e)]{CS2024Purity}. For the first statement, see also \cite[Corollary 4.11]{takaya2025Relative}.
\end{proof}

We record an equivalent condition for perfectoid rings following \cite[Theorem 3.50]{ishiro2025Perfectoida}, which is based on \cite[Theorem 16.3.62 and Corollary 16.3.73]{gabber2018Foundations}.

\begin{theorem}[{\cite[Theorem 3.50]{ishiro2025Perfectoida}}] \label{EquivPerfectoid}
    A ring \(R\) is a perfectoid ring if and only if there exists an element \(\varpi \in R\) satisfies the following:
    \begin{enumerate}
        \item \(R\) is \(\varpi\)-adically complete and \(p \in \varpi^pR\).
        \item The \(p\)-th power map \(R/\varpi R \xrightarrow{a \mapsto a^p} R/\varpi^p R\) is an isomorphism of rings.
        \item The multiplicative map
            \begin{align*}
                R[\varpi^\infty] & \longrightarrow R[\varpi^\infty] \\
                x & \longmapsto x^p
            \end{align*}
            is bijective.
    \end{enumerate}
\end{theorem}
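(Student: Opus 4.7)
The plan is to establish the equivalence between this reformulation and the classical Bhatt--Morrow--Scholze definition of perfectoid rings (requiring $R$ to be $p$-adically complete, Frobenius on $R/pR$ surjective, and $\ker(\theta\colon A_{\inf}(R) \to R)$ principal). Since this theorem is cited from \cite{ishiro2025Perfectoida} and ultimately from \cite{gabber2018Foundations}, I would follow that strategy, proving the two implications separately.

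For the forward direction, assume $R$ is perfectoid. Using \Cref{CompatibleSystemModp} together with the standard construction of the tilt $R^{\flat}$, I would produce an element $\varpi \in R$ with compatible $p$-power roots such that $\varpi^p$ and $p$ generate the same ideal, i.e., $\varpi^p = pu$ for a unit $u$. For such $\varpi$ the $\varpi$-adic and $p$-adic topologies coincide, giving (1). Condition (2) follows from Frobenius surjectivity on $R/pR$ (for surjectivity of $R/\varpi \to R/\varpi^p$) combined with the isomorphism $R/\varpi \cong R^{\flat}/\varpi^{\flat}$ and bijectivity of Frobenius on $R^{\flat}$ (for injectivity). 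Condition (3) exploits the reducedness of perfectoid rings: if $\varpi^N x = 0$, then choosing compatible $p$-power roots of $\varpi$ gives $(\varpi^{N/p^n} x)^{p^n} = 0$, and reducedness forces $\varpi^{N/p^n} x = 0$, yielding injectivity of the $p$-th power map on $R[\varpi^{\infty}]$; surjectivity is then immediate from (2).

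For the backward direction, assume (1), (2), (3). I would first verify that $R$ is $p$-adically complete by combining $\varpi$-adic completeness with $p \in \varpi^p R$ and (2) to show that the $\varpi$-adic and $p$-adic topologies induce equivalent completions on $R$. Next, iterating (2) with $p \in \varpi^p R$ produces Frobenius surjectivity on $R/pR$. Finally, I would construct $R^{\flat}$ as the inverse limit of Frobenius on $R/\varpi$, using (2) to iteratively lift $p$-power roots of $\varpi$ and obtain $\varpi^{\flat} \in R^{\flat}$, and then show that $\ker(\theta\colon A_{\inf}(R) \to R)$ is principally generated by a distinguished element relating $[\varpi^{\flat}]^p$ and $p$. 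Here condition (3) is the essential input, controlling the $\varpi^{\infty}$-torsion so that the principality of $\ker \theta$ is not obstructed.

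The main obstacle is the backward direction, specifically the principality of $\ker \theta$. Condition (3) acts as a reducedness-type hypothesis on $R[\varpi^{\infty}]$, which is exactly what guarantees that the tilt-theoretic construction of the perfectoid structure goes through without further assumptions such as bounded $p^{\infty}$-torsion. The detailed technical verification is the content of \cite[Corollary 16.3.73]{gabber2018Foundations}.
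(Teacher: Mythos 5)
The paper does not prove this theorem; it imports it verbatim from \cite[Theorem~3.50]{ishiro2025Perfectoida}, which in turn rests on \cite[Corollary~16.3.73]{gabber2018Foundations}. Your sketch follows essentially the strategy of those sources (comparing the three conditions with the Bhatt--Morrow--Scholze axioms), and, like the paper, ultimately defers the technical verification to the cited references, so there is no discrepancy with the paper's treatment to report.

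Two places in the sketch are glossed more than is safe. First, you invoke ``$\varpi^p = pu$ for a unit $u$''; for a general perfectoid ring this is a nontrivial normalisation (it is \cite[Lemma~3.9]{bhatt2018Integral}, not an automatic consequence of $p \in \varpi^p R$), and should be cited rather than asserted. Second, in the forward direction you claim surjectivity of $x \mapsto x^p$ on $R[\varpi^\infty]$ is ``immediate from (2).'' It is not: (2) lets you solve $z^p \equiv x \pmod{\varpi^p R}$, but you must still produce $z \in R[\varpi^\infty]$ with $z^p = x$ exactly, which needs a $\varpi$-adic approximation argument or a detour through the tilt. By contrast, your injectivity argument does go through: after establishing $R[\varpi^\infty] = R[\varpi]$ via reducedness and compatible $p$-power roots, one sees that this ideal is killed by $p$ (since $\varpi x = 0$ gives $px = u^{-1}\varpi^p x = 0$), so $(x-y)^p = x^p - y^p$ there and reducedness of $R$ finishes it.
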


\begin{remark}[{\cite[Remark 3.51]{ishiro2025Perfectoida}}] \label{ChoiceVarpi}
    Take an arbitrary element \(\pi\) of a perfectoid ring \(R\).
    If we assume that \(R\) is \(\pi\)-adically complete and \(p \in \pi^pR\), then \(R/\pi R \xrightarrow{a \mapsto a^p} R/\pi^p R\) is an isomorphism of \cite[Lemma 3.10(i)]{bhatt2018Integral}.
    Moreover, the proof of \Cref{EquivPerfectoid} in \cite{ishiro2025Perfectoida} shows that the multiplicative map \(R[\pi^{\infty}] \xrightarrow{a \mapsto a^p} R[\pi^\infty]\) becomes bijective.
\end{remark}

One of the advantages of this condition is that that condition does not require the tilt, the ring of Witt vectors, and the Fontaine's theta map of which the graded variants need more works as \Cref{pro-graded-tilt}, \Cref{pro-graded-Witt}, and \Cref{SharpMapThetaMap}.
By using this, we will be able to give some equivalent conditions on graded perfectoid rings (\Cref{graded-perfectoid-equiv}).

\begin{lemma}\label{torsion-compare}
Let \(R\) be a ring and let \(\varpi \in R\).
Consider the maps
\[
\begin{aligned}
\pi_\infty &\colon R[\varpi^\infty] \longrightarrow R[\varpi^\infty],
& a &\longmapsto a^p, \\
\pi &\colon R[\varpi^p] \longrightarrow R[\varpi^p],
& a &\longmapsto a^p.
\end{aligned}
\]
Then \(\pi_\infty\) is bijective if and only if so is \(\pi\).
Moreover, in either case, we have \(R[\varpi] = R[\varpi^\infty]\).
\end{lemma}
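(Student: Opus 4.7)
The plan is to first establish the ``moreover'' assertion in both directions --- namely, that $R[\varpi] = R[\varpi^p] = R[\varpi^\infty]$ holds under either hypothesis --- and then deduce the equivalence of bijectivity as a trivial consequence: once the ambient torsion subgroups collapse, $\pi$ and $\pi_\infty$ share the same domain and the same formula, so they are literally the same map. I first note that both maps are well-defined, since if $a\varpi^M=0$ then $a^p\varpi^M = a^{p-1}(a\varpi^M) = 0$, and in particular $\pi$ is precisely the restriction of $\pi_\infty$ to the subset $R[\varpi^p] \subseteq R[\varpi^\infty]$.

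For the implication $\pi_\infty$ bijective $\Rightarrow R[\varpi] = R[\varpi^\infty]$, I will take $a \in R[\varpi^\infty]$ with $a\varpi^N = 0$, and iteratively apply surjectivity of $\pi_\infty$ to produce $p$-power roots $b_k \in R[\varpi^\infty]$ with $b_k^{p^k} = a$. Choosing $k$ large enough that $p^k \geq N$, so that $(b_k\varpi)^{p^k} = a\varpi^{p^k} = 0$, I will apply injectivity of $\pi_\infty$ to the descending tower of elements $(b_k\varpi)^{p^j}$ (each of which lies in $R[\varpi^\infty]$) to conclude $b_k\varpi = 0$, whence $a\varpi = b_k^{p^k-1}(b_k\varpi) = 0$.

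For the implication $\pi$ bijective $\Rightarrow R[\varpi] = R[\varpi^\infty]$, I will first handle the key case $a\varpi^p = 0 \Rightarrow a\varpi = 0$: by surjectivity of $\pi$, write $a = b^p$ with $b \in R[\varpi^p]$; then $(b\varpi)^p = b^p\varpi^p = a\varpi^p = 0$ and $b\varpi \in R[\varpi^{p-1}] \subseteq R[\varpi^p]$, so injectivity of $\pi$ forces $b\varpi = 0$, hence $a\varpi = b^{p-1}(b\varpi) = 0$. For general $N$, I will induct on $N$: if $N \leq p$, multiplying by $\varpi^{p-N}$ puts $a$ in $R[\varpi^p]$ and the key case applies; if $N > p$, then $a\varpi^{N-p} \in R[\varpi^p]$ by construction, so the key case gives $a\varpi^{N-p+1} = 0$, a strict drop in $N$ by $p-1 \geq 1$ that terminates in the base regime after finitely many steps.

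The main obstacle is the key case $a\varpi^p = 0 \Rightarrow a\varpi = 0$ in the $\pi$-bijective direction, which hinges on the correct choice of auxiliary element --- namely $b\varpi$, where $b$ is a $p$-th root of $a$ --- on which to apply injectivity of $\pi$. Once this case is in hand, the remaining pieces (iterated root extraction for the $\pi_\infty$ direction, the inductive descent on $N$, and the final collapse to $\pi = \pi_\infty$) are mechanical.
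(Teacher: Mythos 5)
Your proof is correct and follows essentially the same approach as the paper: both directions hinge on the identical trick of extracting a $p$-th root inside the torsion submodule, multiplying by $\varpi$, and invoking injectivity of the $p$-power map. The only difference is organizational: in the $\pi$-bijective direction you prove $R[\varpi^p]=R[\varpi]$ directly and descend by induction on the torsion order, whereas the paper proves $R[\varpi^{p+1}]=R[\varpi^p]$ and then falls back on the $\pi_\infty$-bijective case; the two routes are mathematically interchangeable.
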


\begin{proof}
First, assume that \(\pi_\infty\) is bijective.
To show \(R[\varpi] = R[\varpi^\infty]\), take \(x \in R\) such that \(\varpi^{p^n} x = 0\) for some \(n \ge 1\).
By the surjectivity of \(\pi_\infty\), there exists \(y \in R[\varpi^\infty]\) with \(x = y^{p^n}\).
Then
\[
(\varpi y)^{p^n} = \varpi^{p^n} x = 0.
\]
By the injectivity of \(\pi_\infty\), it follows that \(\varpi y = 0\).
Hence \(\varpi x = \varpi y^{p^n} = 0\), as desired.
In particular, \(R[\varpi^p] = R[\varpi^\infty]\), and \(\pi\) is bijective.

\smallskip
Conversely, assume that \(\pi\) is bijective.
It suffices to show \(R[\varpi^p] = R[\varpi^\infty]\).
We claim that if \(\varpi^{p+1} x = 0\), then \(\varpi^p x = 0\) for any \(x \in R\).
By surjectivity of \(\pi\), there exists \(y \in R[\varpi^p]\) such that \(y^p = \varpi x\).
Then
\[
(\varpi y)^p = \varpi^{p+1} x = 0.
\]
By the injectivity of \(\pi\), we have \(\varpi y = 0\),
and hence \(\varpi^2 x = \varpi y^p = 0\).
Since \(2 \le p\), this implies \(\varpi^p x = 0\), as required.

\smallskip
Therefore, in both cases, \(\pi_\infty\) is bijective, and we obtain
\(R[\varpi] = R[\varpi^\infty]\), completing the proof.
\end{proof}

The following decomposition property of the perfectoidization is obtained in the on-going work of the first-named author with L\'eo Navarro Chafloque.
It reduces the computation of perfectoidizations to the one of the \(p\)-torsion-free part and the characteristic part.
The proof is not given in this paper, but for later use, the claim is stated here.

\begin{proposition} \label{DecompositionPerfectoidization}
    Let $R$ be a ring and suppose that there exists an initial map to a perfectoid ring $R\to R_{\perfd}$. Denote $R' \defeq R/R[p^{\infty}]$.
    Then the universal perfectoid \(R'\)-algebra \((R')_{\perfd}\) exists and the natural map
    \begin{equation*}
        R_{\perfd}\to (R')_{\perfd}\times_{((R')_{\perfd}/p(R')_{\perfd})_{\perf}}(R/pR)_{\perf}
    \end{equation*}
    is an isomorphism, where the symbol \((-)_{\perf}\) denotes the perfection of \(\setF_p\)-algebras.
\end{proposition}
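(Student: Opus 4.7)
The plan is to first establish the existence of $(R')_{\perfd}$, then construct a comparison map to the claimed fiber product, and finally identify the two by matching universal properties.

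For the existence of $(R')_{\perfd}$, note that an $R'$-algebra is precisely an $R$-algebra on which the image of $R[p^\infty]$ acts by zero. Thus if $J \subseteq R_{\perfd}$ denotes the ideal generated by the image of $R[p^\infty]$, the quotient $R_{\perfd}/J$ is semiperfectoid (a quotient of the perfectoid ring $R_{\perfd}$), and its perfectoidization in the sense of Bhatt--Scholze \cite{bhatt2022Prismsa} exists and satisfies the universal property of $(R')_{\perfd}$, namely it is initial among perfectoid rings receiving a map from $R'$.

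Next, universality of $R_{\perfd}$ yields canonical maps $R_{\perfd} \to (R')_{\perfd}$ (via $R \to R' \to (R')_{\perfd}$) and $R_{\perfd} \to (R/pR)_{\perf}$ (via $R \to R/pR \to (R/pR)_{\perf}$, using that a perfect $\bF_p$-algebra is perfectoid). By uniqueness of factorizations through $R_{\perfd}$, their compositions down to $((R')_{\perfd}/p)_{\perf}$ coincide, producing the comparison map
\[
\Phi \colon R_{\perfd} \longrightarrow (R')_{\perfd} \times_{((R')_{\perfd}/p)_{\perf}} (R/pR)_{\perf}.
\]

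To prove $\Phi$ is an isomorphism, I would show the target $T$ is itself a perfectoid ring satisfying the universal property of $R_{\perfd}$, after which uniqueness of the perfectoidization forces $\Phi$ to be invertible. Perfectoidness of $T$ follows from \Cref{EquivPerfectoid} applied to a uniformizer $\varpi$ arising from $(R')_{\perfd}$: the two factors are perfectoid, the base $((R')_{\perfd}/p)_{\perf}$ is a perfect $\bF_p$-algebra, and since the structural maps to the base are killed by a power of $\varpi$, $\varpi$-adic completeness, the Frobenius isomorphism $T/\varpi \xrightarrow{\sim} T/\varpi^p$, and the torsion bijection of \Cref{torsion-compare} all descend to the fiber product from $(R')_{\perfd}$. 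For the universal property, given any perfectoid $R$-algebra $B$, the canonical decomposition $B \to B/B[p^\infty]$ and $B \to (B/p)_{\perf}$ produces compatible maps out of $(R')_{\perfd}$ and $(R/pR)_{\perf}$ via the universal properties established above, which glue over $((R')_{\perfd}/p)_{\perf}$ into a unique map $T \to B$ extending $R \to B$.

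The main technical obstacle I anticipate is the clean verification that $T$ is perfectoid and that the two components of a map from an arbitrary perfectoid $B$ genuinely agree over the common base. The essential simplification is that both $((R')_{\perfd}/p)_{\perf}$ and $(R/pR)_{\perf}$ are perfect $\bF_p$-algebras, so the mod-$p$ gluing is largely formal via the functoriality of perfection; the $p$-torsion-free content of $R_{\perfd}$ is then entirely concentrated in the $(R')_{\perfd}$ factor, as required.
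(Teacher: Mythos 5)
The paper itself gives no proof of this proposition; the text explicitly defers it to ongoing work of the first author with Navarro Chafloque, stating only the result for later use. So there is no paper proof to compare against, and I can only assess your argument on its own terms.

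Your overall strategy (establish existence of $(R')_{\perfd}$, build the comparison map $\Phi$, then show the fiber product $T$ is perfectoid and satisfies the correct universal property) is a reasonable skeleton, and your identification $(R')_{\perfd} \simeq (R_{\perfd}/J)_{\perfd}$ is the right existence argument (modulo the minor point that one should $p$-complete $R_{\perfd}/J$ before calling it semiperfectoid). However, the crucial third step has a genuine gap that makes the argument circular as written. To verify the universal property of $T$ you need, for an arbitrary perfectoid $R$-algebra $B$, a canonical map $T\to B$. A map \emph{out of} a fiber product has no formal description; your proposal to ``glue'' the maps $(R')_{\perfd}\to B/B[p^\infty]$ and $(R/pR)_{\perf}\to (B/pB)_{\perf}$ lands in $B/B[p^\infty]\times_{(\cdots)}(B/pB)_{\perf}$, not in $B$. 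To pass from this fiber product to $B$ you need precisely the assertion that every perfectoid $B$ decomposes as $B\simeq B/B[p^\infty]\times_{((B/B[p^\infty])/p)_{\perf}}(B/pB)_{\perf}$ --- but this is exactly the special case $R=B$ of the proposition you are proving. That special case is a genuine structural fact about perfectoid rings (it follows from $B[p^\infty]\cap pB = 0$, reducedness of $B$, and the identification of the direct perfection of a semiperfect ring with its reduced quotient), and it needs to be isolated and proved before your universal-property argument can close.

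Two further points. First, the verification that $T$ is perfectoid is too compressed: you must first exhibit a usable uniformizer inside $T$. One can do this --- since $\varpi^p \equiv 0 \pmod p$ and $((R')_{\perfd}/p)_{\perf}$ is reduced, the image of $\varpi$ there is $0$, so $(\varpi,0)\in T$ --- but you would then need to check each of the three conditions of \Cref{EquivPerfectoid} for the fiber product, none of which is purely formal. Second, if you instead take the cleaner route of proving the decomposition lemma for a perfectoid $B$ and then applying it to $B=R_{\perfd}$, you must additionally identify $R_{\perfd}/R_{\perfd}[p^\infty]$ with $(R')_{\perfd}$ (which uses that the perfectoidization of a $p$-torsion-free semiperfectoid ring is $p$-torsion-free, cf.\ the use of \cite[Lemma A.2]{ma2022Analogue} later in the paper) and $(R_{\perfd}/p)_{\perf}$ with $(R/pR)_{\perf}$ (a short universal-property argument). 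These reductions are not in your sketch, so even granting your perfectoidness claim the proof is not complete.
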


\subsection{Graded homomorphisms}

\begin{lemma} \label{EquivPureMap}
    Let \(R\) be a \(G\)-graded ring.
    Take a graded \(R\)-module homomorphism \(\varphi \colon M \to N\).
    Then the following are equivalent.
    \begin{enumerate}
        \item This \(\varphi\) is pure as a (non-graded) \(R\)-module homomorphism.
        \item The base change \(M \otimes_R L \to N \otimes_R L\) is injective for all graded \(R\)-module \(L\).
    \end{enumerate}
    Therefore, we do not need to distinguish these two purity of graded morphisms.
\end{lemma}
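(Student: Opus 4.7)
The direction $(1) \Rightarrow (2)$ is immediate: since every graded $R$-module is in particular an $R$-module, the injectivity of $\varphi \otimes_R L$ against all $R$-modules $L$ specializes to graded ones.

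For the converse $(2) \Rightarrow (1)$, my plan is to invoke the elementary (Cohn-type) characterization of purity: $\varphi$ is pure if and only if every finite system of $R$-linear equations $\sum_j a_{ij} x_j = m_i$, with $a_{ij} \in R$ and $m_i \in M$, which is solvable in $N$ after applying $\varphi$ to the right-hand side is also solvable in $M$. Given such a system together with a solution $n_j \in N$, I will decompose every element into homogeneous components: $a_{ij} = \sum_h a_{ij, h}$ with $a_{ij, h} \in R_h$, $m_i = \sum_d m_{i, d}$ with $m_{i, d} \in M_d$, and $n_j = \sum_e n_{j, e}$ with $n_{j, e} \in N_e$. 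Because $\varphi$ is a graded homomorphism, the equation $\sum_j a_{ij} n_j = \varphi(m_i)$ splits in each fixed degree $d$ into the homogeneous equation
\[
    \sum_{j,\; h + e = d} a_{ij, h}\, n_{j, e} \;=\; \varphi(m_{i, d}),
\]
yielding a finite graded homogeneous system in unknowns $x_{j, e}$ of prescribed degree $e$ with homogeneous right-hand sides $m_{i, d}$, solved in $N$ by $x_{j, e} := n_{j, e}$.

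Next, I will appeal to the graded analogue of Cohn's criterion, which asserts that condition (2) is equivalent to the solvability in $M$ of every such graded homogeneous system that is solvable in $N$. This equivalence is proved along the same lines as the classical Cohn criterion: condition (2) reduces by a filtered colimit argument to the injectivity of $\varphi \otimes_R P$ for $P$ a finitely presented graded $R$-module, each such $P$ is the cokernel of a graded map between finite graded free modules, and the injectivity condition translates directly into the graded solvability statement. Applying this to the graded system above yields homogeneous solutions $m'_{j, e} \in M_e$, and setting $m'_j := \sum_e m'_{j, e}$ produces a solution to the original ungraded system in $M$, establishing the purity of $\varphi$.

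The main obstacle will be verifying this graded version of Cohn's criterion: one must carefully track the bookkeeping of $G$-degrees appearing in finite graded presentations and confirm that the classical equivalence between tensor-product injectivity and solvability of finite linear systems carries over verbatim to the graded setting. Since any finite system involves only finitely many degrees of $G$ and since $G$ is torsion-free abelian, finitely presented graded test modules genuinely suffice, and this reduction should proceed by a direct adaptation of the standard arguments.
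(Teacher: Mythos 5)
Your argument is correct but takes a substantially longer route than the paper's. Where you reduce to a graded version of Cohn's linear-systems characterization of purity and then reassemble a solution by summing homogeneous components, the paper goes the other way: starting from an arbitrary (ungraded) test module $L$, it manufactures the graded test module $L[T^G] \defeq \bigoplus_{g\in G} L\,T^g$, with $R$-action $r\cdot(l\,T^g) = (rl)\,T^{g+h}$ for $r \in R_h$. One checks that $m \otimes l \mapsto m \otimes l\,T^{-\deg m}$ (for homogeneous $m$) gives a well-defined $R$-linear split injection $M \otimes_R L \hookrightarrow M \otimes_R L[T^G]$ with retraction $m \otimes l\,T^h \mapsto m \otimes l$, and that both maps commute with $\varphi$. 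Injectivity of $\varphi \otimes L[T^G]$ --- granted by (2) since $L[T^G]$ is graded --- then forces injectivity of $\varphi \otimes L$. This yields a short proof with no need for the filtered-colimit reduction to finitely presented graded modules, shifted-free presentations, or the degree bookkeeping of a graded Cohn criterion. Your approach is sound --- the graded Cohn criterion you invoke does hold by the adaptation you sketch, and your decomposition-and-reassembly step is valid --- but the suspension trick $L \rightsquigarrow L[T^G]$ sidesteps that entire machinery; in particular, the torsion-freeness of $G$ plays no essential role in either argument.
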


\begin{proof}
    \((1) \Rightarrow (2)\): This is clear.

    \((2) \Rightarrow (1)\): Let \(L\) be a (non-graded) \(R\)-module.
    Take the \(G\)-graded abelian group
    \begin{equation*}
        L[T^G] \defeq \bigoplus_{g \in G} L T^g,
    \end{equation*}
    where \(T\) is a variable with the natural \(G\)-graded \(R\)-module structure.
    By assumption, the base change
    \begin{equation*}
        M \otimes_R L[T^G] \to N \otimes_R L[T^G]
    \end{equation*}
    is injective.
    Using the splitting injection \(M \otimes_R L \hookrightarrow M \otimes_R L[T^G]\), we can conclude that \(M \otimes_R L \to N \otimes_R L\) is injective.
\end{proof}

\begin{proposition}\label{graded-hom-hom}
Let $R$ be a $G$-graded ring and $M$ be a finitely presented graded $R$-module.
Then for every graded $R$-module $N$, the natural homomorphism
\[
\grHom_R(M,N) \to \Hom_R(M,N)
\]
is an isomorphism as \(R\)-modules, where \(\grHom_R(M, N)\) is the \(G\)-graded module of \(R\)-module homomorphisms used in \cite{goto1978Graded} and \citeSta{00JL}.
\end{proposition}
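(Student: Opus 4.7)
My plan is to reduce the statement to the case where $M$ is a finite direct sum of shifted copies of $R$, by choosing a graded finite presentation of $M$ and applying left exactness of both Hom functors together with the five lemma.

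First, I would verify the claim for $M = R(g)$, the shifted free graded module of rank one. Both $\grHom_R(R(g), N)$ and $\Hom_R(R(g), N)$ identify canonically with $N$ via evaluation at the canonical generator, and under these identifications the natural map in the statement is literally the identity of $N$. Since both functors send finite direct sums in the first variable to direct products, which coincide with direct sums for finitely many indices, the claim extends immediately to any $M$ of the form $\bigoplus_{i=1}^n R(g_i)$.

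Next, since $M$ is finitely presented as a graded $R$-module, I choose a graded presentation
\[
F_1 \to F_0 \to M \to 0
\]
with $F_0$ and $F_1$ finite direct sums of shifted copies of $R$. Applying the contravariant left exact functors $\grHom_R(-,N)$ and $\Hom_R(-,N)$ to this presentation and taking the natural transformation between them produces a commutative diagram with exact rows, whose two rightmost vertical arrows are isomorphisms by the first step; the five lemma then forces the leftmost vertical arrow, which is the map in the statement, to be an isomorphism.

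The one mildly technical point is the left exactness of $\grHom_R(-,N)$: its degree-$g$ graded piece coincides with the Hom in the abelian category of graded $R$-modules with degree-zero morphisms applied to a suitable shift of $N$, which is left exact, and direct sums over $g \in G$ preserve exactness. I do not expect any serious obstacle beyond this bookkeeping, since the argument is entirely formal once the free case is handled and left exactness of $\grHom_R$ is in place.
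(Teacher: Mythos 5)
Your proposal is correct and is essentially the same argument as the paper's: both choose a finite graded presentation of $M$ by shifted free modules, apply the two contravariant Hom functors to obtain a commutative diagram with left-exact rows, and then deduce that the leftmost vertical map is an isomorphism because the other two are. The paper phrases the final step as a direct diagram chase rather than invoking the five lemma by name, and it notes up front that $\varphi_M$ is injective, but these are cosmetic differences.
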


\begin{proof}
By definition, we have the natural injection $\varphi_M \colon \grHom_R(M,N) \to \Hom_R(M,N)$.
Taking a presentation of $M$ as a graded $R$-module
\[
\bigoplus_{i=1}^{s} R(m_i) \to \bigoplus_{j=1}^t R(n_j) \to M \to 0,
\]
we obtain the commutative diagram in which each horizontal sequence is exact
\[
\begin{tikzcd}
    0 \arrow[r] & \grHom_R(M,N) \arrow[r] \arrow[d,"\varphi_M"] & \grHom_R(\bigoplus_{j=1}^t R(n_j),N) \arrow[r] \arrow[d,"\varphi_1"] & \grHom_R(\bigoplus_{i=1}^s R(m_i),N) \arrow[d,"\varphi_2"] \\
    0 \arrow[r] & \Hom_R(M,N) \arrow[r]  & \Hom_R(\bigoplus_{j=1}^t R(n_j),N) \arrow[r]  & \Hom_R(\bigoplus_{i=1}^s R(m_i),N). 
\end{tikzcd}
\]
Since $\varphi_1$ and $\varphi_2$ are isomorphisms, so is $\varphi_M$, as desired.
\end{proof}

\begin{lemma}\label{purity-graded-local}
Let $R$ be a $G$-graded Noetherian ring.
Assume that the graded Jacobson radical\footnote{For any \(G\)-graded ring \(R\), the \emph{graded Jacobson radical} of \(R\) is the intersection of all graded maximal ideal of \(R\).} $R$ contains \(p\).
Let $M$ be a \(G\)-graded $R$-module which has bounded \(p^{\infty}\)-torsion.
Take a graded $R$-module homomorphism $\psi \colon R \to M$.
Then the following are equivalent.
\begin{enumerate}
    \item This \(\psi\) is pure.
    \item The \(p\)-adic completion $\psi^{\wedge p} \colon R^{\wedge p} \to M^{\wedge p}$ is pure
    \item The induced homomorphism \(\psi'_{\mfrakm} \colon R_\mfrakm \longrightarrow (M_{\mfrakm})^{\wedge p}\) is pure for any graded maximal ideal \(\mfrakm\) of \(R\).
    \item The localization \(\psi_{\mfrakm} \colon R_{\mfrakm} \to M_{\mfrakm}\) is pure for any graded maximal ideal \(\mfrakm\) of \(R\).
\end{enumerate}
\end{lemma}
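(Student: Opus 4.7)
Throughout, by Lemma \ref{EquivPureMap} graded purity and ordinary purity coincide, so I work with ordinary purity. The plan is to establish $(1) \Leftrightarrow (4)$, $(3) \Leftrightarrow (4)$, and $(1) \Leftrightarrow (2)$; together these give all equivalences. The three arguments rely respectively on a graded local-global principle, faithfully flat $p$-completion at each graded local ring, and the Noetherian flatness of the global $p$-completion.

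For $(1) \Leftrightarrow (4)$, purity localizes in general, so $(1) \Rightarrow (4)$. For $(4) \Rightarrow (1)$, test purity of $\psi$ against an arbitrary graded $R$-module $L$ (permitted by Lemma \ref{EquivPureMap}). If $K := \ker(\psi \otimes_R L) \neq 0$, then $K$ is a nonzero graded submodule of $L$; by graded Noetherianity it has a graded associated prime, which sits inside some graded maximal ideal $\mathfrak{m}$, so $K_\mathfrak{m} \neq 0$, contradicting the purity of $\psi_\mathfrak{m}$.

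For $(3) \Leftrightarrow (4)$, fix a graded maximal ideal $\mathfrak{m}$: then $R_\mathfrak{m}$ is Noetherian local with $p \in \mathfrak{m} R_\mathfrak{m}$, so $(R_\mathfrak{m})^{\wedge p}$ is faithfully flat over $R_\mathfrak{m}$. The direction $(3) \Rightarrow (4)$ follows since $\psi'_\mathfrak{m}$ factors as $R_\mathfrak{m} \xrightarrow{\psi_\mathfrak{m}} M_\mathfrak{m} \to (M_\mathfrak{m})^{\wedge p}$, and purity of a composition forces purity of the first factor. For $(4) \Rightarrow (3)$, reduce purity testing to finitely generated $R_\mathfrak{m}$-modules $L$; then $L \otimes_{R_\mathfrak{m}} (R_\mathfrak{m})^{\wedge p} \cong L^{\wedge p}$, and using bounded $p^\infty$-torsion of $M_\mathfrak{m}$ (inherited from $M$), I identify $M_\mathfrak{m} \otimes_{R_\mathfrak{m}} L \otimes_{R_\mathfrak{m}} (R_\mathfrak{m})^{\wedge p}$ with $(M_\mathfrak{m})^{\wedge p} \otimes_{(R_\mathfrak{m})^{\wedge p}} L^{\wedge p}$. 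Combined with the Krull-intersection injection $L \hookrightarrow L^{\wedge p}$ and faithfully flat base change of the injection $L \hookrightarrow M_\mathfrak{m} \otimes_{R_\mathfrak{m}} L$ coming from $(4)$, this yields $(3)$.

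For $(1) \Leftrightarrow (2)$, use that $R \to R^{\wedge p}$ is flat and that every maximal ideal of $R^{\wedge p}$ has the form $\mathfrak{m} R^{\wedge p}$ for some (possibly non-graded) maximal ideal $\mathfrak{m}$ of $R$ containing $p$. For $(1) \Rightarrow (2)$, $(1)$ gives $\psi_\mathfrak{m}$ pure at any such $\mathfrak{m}$, and the faithfully flat base change $R_\mathfrak{m} \to (R^{\wedge p})_{\mathfrak{m} R^{\wedge p}}$ transports this to purity at $\mathfrak{m} R^{\wedge p}$ in $R^{\wedge p}$; checking at all maximals of $R^{\wedge p}$ yields $(2)$. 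For $(2) \Rightarrow (1)$, restrict to graded maximal ideals of $R$ to deduce $(3)$, then invoke the earlier equivalences. The main obstacle is the implication $(4) \Rightarrow (3)$, where one must carefully match $p$-adic completions with tensor products for modules that need not be finitely generated; the bounded $p^\infty$-torsion hypothesis is essential to ensure that classical and derived completions agree and that base change interacts correctly with completion.
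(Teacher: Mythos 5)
Your overall plan is reasonable, and your argument for $(4)\Rightarrow(1)$ via graded associated primes is a valid (and arguably more elementary) alternative to the paper's proof, which instead reduces to $M$ finitely generated and tests splitting via the evaluation map $e=\ev_{\psi(1)}\colon\grHom_R(M,R)\to R$ combined with \cref{graded-hom-hom}. Your directions $(1)\Rightarrow(4)$ and $(3)\Rightarrow(4)$ are also fine. However, the implications that involve $p$-adic completion have genuine gaps.

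In your step $(4)\Rightarrow(3)$, the claimed identification
\[
M_{\mfrakm}\otimes_{R_{\mfrakm}} L\otimes_{R_{\mfrakm}}(R_{\mfrakm})^{\wedge p}
\;\cong\;
(M_{\mfrakm})^{\wedge p}\otimes_{(R_{\mfrakm})^{\wedge p}} L^{\wedge p}
\]
is equivalent to asserting that the natural map $M_{\mfrakm}\otimes_{R_{\mfrakm}}(R_{\mfrakm})^{\wedge p}\to(M_{\mfrakm})^{\wedge p}$ becomes an isomorphism after tensoring with $L^{\wedge p}$; taking $L=R_{\mfrakm}$ already shows this is false in general. Indeed, if $M_{\mfrakm}$ is a free module of infinite rank (so $p$-torsion-free, hence certainly bounded $p^\infty$-torsion), then $M_{\mfrakm}\otimes(R_{\mfrakm})^{\wedge p}$ is a direct sum while $(M_{\mfrakm})^{\wedge p}$ is the module of null sequences, which is strictly larger. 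Bounded $p^{\infty}$-torsion ensures that \emph{classical and derived} completion agree, but it does not make completion commute with flat base change for non-finitely-generated modules. The same issue invalidates both directions of your $(1)\Leftrightarrow(2)$: you implicitly use $M\otimes_R R^{\wedge p}\cong M^{\wedge p}$ (or its localized version) to pass between purity of $\psi$ and purity of $\psi^{\wedge p}$.

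The paper sidesteps exactly this difficulty by invoking \cite[Lemma~2.13]{bhatt2024Perfectoid}, which asserts directly that purity of an $R$-module map passes to its (derived) $p$-adic completion when the relevant torsion is bounded; this is a nontrivial statement that does not reduce to a tensor-product identity. Concretely, the paper proves $(1)\Rightarrow(2)$ by applying that lemma once, $(2)\Rightarrow(3)$ by localizing, applying the lemma a second time to identify the $p$-adic completion of $(\psi^{\wedge p})_{\mfrakm}$ with $\psi_{\mfrakm}^{\wedge p}$, and then composing with the pure (faithfully flat Noetherian local) map $R_{\mfrakm}\to R_{\mfrakm}^{\wedge p}$. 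To fix your proof you would need either to cite that lemma or to supply an argument establishing that purity is preserved by $p$-adic completion for modules of bounded $p^{\infty}$-torsion; the hand-crafted tensor manipulation as written does not achieve this.
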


\begin{proof}
    \((1) \Rightarrow (2)\): The \(p\)-adic completion \(R^{\wedge p} \to M^{\wedge p}\) now coincides with the derived \(p\)-completion since \(R\) and \(M\) have bounded \(p^\infty\)-torsion.
    Then the purity of \(\psi\) implies \(\psi^{\wedge p}\) is a pure homomorphism by \cite[Lemma 2.13]{bhatt2024Perfectoid}.

    \((2) \Rightarrow (3)\): Take any graded maximal ideal \(\mfrakm\) of \(R\).
    The purity of \(\psi^{\wedge p}\) implies the one of the localization
    \begin{equation*}
        (\psi^{\wedge p})_{\mfrakm} \colon (R^{\wedge p})_{\mfrakm} \to (M^{\wedge p})_{\mfrakm}.
    \end{equation*}
    Using \cite[Lemma 2.13]{bhatt2024Perfectoid} again, we can deduce that its \(p\)-adic completion is pure.
    This is the same as the purity of
    \begin{equation*}
        \psi_{\mfrakm}^{\wedge p} \colon R_{\mfrakm}^{\wedge p} \to M_{\mfrakm}^{\wedge p}.
    \end{equation*}
    Since \(R_{\mfrakm} \to R_{\mfrakm}^{\wedge p}\) is pure, the composition \(\psi'_{\mfrakm}\) is pure.

    \((3) \Rightarrow (4)\): Since \(\psi'_{\mfrakm}\) factors through \(\psi_{\mfrakm}\), the purity of \(\psi'_{\mfrakm}\) implies that of \(\psi_{\mfrakm}\) for any graded maximal ideal $\mfrakm$ of $R$.

    \((4) \Rightarrow (1)\): Writing $M$ as a filtered colimit of finitely generated graded $R$-submodules, we may assume that $M$ is finitely generated.

    We want to show that \(\psi\) splits.
    This is equivalent to saying that the evaluation morphism along the homogeneous element \(\psi(1)\) from the graded \(R\)-module of \(R\)-module homomorphisms
    \[
    e = \ev_{\psi(1)} \colon \grHom_R(M,R) \to R
    \]
    is surjective, where $\grHom_R(M,R)$ has a natural \(G\)-graded \(R\)-module structure as in \citeSta{00JL}.
    Assume the converse. Then the image \(\Image(e)\), which is a graded submodule of \(R\), is contained in a graded maximal ideal \(\mfrakm\) of \(R\).
    Since $\psi_{\mfrakm}$ is assumed to be pure and then $\psi_{\mfrakm}$ splits.
    Then the localization \(e_{\mfrakm}\) is surjective by \cref{graded-hom-hom}, which implies that \(\Image(e_{\mfrakm}) = \Image(e)_{\mfrakm} = R_{\mfrakm}\).
    However, this contradicts the containment \(\Image(e)_{\mfrakm} \subseteq \mfrakm R_{\mfrakm} \subsetneq R_{\mfrakm}\).
    So \(e\) is surjective.
    \qedhere

    
\end{proof}

\section{Pro-graded rings}

\subsection{Graded topological rings}

\begin{definition} \label{DefTopGraded}
We first fix a notion of graded topological rings and its related concepts:
\begin{itemize}
    \item In this article, a \emph{$G$-graded topological ring} $R$ is a $G$-graded ring $R=\bigoplus_{g\in G} R_g$ equipped with a homogeneous linear topology; that is, there exists a fundamental system of neighborhoods of $0$ consisting of homogeneous ideals. In particular, the set of all open homogeneous ideals is such a system, called \emph{the system of open homogeneous ideals} and denoted by $\mathcal{B}_{R}$.
    \item Let $R$ be a $G$-graded topological ring. If $R$ has a finitely generated homogeneous ideal $J$ such that $\{J^n\}_{n\ge 1}$ is a fundamental system of neighborhoods of $0$, then we call $R$ a \emph{$G$-graded adic ring}, and such $J$ a \emph{homogeneous ideal of definition} of $R$.
    \item We say that a $G$-graded topological ring $R$ is \emph{gradedwise complete} if $R_g$ is complete with respect to the topology induced from the inclusion $R_g\to R$ for every $g\in G$.
    \item A morphism of $G$-graded topological rings is a continuous $G$-graded ring homomorphism $\varphi\colon R\to S$.
\end{itemize}
\end{definition}

\begin{remark}\label{remk:intersection}
Let $R$ be a $G$-graded topological ring. If $I,J\in \mathcal{B}_R$, then $I\cap J$ is an open homogeneous ideal, hence $I\cap J\in \mathcal{B}_R$. Therefore $\mathcal{B}_R$ is closed under finite intersections and forms a cofiltered set under inclusion. For each $g\in G$, the completion of $R_g$ with respect to the induced topology is $\varprojlim_{I\in \mathcal{B}_R} R_g/(I\cap R_g)$, and the completion of $R$ is $\varprojlim_{I\in \mathcal{B}_R} R/I$.
\end{remark}

\begin{construction} \label{GradedwiseCompletion}
The inclusion functor from the category of gradedwise complete $G$-graded topological rings to the category of $G$-graded topological rings admits a left adjoint, called \emph{gradedwise completion}. Explicitly, for a $G$-graded topological ring $R$ we set
\[
R^{\grwedge} \defeq \grcomp{I}{R} \defeq \bigoplus_{g\in G}\ \varprojlim_{I\in \mathcal{B}_R} R_g/(I\cap R_g).
\]
Note that this need not coincide with the (usual) completion $\varprojlim_{I\in \mathcal{B}_R} R/I$ in the category of topological rings.
If there is no confusion of what the topology is, we just use the symbol \((-)^{\grwedge}\). However, when we will take some gradedwise completions with respect to different topologies, the symbol \(\grcomp{I}{-}\) will be used.
\end{construction}

For gradedwise completeness, the topological Nakayama lemma also holds (see also \Cref{SurjectiveProGraded}):

\begin{proposition}\label{graded-top-nakayama}
Let $\varphi\colon S\to R$ be a morphism of $G$-graded topological rings which is an open map. Assume $S$ is gradedwise complete and $R$ is separated. If for every $I\in \mathcal{B}_S$ the induced map $\varphi_I\colon S/I\twoheadrightarrow R/IR$ is surjective, then $\varphi$ is surjective.
\end{proposition}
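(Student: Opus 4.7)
The plan is to reduce to the homogeneous case and then construct a preimage of each $r \in R_g$ as the limit of a Cauchy net in the complete group $S_g$, built by iterative refinement indexed by $\mathcal{B}_S$.

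First, I would observe that openness of $\varphi$ together with continuity makes $\{IR\}_{I \in \mathcal{B}_S}$ cofinal in $\mathcal{B}_R$: openness forces each $IR$ to lie in $\mathcal{B}_R$, while continuity gives $\varphi^{-1}(J) \cdot R \subseteq J$ for any $J \in \mathcal{B}_R$. Consequently $\{(IR)_g\}_{I \in \mathcal{B}_S}$ is a neighborhood base of $0$ in $R_g$, and separatedness of $R$ yields $\bigcap_{I \in \mathcal{B}_S} (IR)_g = 0$.

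The heart of the argument is a refinement step: given $s \in S_g$ together with $I, J \in \mathcal{B}_S$ satisfying $J \subseteq I$ and $\varphi(s) - r \in (IR)_g$, I would produce $s' \in S_g$ with $\varphi(s') - r \in (JR)_g$ and $s - s' \in I \cap S_g$. Since $IR$ is homogeneous, I write $\varphi(s) - r = \sum_k \varphi(a_k)\, r_k$ as a finite sum with $a_k \in I$ and $r_k \in R$ both homogeneous of complementary degrees summing to $g$. The graded surjectivity of $\varphi_J$ then lifts each $r_k$ to a homogeneous $t_k \in S$ with $\varphi(t_k) \equiv r_k \pmod{JR}$; setting $s' \defeq s - \sum_k a_k t_k$, one finds $s - s' \in I \cap S_g$ together with
\[
\varphi(s') - r \;=\; -\sum_k \varphi(a_k)\bigl(\varphi(t_k) - r_k\bigr) \;\in\; IJ \cdot R \;\subseteq\; JR,
\]
so $\varphi(s') - r \in (JR)_g$ by degree.

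Iterating this refinement along a decreasing cofinal chain $I_0 \supseteq I_1 \supseteq \cdots$ in $\mathcal{B}_S$ produces a Cauchy sequence $(s_n)$ in $S_g$ with $s_{n+1} - s_n \in I_n \cap S_g$; its limit $s$, which exists by gradedwise completeness of $S$, then satisfies $\varphi(s) = r$ by continuity and by $\bigcap_n (I_n R)_g = 0$. When $\mathcal{B}_S$ admits no countable cofinal subset, I would run the same procedure as a transfinite induction along a well-ordered cofinal subset of $\mathcal{B}_S$, taking limits in $S_g$ at limit ordinals. The main technical subtlety I anticipate is this transfinite compatibility step, which should go through because the refinement can always be performed relative to whichever open ideal comes next while the partial nets produced up to each stage remain Cauchy in $S_g$.
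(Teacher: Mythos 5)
Your refinement step is the right idea and is in fact more careful than the paper's own proof, which simply says ``choose $a_{I,g}\in S$ with $\varphi_I(a_{I,g})=b_{I,g}$'' and then asserts that these choices assemble into an element of $\varprojlim_{I} S_g/(S_g\cap I)$. That assertion is not automatic: for $J\subseteq I$, two arbitrary lifts $a_{J,g}$ and $a_{I,g}$ of $b_{I,g}$ need not be congruent modulo $I\cap S_g$, since the kernel of $\varphi_I$ in degree $g$ is $(\varphi^{-1}(IR)\cap S_g)/(I\cap S_g)$, which is generally nonzero. Your refinement lemma --- writing $\varphi(s)-r\in (IR)_g$ as a finite sum of homogeneous products $\varphi(a_k)r_k$, lifting each $r_k$ modulo $JR$ by graded surjectivity of $\varphi_J$, and subtracting --- is exactly the compatible-choice device that the paper leaves implicit, and your degree bookkeeping is correct. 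Equivalently, you have shown that the transition maps between the nonempty fibers $C_I\defeq\{\bar s\in S_g/(S_g\cap I):\varphi_I(\bar s)=b_{I,g}\}$ are surjective. Together with cofinality of $\{IR\}$ in $\mathcal{B}_R$ (from openness), the countable-cofinality case then goes through exactly as you write: open subgroups are closed, so the limit $s$ satisfies $\varphi(s)-r\in I_mR$ for all $m$, and separatedness finishes it.

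The transfinite extension, however, has a real gap that the last sentence of your proposal does not resolve. First, a directed set (here $\mathcal{B}_S$ under reverse inclusion) need not contain any cofinal well-ordered chain: the finite subsets of an uncountable set under inclusion form a directed poset in which every chain has countable union and hence is not cofinal, and nothing in the hypotheses rules out such behaviour for $\mathcal{B}_S$. Second, even granted a cofinal well-ordered family $\{I_\alpha\}_{\alpha<\lambda}$, at a limit ordinal $\mu<\lambda$ the partial net $(s_\alpha)_{\alpha<\mu}$ is Cauchy only with respect to the topology generated by the initial segment $\{I_\alpha\cap S_g\}_{\alpha<\mu}$, which is in general strictly coarser than the topology of $S_g$; gradedwise completeness gives limits for the latter, not the former, so ``the partial nets remain Cauchy in $S_g$'' is precisely the point that can fail. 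In effect the obstruction is a $\varprojlim^1$ over an uncountable directed poset, and surjectivity of the transition maps alone does not kill it. Note that every invocation of this proposition in the paper is in an adic (hence countably cofinal) situation, where your argument is complete; but for the statement at the stated level of generality neither your proposal nor the paper's printed proof closes this gap.
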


\begin{proof}
Let $b\in R$ with homogeneous decomposition $b=\sum_{g\in G} b_g$. For each $I\in \mathcal{B}_S$ and $g\in G$, set $b_{I,g}=(R\to R/IR)(b_g)$ and choose $a_{I,g}\in S$ with $\varphi_I(a_{I,g})=b_{I,g}$. Then $(a_{I,g})_{I\in \mathcal{B}_S}$ defines an element of $\varprojlim_{I\in \mathcal{B}_S} S_g/(S_g\cap I)$, hence of $S_g$ by gradedwise completeness; denote it by $a_g$. Put $a\defeq\sum_{g\in G} a_g\in S$. Then $b-\varphi(a)\in IR$ for all $I\in \mathcal{B}_S$. Since $\varphi$ is open and $R$ is separated, we get $b=\varphi(a)$, proving surjectivity.
\end{proof}

The completion of graded topological rings usually do not have a grading except for the trivial case.
However, elements of the completion have a possibly infinitely many homogeneous decomposition, which we will call the \emph{topological homogeneous decomposition} in \Cref{defn-homog-decomp}.
We first prove the following description of the completion of graded topological rings:

\begin{proposition}\label{const-comp}
Let $R$ be a $G$-graded topological ring which is gradedwise complete. For $\alpha=(a_g)\in \prod_{g\in G} R_g$ and every homogeneous ideal $I\subset R$, set
\[
G_I^{\alpha}\defeq\{\,g\in G\mid a_g\notin I\,\}.
\]
Let
\[
\mathcal{H}\defeq\left\{\alpha=(a_g)\in \prod_{g\in G} R_g \ \middle|\ \text{for all } I\in \mathcal{B}_R,\ G_I^\alpha \text{ is finite}\right\}.
\]
Define $S\defeq\{(a_g)\in \mathcal{H}\}$ with componentwise addition and multiplication
\[
(a_g)\cdot(b_g)\defeq\Bigl(\ \sum_{l\in G} a_l\, b_{g-l}\ \Bigr).
\]
Then $S$ is the completion $\widehat{R}$ of $R$, and the natural map $\iota\colon R\to S$ is given by $\iota(a)=(a_g)$ for the homogeneous decomposition $a=\sum_{g} a_g$.
\end{proposition}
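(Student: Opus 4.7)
The plan is to identify $S$ with the topological completion $\widehat{R} \simeq \varprojlim_{I \in \mathcal{B}_R} R/I$. The key observation is that for each open homogeneous ideal $I \in \mathcal{B}_R$, the quotient $R/I$ inherits a $G$-grading $R/I = \bigoplus_{g \in G} R_g/(I \cap R_g)$, and every element of $R/I$ has only finitely many nonzero homogeneous components; this is what eventually forces the finite-support condition defining $\mathcal{H}$.

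First I would check that the operations on $S$ are well-defined. For $\alpha = (a_g)$ and $\beta = (b_g)$ in $\mathcal{H}$, fix $g \in G$ and $I \in \mathcal{B}_R$. The set $\{l \in G : a_l b_{g-l} \notin I\}$ is contained in $G_I^\alpha \cap (g - G_I^\beta)$, which is finite, so the partial sums of $\sum_{l} a_l b_{g-l}$ form a Cauchy net in $R_g$ with respect to the topology induced from $\mathcal{B}_R$; this net converges by gradedwise completeness of $R$. Moreover, a degree-$g$ component of the product can be nonzero modulo $I$ only if $g \in G_I^\alpha + G_I^\beta$, so the product again lies in $\mathcal{H}$.

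Next I would construct mutually inverse maps $\Phi \colon S \to \widehat{R}$ and $\Psi \colon \widehat{R} \to S$. Define $\Phi(\alpha)_I \defeq \sum_{g \in G_I^\alpha} a_g \bmod I \in R/I$; compatibility with the transition maps $R/J \to R/I$ for $J \subseteq I$ follows because $G_J^\alpha \supseteq G_I^\alpha$ while the extra terms already lie in $I$. Conversely, given $(\bar r_I)_I \in \widehat R$, write the finite homogeneous decomposition $\bar r_I = \sum_g (\bar r_I)_g$ in the graded ring $R/I$; the transition maps preserve the grading, so for each $g \in G$ the family $((\bar r_I)_g)_I$ lies in $\varprojlim_I R_g/(I \cap R_g)$, which equals $R_g$ by gradedwise completeness and \Cref{remk:intersection}, and I define $\Psi((\bar r_I))_g$ to be this element. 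Finite homogeneous support of each $\bar r_I$ forces $\Psi((\bar r_I)) \in \mathcal{H}$, and $\Phi$ and $\Psi$ are mutually inverse by construction.

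Finally I would check that $\Phi$ respects the ring structure and recovers $\iota$. Addition is componentwise on both sides; for multiplication, the convolution formula on $\mathcal{H}$ is precisely the degree-$g$ part of the product $\Phi(\alpha)_I \cdot \Phi(\beta)_I$ computed inside the graded quotient $R/I$, so both descriptions agree. For $a = \sum_g a_g \in R$ with finite homogeneous support, one gets $\Phi(\iota(a))_I = a \bmod I$, which is the canonical map $R \to \widehat R$, so $\iota$ takes the stated form. The main delicate step will be the construction of $\Psi$: one must carefully track the compatibility of the homogeneous components of each $\bar r_I$ across the cofiltered system $\mathcal{B}_R$ and invoke gradedwise completeness of each individual $R_g$ (rather than completeness of $R$) to assemble a single element of $\mathcal{H}$; all remaining verifications reduce to routine bookkeeping of finite supports.
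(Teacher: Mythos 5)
Your proposal is correct and takes essentially the same approach as the paper: both verify that the convolution product converges termwise by the finite-support condition plus gradedwise completeness of each $R_g$, and both identify $S$ with $\varprojlim_{I \in \mathcal{B}_R} R/I$ by matching homogeneous components across the cofiltered system. The only cosmetic difference is that you exhibit an explicit two-sided inverse $\Psi$, whereas the paper proves injectivity and surjectivity of the map $\varphi$ (your $\Phi$) separately.
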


\begin{proof}
Let $\pi_I\colon R\to R/I$ be the projection. If $\alpha=(a_g),\beta=(b_g)\in \mathcal{H}$, then for each $g$ and $I$ the set $G'_g\defeq\{\,l\in G\mid a_l b_{g-l}\notin I\,\}=G_I^\alpha\cap(g-G_I^\beta)$ is finite, so $\bigl(\sum_{l\in G'_g}\pi_I(a_l b_{g-l})\bigr)_{I\in \mathcal{B}_R}$ defines an element of $\varprojlim R_g/(I\cap R_g)$. By completeness of $R_g$ we obtain $c_g\defeq\sum_{l\in G} a_l b_{g-l}\in R_g$, and $\gamma\defeq(c_g)\in \mathcal{H}$ since $G_I^\gamma=G_I^\alpha+G_I^\beta$ is finite. Thus $S$ is a ring and $\iota$ is well-defined.

Define $\varphi\colon S\to \widehat{R}\defeq\varprojlim_{I\in \mathcal{B}_R} R/I$ by
\[
\varphi(\alpha)=\bigl(\ \sum_{g\in G_I^\alpha}\pi_I(a_g)\ \bigr)_{I\in \mathcal{B}_R}.
\]
If $\varphi(\alpha)=0$, then $\pi_I(a_g)=0$ for all $I$, hence $a_g=0$ (completeness of $R_g$), so $\alpha=0$; thus $\varphi$ is injective. Conversely, given a compatible system $(a_I)_{I}$ with homogeneous decompositions $a_I=\sum_g a_{I,g}$, each $(a_{I,g})_I$ lifts to some $a_g\in R_g$, and $\alpha=(a_g)\in \mathcal{H}$ satisfies $\varphi(\alpha)=(a_I)_I$. Hence $\varphi$ is an isomorphism.
\end{proof}

Any cofiltered limits of graded topological ring can be written as follows.
Note that the topology on the limit is the limit topology, i.e., the limit of adic rings may be more complicated than the adic topology.

\begin{proposition}\label{gr-top-lim-const}
Let $\{R_j\}_{j\in J}$ be a cofiltered system of $G$-graded topological rings and set $\mathcal{B}_j\defeq\mathcal{B}_{R_j}$. Then:
\begin{enumerate}
    \item For each $g\in G$, let $R_g\defeq\varprojlim_{j\in J} (R_j)_g$ in abelian groups, and put $R\defeq\bigoplus_{g\in G} R_g$. Then $R$ is a $G$-graded ring.
    \item The projections $\rho_{j,g}\colon R_g\to (R_j)_g$ assemble to graded ring homomorphisms $\rho_j\colon R\to R_j$. Give $R$ the topology with fundamental system
    \[
      \mathcal{B}\defeq\{\ \rho_j^{-1}(I)\ \mid\ j\in J,\ I\in \mathcal{B}_j\ \}.
    \]
    \item With this topology, $R$ is the projective limit of $\{R_j\}_{j\in J}$ in the category of $G$-graded topological rings.
\end{enumerate}
\end{proposition}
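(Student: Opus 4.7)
The plan is to construct $R$ gradedwise using limits in abelian groups, transfer the multiplicative structure from the system $\{R_j\}_{j\in J}$ via compatibility of the multiplication maps, equip $R$ with the initial topology coming from the projection maps, and then verify the universal property. The guiding observation is that any $G$-graded ring homomorphism into a direct-sum-graded target is uniquely determined by its restrictions to each homogeneous component, so the limit problem reduces to a family of abelian-group limit problems indexed by $G$.

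For part (1), the key point is that for each pair $g, h \in G$, the multiplication maps $(R_j)_g \times (R_j)_h \to (R_j)_{g+h}$ commute with the transition maps of the cofiltered system, and hence pass to a bilinear pairing $R_g \times R_h \to R_{g+h}$ in the limit. Extending bilinearly over the direct sum $R = \bigoplus_g R_g$ yields a well-defined multiplication, since any element has only finitely many nonzero homogeneous components, and associativity, commutativity, and distributivity are inherited from each $R_j$ because the projections $R_g \to (R_j)_g$ are jointly injective as $j$ ranges over $J$.

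For part (2), the fact that each $\rho_j\colon R \to R_j$ is a graded ring homomorphism is immediate from the construction, so the main task is to verify that $\mathcal{B}$ is a fundamental system of neighborhoods of $0$ for a graded linear topology on $R$. Each $\rho_j^{-1}(I)$ is a homogeneous ideal as the preimage of a homogeneous ideal under a graded ring map. Closure of $\mathcal{B}$ under finite intersections uses the cofilteredness of $J$: given $\rho_j^{-1}(I)$ and $\rho_k^{-1}(I')$, pick $l \in J$ with transition maps $f_{lj}\colon R_l \to R_j$ and $f_{lk}\colon R_l \to R_k$; by continuity of the transition maps and \cref{remk:intersection}, the homogeneous ideal $f_{lj}^{-1}(I) \cap f_{lk}^{-1}(I')$ lies in $\mathcal{B}_l$, and its preimage under $\rho_l$ coincides with $\rho_j^{-1}(I) \cap \rho_k^{-1}(I')$.

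For part (3), given a $G$-graded topological ring $S$ with a compatible family $\{\phi_j\colon S \to R_j\}_{j \in J}$, I would define $\phi_g\colon S_g \to R_g$ via the universal property of the limit in abelian groups applied to $\{\phi_{j,g}\colon S_g \to (R_j)_g\}_{j \in J}$, then assemble these gradedwise into $\phi\colon S \to R$; multiplicativity of $\phi$ is verified on homogeneous pairs by comparing images in each $R_j$, continuity is immediate from $\phi^{-1}(\rho_j^{-1}(I)) = \phi_j^{-1}(I)$, and uniqueness follows from the limit property on each homogeneous component. The main conceptual point---and really the only subtle step---is recognizing that the ``right'' limit in the graded setting differs from the naive topological-ring limit, whose underlying set would be the full product $\prod_g R_g$; here one instead takes the limit on each graded piece and reassembles via direct sum, reflecting the fact that homogeneous decompositions in graded ring maps involve only finitely many components. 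Once this is identified, the technical content is routine and reduces to familiar manipulations of abelian-group limits combined with the cofilteredness of $J$.
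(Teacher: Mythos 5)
Your proof is correct and follows essentially the same route as the paper's, which compresses (1)–(2) into ``immediate'' and gives for (3) exactly your argument: induce $\varphi_g$ on each homogeneous component via the abelian-group limit property, assemble, and read off continuity and uniqueness. Your closing remark---that the graded limit is taken componentwise and reassembled via direct sum rather than full product---is the correct conceptual point distinguishing this construction from the naive limit of topological rings, and your check that $\mathcal{B}$ is closed under finite intersections via cofilteredness of $J$ and \cref{remk:intersection} is the right way to fill in what the paper leaves unsaid in (2).
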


\begin{proof}
(1)–(2) are immediate. For (3), let $S$ be a $G$-graded topological ring with compatible morphisms $\varphi_j\colon S\to R_j$. For each $g$, the maps $\varphi_{j,g}\colon S_g\to (R_j)_g$ induce a unique $\varphi_g\colon S_g\to R_g$ with $\rho_{j,g}\circ \varphi_g=\varphi_{j,g}$. These assemble to a graded ring homomorphism $\varphi\colon S\to R$. Continuity follows since for $\rho_j^{-1}(I)\in \mathcal{B}$ there exists a homogeneous neighborhood $I'\subset S$ with $\varphi(I')\subset \rho_j^{-1}(I)$. Uniqueness is clear from the universal property of limits in abelian groups.
\end{proof}

\begin{remark}
In the setting of \cref{gr-top-lim-const}, for each $g\in G$ the topological abelian group $R_g$ (with the induced topology) is the projective limit $\varprojlim_{j\in J} (R_j)_g$ in the category of topological abelian groups. Indeed, for all $j\in J$ and $I\in \mathcal{B}_j$ we have
\[
\rho_j^{-1}(I)\cap R_g=\rho_{j,g}^{-1}\bigl(I\cap (R_j)_g\bigr).
\]
\end{remark}

Using the limits of graded topological rings, we can formulate the \emph{tilt} and the \emph{ring of Witt vectors} of graded topological rings.
They will be used to construct the untilt and the corresponding graded perfect prisms of graded perfectoid rings as in \Cref{SharpMapThetaMap}.

\begin{example}\label{graded-witt-tilt}
Let $R$ be a $G$-graded topological ring.
\begin{enumerate}
    \item Assume $G=G[1/p]$. Define the $G$-graded topological ring $R^\flat_{\graded}$ as follows: for each $e\ge 0$,  via the canonical isomorphism 
    \[
    R/pR\simeq (R/pR)^{[p^{-e}]}=\bigoplus_{g \in G} R_{p^{-e}g}/pR_{p^{-e}g}
    \]
    of rings, endow $(R/pR)^{[p^{-e}]}$ with the induced topology; then $(R/pR)^{[p^{-e}]}$ is a $G$-graded topological ring. The Frobenius $F\colon (R/pR)^{[p^{-{e+1}}]}\to (R/pR)^{[p^{-e}]}$ is a morphism of $G$-graded topological rings. By \cref{gr-top-lim-const}, the projective limit exists in the category of $G$-graded topological rings; denote it by $R^{\flat,\graded}$. Explicitly,
    \[
      R^{\flat,\graded}
      = \bigoplus_{g\in G}\ \varprojlim_{e\ge 0}\{\cdots \xrightarrow{F} (R/pR)_{p^{-(e+1)}g}\xrightarrow{F}(R/pR)_{p^{-e}g}\xrightarrow{F}\cdots \xrightarrow{F}(R/pR)_g\},
    \]
    where the fundamental neighborhoods are inverse images along the projections $R^{\flat,\graded}\to (R/pR)^{[p^{-e}]}$. If $R$ is $p$-adically gradedwise complete, for each $e\ge 0$ and $g\in G$ the map
  \begin{align*}
        \sharp_{e, g} \colon \lim_{k \geq 0} \{\cdots \xrightarrow{F} (R/p)_{p^{-(k+1)}g} \xrightarrow{F} (R/p)_{p^{-k}g} \xrightarrow{F} \cdots \xrightarrow{F} (R/p)_g\} & \to R_{p^{-e}g} \\
        (\overline{a_k})_{k \geq 0} & \mapsto \lim_{k \to \infty} a_k^{p^{k-e}}
    \end{align*}
    is a well-defined multiplicative map, and taking direct sums in $g$ yields a graded multiplicative map $\sharp_{e,\graded}\colon R^{\flat,\graded}\to R^{[p^{-e}]}\defeq \bigoplus_{g\in G} R_{p^{-e}g}$. By construction, $\{\sharp_{e,\graded}((\overline{a_k})_{k\ge 0})\}_{e\ge 0}$ is a compatible system of $p$-power roots of $\sharp_{0,\graded}((\overline{a_k})_{k\ge 0})$ in $R$.
    \item Define the $G$-graded topological ring $W^{\graded}(R)$ as follows. Let $\mathcal{B}$ be a fundamental system of neighborhoods of $0$ in $R$ consisting of homogeneous ideals, closed under finite intersections. For $n\ge 1$, the Witt vectors $W_n(R)$ form a graded ring via
    \[
      W_n(R)_g=\{(a_0,\ldots,a_{n-1})\mid a_i\in R_{p^i g}\},
    \]
    and the restriction maps $W_{n+1}(R)\to W_n(R)$ are graded. Give $W_n(R)$ the topology with fundamental system $\mathcal{B}_n\defeq\{W_n(I)\}_{I\in \mathcal{B}}$; then each $W_n(R)$ is a $G$-graded topological ring and the restriction maps are morphisms. Define $W^{\graded}(R)$ to be the limit of $\{W_n(R)\}_{n\ge 1}$ in the category of $G$-graded topological rings (existence by \cref{gr-top-lim-const}). Then
    \[
      W^{\graded}(R)_g=\{(a_0,a_1,\ldots)\in W(R)\mid a_i\in R_{p^i g}\},
    \]
    and the topology on $W^{\graded}(R)$ is generated by $\{W(I)\}_{I\in \mathcal{B}}$ and $\{V^k W^{\graded}(R)\}_{k\ge 1}$ where $V^k W^{\graded}(R)\defeq\ker(W^{\graded}(R)\to W_k(R))$. For each $n\ge 1$,
    \[
      W_n(R)^{\wedge} \cong \varprojlim_{I\in \mathcal{B}} W_n(R/I) \cong W_n\!\bigl(\varprojlim_{I\in \mathcal{B}} R/I\bigr)=W_n(R^{\wedge}),
    \]
    and the gradedwise completion satisfies
    \[
      W_n(R)^{\grwedge}
      \cong \bigoplus_{g\in G}\ \varprojlim_{I\in \mathcal{B}} W_n(R/I)_g
      \cong W_n\!\bigl(R^{\grwedge}\bigr).
    \]
\end{enumerate}
\end{example}

\subsection{Pro-graded rings}

In this subsection, we will introduce the notion of \emph{pro-\(G\)-graded rings} so that we can treat the completion of graded topological rings more fluently to reach our main theorem.

\begin{definition}\label{def-pro-gra}
We define the notion of pro-\(G\)-graded rings:
\begin{itemize}
    \item A \emph{pro-\(G\)-graded ring} is a pair \((R, R_{\graded})\) of topological rings satisfying the following conditions:
    \begin{enumerate}
        \item \(R_{\graded}\) is a gradedwise complete \(G\)-graded topological ring.
        \item The completion of \(R_{\graded}\) with respect to its topology coincides with \(R\).
    \end{enumerate}
    We often write \(R\) for \((R, R_{\graded})\) when there is no confusion, and call \(R_{\graded}\) the \emph{structure graded ring} of \(R\).
    The degree-\(g\) part of \(R_{\graded}\) is denoted by \(R_g\).

    \item If a pro-\(G\)-graded ring \((R, R_{\graded})\) is such that \(R_{\graded}\) is a \(G\)-graded adic ring in the sense of \Cref{DefTopGraded}, then we say that \((R, R_{\graded})\) is a \emph{pro-\(G\)-graded adic ring}.

    \item A \emph{pro-\(G\)-graded homomorphism} \((R, R_{\graded}) \to (S, S_{\graded})\) is a pair \((\varphi, \varphi_{\graded})\) consisting of a ring homomorphism \(\varphi \colon R \to S\) and a continuous \(G\)-graded ring homomorphism \(\varphi_{\graded} \colon R_{\graded} \to S_{\graded}\) such that the induced ring homomorphism \(\widehat{\varphi_{\graded}} \colon R \to S\) coincides with \(\varphi\). 

    \item Let \((R, R_{\graded})\) be a pro-\(G\)-graded ring and let \(I\) be an ideal of \(R\).
    We say that \(I\) is a \emph{homogeneous ideal} if \(I = J R\) for some homogeneous ideal \(J \subseteq R_{\graded}\).
\end{itemize}
\end{definition}

\begin{remark}\label{remk-first-pro-gra}
Let \((R, R_{\graded})\) be a pro-\(G\)-graded ring, and set \(\mathcal{B} \defeq \mathcal{B}_R\).
\begin{itemize}
    \item The completion of \(R_{\graded}\) is given by \(\varprojlim_{I \in \mathcal{B}} R_{\graded} / I\), so condition~\textup{(2)} in \cref{def-pro-gra} means that \(R \simeq \varprojlim_{I \in \mathcal{B}} R_{\graded} / I\).
    
    \item The natural homomorphism 
    \[
    \iota \colon R_{\graded} \to R, \qquad 
    \text{given by } R_{\graded} \to \widehat{R_{\graded}} \xrightarrow{\sim} R,
    \]
    is injective.  
    Indeed, suppose \(a \in \ker(\iota)\). Then \(a \in I\) for all \(I \in \mathcal{B}\).  
    Writing \(a = \sum_{g \in G} a_g\) as its homogeneous decomposition in \(R_{\graded}\), and noting that each \(I\) is homogeneous, we have \(a_g \in I \cap R_g\) for all \(I \in \mathcal{B}\) and \(g \in G\).  
    Since each \(R_g\) is complete with respect to the induced topology from \(R_{\graded}\), it follows that \(a_g = 0\) for all \(g\), hence \(a = 0\).

    \item The family \(\{\overline{IR} \mid I \in \mathcal{B}\}\) forms a fundamental system of neighborhoods of \(0\) for the topology of \(R\) induced by the isomorphism \(\varprojlim_{I \in \mathcal{B}} R / I \simeq R\), 
    where \(\overline{IR}\) denotes the topological closure of \(IR\) in \(R\).  
    Indeed, it suffices to show that \(\overline{IR} = \ker(R \to R_{\graded} / I)\) for \(I \in \mathcal{B}\).
    The inclusion \(\overline{IR} \subseteq \ker(R \to R_{\graded} / I)\) follows from the completeness of \(R_{\graded} / I\).
    Conversely, for \(a \in \ker(R \to R_{\graded} / I)\), we can write \(a = (a_J)_{J \in \mathcal{B}}\) with \(a_J \in R_{\graded} / J\).
    Since \(a \in \ker(R \to R_{\graded} / I)\), we have \(a_J \in I\) for all \(J \in \mathcal{B}\), and hence \(a \in \overline{IR}\), as desired.
    
    \item We obtain the categorical equivalent between the category of gradedwise complete $G$-graded topological ring and the category of pro-$G$-graded rings given by 
    \begin{equation*}
        S \to (S^{\wedge},S) \qquad \varphi \mapsto (\widehat{\varphi},\varphi) \quad \text{and} \quad (S,S_{\gr}) \mapsto S_{\gr} \qquad (\varphi,\varphi_{\gr}) \mapsto \varphi_{\gr},
    \end{equation*}
    where \(S^{\wedge}\) is the completion of a \(G\)-graded topological ring \(S\).
    Essentially, these two notions are the same but thinking about graded topological rings with their completion is more convenient to prove, at least, our main theorems.
\end{itemize}
\end{remark}

\begin{example}
Let \(R\) be a \(G\)-graded ring.  
Then \((R, R)\) is a pro-\(G\)-graded ring equipped with the discrete topology.  
Any pro-\(G\)-graded homomorphism from \((R, R)\) to another pro-\(G\)-graded ring \((S, S_{\graded})\) corresponds uniquely to a \(G\)-graded ring homomorphism \(R \to S_{\graded}\).
\end{example}

\begin{remark} \label{RemarkGRGradedRing}
    In \cite[Definition 8.5.1]{gabber2018Foundations}, they introduced the notion of \emph{\(G\)-graded structure} on a separated topological ring \(R\).
    The category of $G$-graded structure on a complete topological rings is equivalent to the one of pro-$G$-graded rings. 
    However, we developed our definition independently before becoming aware of their work.
    We retain our terminology and provide complete proofs for consistency throughout this article.
\end{remark}

\begin{lemma}\label{LocalizationProGraded}
Let \((R, R_{\graded})\) be a pro-\(G\)-graded ring, and let \(W\) be a multiplicative subset of homogeneous elements of \(R_{\graded}\).
Then the canonical morphism 
\[
R_{\graded} \longrightarrow W^{-1}R_{\graded}
\]
is a graded morphism of \(G\)-graded rings and endows \(W^{-1}R_{\graded}\) with a natural linear topology.  
The pair
\[
\widehat{W^{-1}R} \defeq \bigl(\widehat{W^{-1}R_{\graded}}, (W^{-1}R_{\graded})^{\grwedge}\bigr)
\]
is again a pro-\(G\)-graded ring.  
If \(R\) is adic, then so is \(\widehat{W^{-1}R_{\graded}}\).  
Moreover, \(\widehat{W^{-1}R}\) is the universal pro-\(G\)-graded \(R\)-algebra in which the elements of \(W\) become invertible.
\end{lemma}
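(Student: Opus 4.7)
The plan is to equip $W^{-1}R_{\graded}$ with the natural graded structure and linear topology inherited from $R_{\graded}$, take its gradedwise completion, and check that the resulting pair is pro-$G$-graded and satisfies the stated universal property. I would define the grading by
\[
(W^{-1}R_{\graded})_g \defeq \{a/w : w \in W,\ a \in R_{\graded,\, g + \deg(w)}\},
\]
which is well-defined since $W$ consists of homogeneous elements; the canonical map $R_{\graded} \to W^{-1}R_{\graded}$ is then graded. For the topology I would take $\{I \cdot W^{-1}R_{\graded} : I \in \mathcal{B}_{R_{\graded}}\}$ as a fundamental system of open neighborhoods of $0$. Each such ideal is homogeneous because it is generated by homogeneous elements, the family is cofiltered since it contains $(I \cap J)W^{-1}R_{\graded}$ below $IW^{-1}R_{\graded}$ and $JW^{-1}R_{\graded}$, and multiplication is continuous, making $W^{-1}R_{\graded}$ a $G$-graded topological ring.

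Next I would invoke \Cref{GradedwiseCompletion} to obtain the gradedwise completion $(W^{-1}R_{\graded})^{\grwedge}$, which is gradedwise complete by construction. The essential technical point is then to verify that its topological completion coincides with $\widehat{W^{-1}R_{\graded}}$. For this I would use that for any $G$-graded topological ring $S$, the canonical map $S \to S^{\grwedge}$ has dense image and is continuous with the topologies on both sides defined by the common family $\mathcal{B}_S$, so the two completions agree as completions of a common subring with compatible linear topologies. If $R$ is moreover adic with finitely generated homogeneous ideal of definition $J = (f_1, \ldots, f_k)$, then $JW^{-1}R_{\graded}$ remains finitely generated and homogeneous, the powers $(JW^{-1}R_{\graded})^n$ form a fundamental system of neighborhoods, and passing to the completion yields a pro-$G$-graded adic ring.

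For the universal property, let $(S, S_{\graded})$ be a pro-$G$-graded ring together with a pro-$G$-graded homomorphism from $(R, R_{\graded})$ sending every element of $W$ into the units of $S$. The first step is to show that each $w \in W$ maps to a unit of $S_{\graded}$ itself: if $w$ has degree $g$, then using the topological homogeneous decomposition of \Cref{const-comp} write $w^{-1} = \sum_h (w^{-1})_h$ in $S$; the identity $w \cdot w^{-1} = 1$ forces $w \cdot (w^{-1})_h = 0$ for $h \neq -g$ and $w \cdot (w^{-1})_{-g} = 1$ by comparing graded pieces, hence $w^{-1} = (w^{-1})_{-g} \in S_{-g} \subseteq S_{\graded}$. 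The universal property of the graded localization then produces a continuous graded homomorphism $W^{-1}R_{\graded} \to S_{\graded}$, which by the universal property of the gradedwise completion extends uniquely to $(W^{-1}R_{\graded})^{\grwedge} \to S_{\graded}$, and passing to full completions yields the required $\widehat{W^{-1}R_{\graded}} \to S$. Uniqueness follows from density of the image of $R_{\graded}[W^{-1}]$.

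The main obstacle is verifying that the topological completion of $(W^{-1}R_{\graded})^{\grwedge}$ equals $\widehat{W^{-1}R_{\graded}}$, i.e., the compatibility of gradedwise completion with passage to the full topological completion of a localization, since the remaining steps are formal consequences of the interaction between the universal properties of localization, gradedwise completion, and topological completion.
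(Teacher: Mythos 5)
Your proposal is correct and expands what the paper compresses into a single line (``This follows directly from the definitions.''). The grading and topology you place on $W^{-1}R_{\graded}$ are the intended ones, the argument via topological homogeneous decomposition that each $w\in W$ maps into $S_{\graded}^\times$ is exactly right, and the universal property is then formal.

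The one point worth spelling out, which you correctly flag as the ``essential technical point,'' is that the topological completion of the gradedwise completion $(W^{-1}R_{\graded})^{\grwedge}$ agrees with the topological completion $\widehat{W^{-1}R_{\graded}}$. The cleanest way to see this is to note that for any $G$-graded topological ring $S$ with system $\mathcal{B}_S$, the degreewise limits induce a surjection $S^{\grwedge}\to S/I$ for each $I\in\mathcal{B}_S$ whose kernel $I^{\grwedge}$ generates the topology on $S^{\grwedge}$; then $S^{\grwedge}/I^{\grwedge}\cong S/I$ for all $I$, hence $\varprojlim_I S^{\grwedge}/I^{\grwedge} = \varprojlim_I S/I$, giving the desired identification of completions. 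Your ``dense image with compatible linear topologies'' phrasing is morally this, but pinning it on the isomorphism $S^{\grwedge}/I^{\grwedge}\cong S/I$ makes the step airtight. Everything else, including the adic case and the uniqueness via density, is fine.
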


\begin{proof}
This follows directly from the definitions.
\end{proof}

As mentioned before, elements of the completion of graded topological rings have ``homogeneous decompositions'' in the following sense:

\begin{definition}\label{defn-homog-decomp}
Let \((R, R_{\graded})\) be a pro-\(G\)-graded ring,  let \(a \in R\) and $\cB\defeq\cB_R$.

By \cref{const-comp}, there uniquely exists a family \((a_g)_{g \in G} \in \prod_{g \in G} R_g\) such that:
\begin{itemize}
  \item for every \(I \in \mathcal{B}\), the set
  \[
  G_I \defeq \{\, g \in G \mid a_g \notin I \,\}
  \]
  is finite; and
  \item the element \(a\) corresponds to the projective system
  \[
  \Bigl( \sum_{g \in G_I} \pi_I(a_g) \Bigr)_{I \in \mathcal{B}}
  \qquad\text{under }\qquad
  R \simeq \varprojlim_{I \in \mathcal{B}} R / I.
  \]
\end{itemize}

In this situation, we write
\[
a = \sum_{g \in G} a_g
\]
and call this expression the \emph{topological homogeneous decomposition} of \(a\).
Each \(a_g\) is called the \emph{degree-\(g\) part} of \(a\).
\end{definition}

\begin{remark}\label{rmk-hom-decom}
In the setting of \cref{defn-homog-decomp}, let \(a, b \in R\) with topological homogeneous decompositions
\[
a = \sum_{g \in G} a_g,
\qquad
b = \sum_{g \in G} b_g.
\]
Then their sum and product decompose as
\[
a + b = \sum_{g \in G} (a_g + b_g),
\qquad
ab = \sum_{g \in G} \Bigl( \sum_{l \in G} a_l b_{g - l} \Bigr),
\]
as follows from \cref{const-comp}.
\end{remark}

Using topological homogeneous decompositions is an effective way to consider the complete part of pro-\(G\)-graded rings.
The next propositions are one of the examples of such phenomena.

\begin{proposition}\label{purity-homog-ideal}
Let \((R, R_{\graded})\) be a pro-\(G\)-graded ring and let \(J \subseteq R_{\graded}\) be a homogeneous ideal.
\begin{enumerate}
    \item For any \(a \in JR\), the degree-\(g\) part \(a_g\) of \(a\) lies in \(J\) for every \(g \in G\).  
    In particular, \( JR \cap R_{\graded} = J \).

    \item We have \( \overline{JR} \cap R_{\graded} = \overline{J} \),  
    where \(\overline{JR}\) denotes the topological closure of \(JR\) in \(R\)  
    and \(\overline{J}\) the closure of \(J\) in \(R_{\graded}\).
\end{enumerate}
\end{proposition}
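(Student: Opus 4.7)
For part (1), I would compute the topological homogeneous decomposition of an element of $JR$ directly from the multiplication rule of \cref{rmk-hom-decom}. Since $J$ is a homogeneous ideal of $R_{\graded}$, every $a \in JR$ can be written as a finite sum $a = \sum_{i=1}^{n} j_i r_i$ with each $j_i \in J$ homogeneous of some degree $d_i$ and $r_i \in R$; here one uses that the (finite) algebraic homogeneous components of any element of $J$ still lie in $J$. Writing the topological homogeneous decomposition $r_i = \sum_{g \in G} r_{i,g}$ and applying the product formula with $j_i$ regarded as having its only nonzero topological component in degree $d_i$, the degree-$g$ part of $j_i r_i$ comes out to $j_i \cdot r_{i,g-d_i} \in J \cap R_g$. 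Summing over the finite index set yields $a_g = \sum_{i=1}^{n} j_i r_{i,g-d_i} \in J$, as required. The in-particular statement follows immediately: any $a \in JR \cap R_{\graded}$ has a finite algebraic homogeneous decomposition which must coincide with the topological one by the uniqueness in \cref{const-comp}, so each component already lies in $J$.

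For part (2), the plan is to identify both closures with intersections associated to the natural projections $\pi_I \colon R \twoheadrightarrow R_{\graded}/I$ for $I \in \cB_R$. By \cref{remk-first-pro-gra} we have $\ker \pi_I = \overline{IR}$ and $\{\overline{IR}\}_{I \in \cB_R}$ is a fundamental system of neighborhoods of $0$ in $R$, while $\pi_I$ restricted to $R_{\graded}$ is just the quotient with kernel $I$. The crucial intermediate step is to show that $\pi_I(JR) = (J+I)/I$: the inclusion $\subseteq$ follows by writing any element of $JR$ as a finite sum with homogeneous $j_k \in J$ and pushing it forward to $R_{\graded}/I$, while $\supseteq$ uses that $J \subseteq JR$. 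Consequently $(JR + \overline{IR}) \cap R_{\graded} = J + I$, and intersecting over $I \in \cB_R$ yields $\overline{JR} \cap R_{\graded} = \bigcap_{I \in \cB_R}(J + I) = \overline{J}$, where the last equality is the standard description of closure in a linear topology.

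The main delicate point is in part (1): a priori the topological homogeneous decomposition of $r_i \in R$ is an infinite formal sum, but \cref{rmk-hom-decom} guarantees that the termwise product with the one-term decomposition of $j_i$ still recovers the topological decomposition of $j_i r_i$. Once this is in hand, part (2) becomes a routine chase along the exact sequence $0 \to \overline{IR} \to R \to R_{\graded}/I \to 0$, and no further obstacles arise.
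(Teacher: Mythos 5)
Your proof is correct and follows essentially the same strategy as the paper. For part (1), you both compute the degree-$g$ part of a finite sum $\sum j_i r_i$ via the product formula in \cref{rmk-hom-decom}; the only minor difference is that you first reduce to homogeneous $j_i$, whereas the paper works with arbitrary $x_m\in J$ and notes that only finitely many homogeneous components of each $x_m$ are nonzero. For part (2), the paper derives the isomorphism $R_{\graded}/(I+J)\simeq R/(\overline{IR}+JR)$ via a tensor-product identification and then passes to completions, reading off the closures as kernels of the completion maps; you instead compute directly that $\pi_I(JR)=(J+I)/I$, hence $(JR+\overline{IR})\cap R_{\graded}=J+I$, and then intersect over $I\in\mathcal{B}_R$ — this is the same key intermediate fact, just unwound at the level of the individual quotients rather than their completions, and is arguably a little more elementary.
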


\begin{proof}
(1)  
Take \(a \in JR\). Then we can write
\[
a = \sum_m x_m b_m
\]
for some \(x_m \in J\) and \(b_m \in R\).
Let 
\[
a = \sum_{g \in G} a_g, \quad
x_m = \sum_{g \in G} x_{m,g}, \quad
b_m = \sum_{g \in G} b_{m,g}
\]
be their topological homogeneous decompositions.  
By \cref{rmk-hom-decom}, comparing the degree-\(g\) parts in the equality \(a = \sum_m x_m b_m\), we obtain
\begin{equation}\label{eq:comp-hom-decom}
  a_g = \sum_m \sum_{l \in G} x_{m,l} b_{m,g-l}.
\end{equation}
Since each \(x_m \in R_{\graded}\), only finitely many \(x_{m,l}\) are nonzero, so the right-hand side of \eqref{eq:comp-hom-decom} is a finite sum.  
As \(J\) is a homogeneous ideal and \(x_{m,l} \in J\), \(b_{m,g-l} \in R_{\graded}\), it follows that \(a_g \in J\).

To prove the final statement of (1), note that \(J \subseteq JR \cap R_{\graded}\) is clear.  
Conversely, let \(a \in JR \cap R_{\graded}\).  
By the first part, in its homogeneous decomposition \(a = \sum a_g\) each \(a_g \in J\).  
Since \(a \in R_{\graded}\), the sum is finite, hence \(a \in J\), as required.

\smallskip
(2)  
Let \(\mathcal{B}\) be a fundamental system of neighborhoods of \(0\) in \(R\) consisting of homogeneous ideals, closed under finite intersections.  
Then for each \(I \in \mathcal{B}\), we have natural isomorphisms
\[
R_{\graded}/(I + J)
  \simeq R_{\graded}/I \otimes_{R_{\graded}} R_{\graded}/J
  \simeq R/\overline{IR} \otimes_R R/JR
  \simeq R/(\overline{IR} + JR),
\]
by \cref{remk-first-pro-gra}.  
This implies that
\[
\widehat{R_{\graded}/J} \simeq \widehat{R/JR}.
\]
Therefore,
\[
\overline{JR} \cap R_{\graded}
  = \Ker(R \to \widehat{R/JR}) \cap R_{\graded}
  = \Ker(R_{\graded} \to \widehat{R_{\graded}/J})
  = \overline{J},
\]
as claimed.
\end{proof}

\begin{proposition}\label{quot-pro-gra}
Let \((R, R_{\graded})\) be a pro-\(G\)-graded ring.
\begin{enumerate}
    \item Let \((\varphi, \varphi_{\graded}) \colon (R, R_{\graded}) \to (S, S_{\graded})\) be a pro-\(G\)-graded homomorphism to another pro-\(G\)-graded ring \((S, S_{\graded})\).  
    Then \(\Ker(\varphi)\) is the topological closure of \(\Ker(\varphi_{\graded}) R_{\graded}\) in \(R\).

    \item Let \(J\) be a homogeneous ideal of \(R\), and let \(\overline{J}\) denote its topological closure.  
    Then \((R / \overline{J},\, R_{\graded} / \overline{J \cap R_{\graded}})\) is a pro-\(G\)-graded ring.
\end{enumerate}
\end{proposition}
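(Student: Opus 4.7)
The plan is to establish (2) first and then deduce (1) from it, using the pro-$G$-graded ring structure on the quotient together with the uniqueness of topological homogeneous decompositions from \Cref{const-comp}. For (2), write $J = J'R$ for a homogeneous ideal $J' \subseteq R_{\graded}$. By \Cref{purity-homog-ideal}(1), $J \cap R_{\graded} = J'$, so $\overline{J \cap R_{\graded}} = \overline{J'}$ inside $R_{\graded}$. I would first check that $\overline{J'}$ is again homogeneous: any element of $\overline{J'}$ is approximated by elements of $J'$ modulo an open homogeneous ideal $I \in \mathcal{B}_{R_{\graded}}$, and comparing homogeneous components shows that every degree-$g$ part lies in $J' + I$ for every such $I$, hence in $\overline{J'}$. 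Endow $R_{\graded}/\overline{J'}$ with the quotient topology; its open homogeneous ideals are the images of those of $R_{\graded}$, so on each degree piece $R_g / (\overline{J'} \cap R_g)$ the induced topology agrees with the quotient topology from $R_g$, and this quotient is complete because $R_g$ is complete and $\overline{J'} \cap R_g$ is closed. The computation inside the proof of \Cref{purity-homog-ideal}(2), applied to $\overline{J'}$, then gives $\widehat{R_{\graded}/\overline{J'}} \simeq \widehat{R/\overline{J'}R}$; since $\overline{\overline{J'}R} = \overline{J'R} = \overline{J}$, the Hausdorff completion of $R/\overline{J'}R$ is $R/\overline{J}$, which is already complete because $R$ is complete and $\overline{J}$ closed.

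For (1), set $K \defeq \Ker(\varphi_{\graded})$. The ring $S_{\graded}$ is Hausdorff (each $S_g$ is complete, hence Hausdorff, and a homogeneous decomposition argument lifts this to $S_{\graded}$), so $K$ is closed in $R_{\graded}$. Applying (2) to the homogeneous ideal $KR$ produces the pro-$G$-graded ring $(R/\overline{KR}, R_{\graded}/K)$, and $\varphi$ factors through a continuous map $\bar\varphi \colon R/\overline{KR} \to S$ whose restriction to structure graded rings $\bar\varphi_{\graded} \colon R_{\graded}/K \hookrightarrow S_{\graded}$ is injective. Density of $R_{\graded}$ in $R$ together with the ideal property of $K$ also gives $\overline{KR} = \overline{K}$ in $R$, so the proposition reduces to proving $\bar\varphi$ is injective. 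Given $\bar a \in \Ker(\bar\varphi)$ with topological homogeneous decomposition $\bar a = \sum_g \bar a_g$ from \Cref{defn-homog-decomp}, I would take any $J \in \mathcal{B}_{S_{\graded}}$, use continuity of $\bar\varphi_{\graded}$ to pick an open homogeneous ideal $I \in \mathcal{B}_{R_{\graded}/K}$ with $\bar\varphi_{\graded}(I) \subseteq J$, and compute the image of $\bar\varphi(\bar a)$ in $S_{\graded}/J$ as $\sum_{g \in G_I} \bar\varphi_{\graded}(\bar a_g) \pmod J$. Comparing this with the topological homogeneous decomposition $\bar\varphi(\bar a) = \sum_g b_g$ in $S$ degree by degree in the graded quotient $S_{\graded}/J$ yields $b_g \equiv \bar\varphi_{\graded}(\bar a_g) \pmod J$; letting $J$ vary and using Hausdorffness of $S_{\graded}$ forces $b_g = \bar\varphi_{\graded}(\bar a_g)$ for every $g$. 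Since $\bar\varphi(\bar a) = 0$, uniqueness of the topological homogeneous decomposition from \Cref{const-comp} yields every $b_g = 0$, and injectivity of $\bar\varphi_{\graded}$ then forces every $\bar a_g = 0$, so $\bar a = 0$.

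The hard step is the last identification in (1): verifying that $\sum_g \bar\varphi_{\graded}(\bar a_g)$ is genuinely the topological homogeneous decomposition of $\bar\varphi(\bar a)$ in $S$, not just a convergent expression for it. This requires matching the open homogeneous filtrations of $R/\overline{KR}$ and $S$ via continuity, exploiting that each $S_{\graded}/J$ remains a $G$-graded ring so that a degree-by-degree comparison is legitimate, and invoking \Cref{const-comp} at the end. Without this pro-$G$-graded formalism one would only obtain image-vanishing of $\bar a$ in individual finite quotients, which is insufficient to conclude $\bar a = 0$ inside the completion.
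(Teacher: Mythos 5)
Your proof is correct. For part~(2) the argument is essentially the paper's: write \(J = J'R\), observe \(J \cap R_{\graded} = J'\) via \Cref{purity-homog-ideal}(1), show the closure \(\overline{J'}\) is homogeneous, and identify the completion of \(R_{\graded}/\overline{J'}\) with \(R/\overline{J}\) by the same computation as in the proof of \Cref{purity-homog-ideal}(2). For part~(1), however, the paper's proof is considerably shorter and does not rely on part~(2) at all: it takes \(a \in \Ker(\varphi)\) with topological homogeneous decomposition \(a = \sum_g a_g\), observes that \(\varphi(a) = \sum_g \varphi_{\graded}(a_g)\) is then the topological homogeneous decomposition of \(\varphi(a) = 0\), and concludes from uniqueness (\Cref{const-comp}) that \(\varphi_{\graded}(a_g) = 0\) for all \(g\), hence \(a\) lies in the closure of \(\Ker(\varphi_{\graded})\). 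Your detour — first invoke (2) to form the pro-\(G\)-graded quotient \((R/\overline{KR}, R_{\graded}/K)\), then prove injectivity of \(\bar\varphi\) by a degree-by-degree comparison on that quotient — is logically sound but superfluous: the exact same comparison argument works directly with \(a\) and \(\varphi\) in \(R\), with no need to pass to the quotient or establish (2) first. On the other hand, you are right that the paper's one-line assertion ``\(\varphi(a) = \sum \varphi_{\graded}(a_g)\) is the homogeneous decomposition of \(\varphi(a)\)'' does require the continuity verification you spell out (matching \(\mathcal{B}_S\) against \(\mathcal{B}_R\) via continuity of \(\varphi_{\graded}\), comparing in \(S_{\graded}/J\) degree by degree, using Hausdorffness); the paper treats this as immediate, so your more careful treatment is a legitimate expansion rather than an error — it is just attached to an unnecessary reduction step.
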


\begin{proof} 
(1)  
Take \(a \in \Ker(\varphi)\) and let \(a = \sum a_g\) be its topological homogeneous decomposition.  
Then
\[
\varphi(a) = \sum \varphi_{\graded}(a_g)
\]
is the homogeneous decomposition of \(\varphi(a) = 0\).  
Hence \(a_g \in \Ker(\varphi_{\graded})\) for all \(g \in G\),  
and therefore \(a\) lies in the topological closure of \(\Ker(\varphi_{\graded}) R_{\graded}\), as claimed.

\smallskip
(2)  
Let \(J_{\graded} \subseteq R_{\graded}\) be a homogeneous ideal such that \(J = J_{\graded} R\).  
By \cref{purity-homog-ideal}(2), we have an injection
\[
R_{\graded} / \overline{J_{\graded}} \hookrightarrow R / \overline{J}.
\]
Since \(J_{\graded}\) is homogeneous, its closure is also homogeneous:
\[
\overline{J_{\graded}}
  = \overline{\bigoplus (R_g \cap J_{\graded})}
  = \bigoplus \overline{R_g \cap J_{\graded}}.
\]
Thus, \(R_{\graded} / \overline{J_{\graded}}\) is a \(G\)-graded ring.

It remains to show that \(R / \overline{J}\) is the completion of \(R_{\graded} / \overline{J_{\graded}}\) with respect to the quotient topology.  
Let \(\mathcal{B}\) be a fundamental system of neighborhoods of \(0\) in \(R\) consisting of homogeneous ideals, and assume \(\mathcal{B}\) is closed under finite intersections.  
Then for each \(I \in \mathcal{B}\), we have
\[
R_{\graded} / (I + \overline{J_{\graded}})
  \simeq R / (\overline{IR} + \overline{J_{\graded}} R),
\]
and hence
\[
\varprojlim_{I \in \mathcal{B}} R_{\graded} / (I + \overline{J_{\graded}})
  \simeq
  \varprojlim_{I \in \mathcal{B}} R / (\overline{IR} + \overline{J_{\graded}} R).
\]
This is the completion of \(R_{\graded} / \overline{J_{\graded}}\).  

Now consider the short exact sequence
\[
0 \to
  \overline{J_{\graded}} R / (\overline{IR} \cap \overline{J_{\graded}} R)
  \to
  R / \overline{IR}
  \to
  R / (\overline{IR} + \overline{J_{\graded}} R)
  \to 0,
\]
for every \(I \in \mathcal{B}\).  
Passing to the projective limit, we obtain an exact sequence
\[
0 \to \overline{J}
  \to R
  \to \varprojlim_{I \in \mathcal{B}} R / (\overline{IR} + \overline{J_{\graded}} R)
  \to 0.
\]
This shows that \(R / \overline{J}\) is the completion of \(R_{\graded} / \overline{J_{\graded}}\), as claimed.
\end{proof}

Although the topology on pro-\(G\)-graded rings are not necessarily adic, we have a variant of topological nakayama lemma.
Compare the adic ones in \Cref{graded-top-nakayama}.

\begin{lemma} \label{SurjectiveProGraded}
    Let \((\varphi, \varphi_{\graded}) \colon (R, R_{\graded}) \to (S, S_{\graded})\) be a morphism of pro-\(G\)-graded rings.
    If \(\varphi \colon R \to S\) is surjective, then so is \(\varphi_{\graded} \colon R_{\graded} \to S_{\graded}\)
\end{lemma}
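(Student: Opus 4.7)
The plan is to use the topological homogeneous decomposition from \cref{defn-homog-decomp} to produce a finite-degree preimage of any element of $S_{\graded}$. Given $b\in S_{\graded}$, surjectivity of $\varphi$ yields some $a\in R$ with $\varphi(a)=b$. I would write the topological homogeneous decompositions $a=\sum_{g\in G}a_g$ in $R$ (with $a_g\in R_g$) and $b=\sum_{g\in G}b_g$ in $S$ (with $b_g\in S_g$); since $b$ already lies in $S_{\graded}$, only finitely many $b_g$ are nonzero.

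The key step is to show that $\varphi$ respects topological homogeneous decompositions, i.e.\ $b_g=\varphi_{\graded}(a_g)$ for every $g\in G$. Fix $J\in\mathcal{B}_S$. By continuity of $\varphi_{\graded}$, there exists $I\in\mathcal{B}_R$ with $\varphi_{\graded}(I)\subseteq J$, so
\[
\{\,g\in G\mid \varphi_{\graded}(a_g)\notin J\,\}\ \subseteq\ G_I^a\defeq\{\,g\in G\mid a_g\notin I\,\},
\]
and the right-hand side is finite by the definition of the topological homogeneous decomposition. Tracing through the identifications $R\simeq\varprojlim_{I'\in\mathcal{B}_R}R_{\graded}/I'$ and $S\simeq\varprojlim_{J'\in\mathcal{B}_S}S_{\graded}/J'$, the image of $\varphi(a)$ in $S_{\graded}/J$ equals $\sum_{g}\pi_J(\varphi_{\graded}(a_g))$, since the inclusion $\varphi_{\graded}(I)\subseteq J$ means that the composite $R\to S\to S_{\graded}/J$ factors through $R_{\graded}/I$. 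By the uniqueness part of \cref{const-comp}, this forces $b_g=\varphi_{\graded}(a_g)$ for all $g\in G$.

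Finally, since $G_0\defeq\{\,g\in G\mid b_g\neq 0\,\}$ is finite, the element $a'\defeq\sum_{g\in G_0}a_g$ is a finite sum lying in $R_{\graded}$, and $\varphi_{\graded}(a')=\sum_{g\in G_0}\varphi_{\graded}(a_g)=\sum_{g\in G_0}b_g=b$. Hence $\varphi_{\graded}$ is surjective. The only real obstacle is the middle step — verifying that $\varphi$ intertwines the topological homogeneous decompositions on each side; this uses continuity of $\varphi_{\graded}$ together with the cofiltered compatibility of the neighborhood bases $\mathcal{B}_R$ and $\mathcal{B}_S$. Once this is in hand, producing the preimage is just truncation to the finite support of $b$.
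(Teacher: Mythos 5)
Your proof is correct and uses essentially the same idea as the paper: choose a preimage $a\in R$ of $b$, take its topological homogeneous decomposition, and observe that $\varphi$ sends it to the topological homogeneous decomposition of $b$, so the graded parts of $a$ map onto the graded parts of $b$. The paper works with a single homogeneous $b$ and extracts the one relevant component $a_{\deg(b)}$, whereas you handle a general $b\in S_{\graded}$ and truncate to its finite support, and you also make explicit the continuity argument that justifies the compatibility of the two decompositions; these are only presentational differences.
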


\begin{proof}
    Take any homogeneous element \(x \in S_{\graded}\).
    Because of the surjectivity of \(\varphi\), we can take an element \(y\) of \(R\) such that \(\varphi(y) = x\).
    Take the topological homogeneous decomposition \(y = \sum_{g \in G} y_g\) in \(R\).
    Since \(\varphi(y)\) is the same as \(\sum_{g \in G} \varphi_{\graded}(y_g)\), the equality \(\varphi(y) = x\) in \(S\) implies \(\varphi_{\graded}(y_g) = 0\) for any \(g \neq \deg(x)\) and \(\varphi_{\graded}(y_{\deg(x)}) = x\).
    Then the homogeneous element \(y_{\deg(x)} \in R_{\graded}\) goes to \(x\) via \(\varphi_{\graded}\) and this shows the surjectivity of \(\varphi_{\graded}\).
\end{proof}

In the rest of this subsection, we will compute cofiltered limits of pro-\(G\)-graded rings and take the tilt, the ring of Witt vectors, and the Fontaine's theta map in the pro-\(G\)-graded setting.

\begin{proposition}\label{lim-pro-is-pro}
The category of pro-\(G\)-graded rings has cofiltered limits. 
More precisely, let \(\{(R_j,R_{j,\graded})\}_{j \in J}\) be a cofiltered system of pro-\(G\)-graded rings.
Set \(R \defeq \varprojlim_{j \in J} R_j\) (the limit in the category of rings) and 
\(R_{\graded} \defeq \bigoplus_{g \in G} \varprojlim_{j \in J} (R_{j,\graded})_g\).
Then the canonical map \( \iota \colon R_{\graded} \to R \) is injective and 
\((R,R_{\graded})\) is a pro-\(G\)-graded ring with the following properties:
\begin{enumerate}
    \item The linear (homogeneous) topology on \(R_{\graded}\) is defined by the family of kernels
    \[
      \Ker(R_{\graded} \xrightarrow{\pi_{j}} R_{j,\graded} \twoheadrightarrow R_{j,\graded}/I_{j,i}),
    \]
    where \(\{I_{j,i}\}_{i \in \Sigma_j}\) is a fundamental system of open neighborhoods of \(0\) in \(R_{j,\graded}\) for each \(j \in J\).
    \item For every \(j \in J\), the projection \(R \to R_j\) induces a pro-\(G\)-graded homomorphism
    \((R,R_{\graded}) \to (R_j,R_{j,\graded})\).
    \item These morphisms exhibit \((R,R_{\graded})\) as the cofiltered limit \(\varprojlim_{j \in J} (R_j,R_{j,\graded})\) in the category of pro-\(G\)-graded rings.
\end{enumerate}
\end{proposition}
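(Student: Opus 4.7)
The plan is to build $R_\graded$ as a $G$-graded ring with the stated topology, verify gradedwise completeness, identify $R$ with its completion, and then read off the universal property.

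First I would define the ring structure on $R_\graded = \bigoplus_{g\in G}\varprojlim_{j\in J}(R_{j,\graded})_g$ by componentwise addition and the finitely-supported convolution product $(\sum_g a_g)(\sum_g b_g) = \sum_g\bigl(\sum_{l\in G} a_l b_{g-l}\bigr)$. This is well defined because every transition map of the cofiltered system is a pro-$G$-graded homomorphism and therefore restricts, degree by degree, to a continuous graded map on the structure graded rings. For the injectivity of $\iota\colon R_\graded\to R$, I would combine the injections $\varprojlim_j (R_{j,\graded})_g\hookrightarrow \varprojlim_j R_{j,\graded}\hookrightarrow R$ from \Cref{remk-first-pro-gra} with the uniqueness of topological homogeneous decompositions in each $R_j$ (\Cref{defn-homog-decomp}): if $\sum_g a_g\mapsto 0$ in $R$, then for every $j$ the topological homogeneous decomposition of $0\in R_j$ forces each $\pi_j(a_g)=0$, and hence $a_g=0$ in $\varprojlim_j(R_{j,\graded})_g$.

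Next I would verify that each $K_{j,i}\defeq\Ker(R_\graded\to R_{j,\graded}/I_{j,i})$ is a homogeneous ideal (the target is graded and the map is graded) and that $\{K_{j,i}\}$ is cofiltered under inclusion, using the cofilteredness of $J$ and of each $\{I_{j,i}\}_i$. A direct computation gives $K_{j,i}\cap (R_\graded)_g = \pi_{j,g}^{-1}\bigl(I_{j,i}\cap(R_{j,\graded})_g\bigr)$, so the induced topology on each degree part coincides with the limit topology of $\varprojlim_j(R_{j,\graded})_g$ in topological abelian groups. Since a cofiltered limit of complete Hausdorff abelian groups is complete and Hausdorff, each $(R_\graded)_g$ is complete, proving gradedwise completeness and simultaneously establishing (1).

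The core step is identifying $R$ with the completion of $R_\graded$. Using the explicit model of \Cref{const-comp}, an element of $\widehat{R_\graded}$ is a family $(a_g)\in\prod_g\varprojlim_j(R_{j,\graded})_g$ with $\{g : \pi_j(a_g)\notin I_{j,i}\}$ finite for every $(j,i)$. I would send such $(a_g)$ to the family whose $j$-th coordinate is the topological homogeneous decomposition $\sum_g\pi_j(a_g)\in R_j$, which is a well-defined element of $R = \varprojlim_j R_j$ by the finiteness condition and compatibility of the $\pi_j$'s. Conversely, starting from $(y_j)\in R$ and expanding $y_j=\sum_g y_{j,g}$ in $R_j$, the key observation is that transitions of pro-$G$-graded homomorphisms preserve topological homogeneous decompositions, by continuity on $R_j$ combined with gradedness on $R_{j,\graded}$ and the uniqueness in \Cref{defn-homog-decomp}; hence $(y_{j,g})_j\in\varprojlim_j(R_{j,\graded})_g$. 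Checking that the two assignments are mutually inverse ring isomorphisms completes the identification, so $(R,R_\graded)$ is a pro-$G$-graded ring.

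The remaining parts are formal. Item (2) is baked into the construction, since the projections $R\to R_j$ and $R_\graded\to R_{j,\graded}$ are compatible with completions; item (3) follows by factoring any compatible family $(S,S_\graded)\to(R_j,R_{j,\graded})$ through $R=\varprojlim_j R_j$ on the ring part and through each $\varprojlim_j(R_{j,\graded})_g$ on the graded part, and verifying continuity against the topology of (1). The main obstacle is the completion identification in the third paragraph, where one must reconcile the direct-sum description of $R_\graded$ with the ``infinite homogeneous decompositions'' living in the completion $R$, which requires careful use of the uniqueness statement in \Cref{defn-homog-decomp} to track compatibility under cofiltered transitions.
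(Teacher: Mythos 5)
Your proposal is correct and follows essentially the same route as the paper: define the topology on $R_{\graded}$ via the kernels $K_{j,i}$, push the limit degree-by-degree to obtain gradedwise completeness, and identify the completion of $R_{\graded}$ with $R$ via compatible topological homogeneous decompositions. The one place you are more explicit than the paper is in spelling out (via \Cref{const-comp}) the two mutually inverse maps between $\widehat{R_{\graded}}$ and $R$, and in isolating the lemma that pro-\(G\)-graded transitions preserve topological homogeneous decompositions, which the paper compresses into the phrase ``compatible in $j$''.
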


\begin{remark} \label{CatEquivProG}
Before the proof, we explain the relation with \Cref{gr-top-lim-const}.
Since the completion of a given topological ring is unique, the completion functor yields an equivalence between the category of gradedwise complete \(G\)-graded topological rings and the category of pro-\(G\)-graded rings.

Consequently, \Cref{gr-top-lim-const} implies that pro-\(G\)-graded rings admit cofiltered limits, and that the limit 
\(\varprojlim_{j \in J} (R_j, R_{j,\graded})\) is obtained as follows: the structure graded ring is
\(R_{\graded} \defeq \bigoplus_{g \in G} \varprojlim_{j \in J} (R_{j,\graded})_g\) endowed with the limit topology along the maps \(R_{\graded} \to R_{j,\graded}\), and \(R\) is its completion.
A priori it is not clear that this completion \(R\) coincides with the projective limit \(\varprojlim_{j \in J} R_j\) of complete topological rings; \Cref{lim-pro-is-pro} asserts that this is in fact true.

Categorically, \Cref{gr-top-lim-const} and \Cref{lim-pro-is-pro} together show that the forgetful functor from pro-\(G\)-graded rings to (topological) rings commutes with cofiltered limits.
\end{remark}

\begin{proof}[Proof of \Cref{lim-pro-is-pro}]
For each \(g \in G\), the system \(\{(R_{j,\graded})_g\}_{j \in J}\) is cofiltered in abelian groups; set
\(R_g \defeq \varprojlim_{j \in J} (R_{j,\graded})_g\) and \(R_{\graded} \defeq \bigoplus_{g \in G} R_g\).

\smallskip
\textbf{Injectivity of \(\iota\colon R_{\graded}\hookrightarrow R\).}
For each \(j\) and \(g\), the inclusion \((R_{j,\graded})_g \hookrightarrow R_{j,\graded}\) induces \(R_g \hookrightarrow R\).
Summing over \(g\), we obtain \(\iota \colon R_{\graded} \to R\) compatible with the projections:
\[
\begin{tikzcd}
R_g \arrow[r, hook] \arrow[d] & R_{\graded} \arrow[r, "\iota"] \arrow[d, "\pi_j"] & R \arrow[d] \\
(R_{j,\graded})_g \arrow[r, hook] & R_{j,\graded} \arrow[r, hook] & R_j
\end{tikzcd}
\]
Write \(a=\sum_{g} a_g \in R_{\graded}\).
If \(\iota(a)=0\), then for each \(j\) we have \(0=\sum_{g} \pi_j(a_g)\) in \(R_j\).
Since this is a homogeneous decomposition and \(R_{j,\graded}\hookrightarrow R_j\) is injective, it follows that \(\pi_j(a_g)=0\) for all \(j,g\).
Thus \(a_g=0\) in \(R_g=\varprojlim_j (R_{j,\graded})_g\) for each \(g\), hence \(a=0\). Therefore \(\iota\) is injective.


\smallskip
\textbf{Surjectivity of $\iota$:}
For each \(j\), fix a cofiltered fundamental system \(\{I_{j,i}\}_{i\in\Sigma_j}\) of open homogeneous ideals in \(R_{j,\graded}\) with \(\varprojlim_i R_{j,\graded}/I_{j,i} \cong R_j\).
Let \(\Sigma\defeq\{(j,i) \mid j\in J,\ i\in\Sigma_j\}\) and define
\[
K_{j,i} \defeq \Ker(R_{\graded} \xrightarrow{\pi_j} R_{j,\graded} \twoheadrightarrow R_{j,\graded}/I_{j,i})
\quad\text{for }(j,i)\in\Sigma.
\]
Endow \(R_{\graded}\) with the linear topology generated by \(\{K_{j,i}\}_{(j,i)\in\Sigma}\). 
Then, for each \(g\),
\[
R_g \cong \varprojlim_{(j,i)\in\Sigma} R_g/(K_{j,i}\cap R_g),
\]
so each \(R_g\) is complete for the induced topology and \(R_{\graded}\) is gradedwise complete.

Finally, using the identifications
\[
R = \varprojlim_{j\in J} R_j \cong \varprojlim_{(j,i)\in\Sigma} R_{j,\graded}/I_{j,i},
\qquad
R_{\graded} = \bigoplus_{g\in G} \varprojlim_{j\in J} (R_{j,\graded})_g,
\]
one checks that the completion of \(R_{\graded}\) with respect to \(\{K_{j,i}\}\) is canonically isomorphic to \(R\):
every \(x\in R\) admits a (topological) homogeneous decomposition \(x_j=\sum_{g} x_{j,g}\) in each \(R_j\), compatible in \(j\), hence defining a coherent family $((x_{j,g})_j)_g$ in \(\prod_g R_g\) which maps to \(x\).
Therefore, we obtain the surjectivity of $\iota$, thus \((R,R_{\graded})\) is a pro-\(G\)-graded ring.

The assertions \textup{(2)} and \textup{(3)} follow from the construction and the universal property of projective limits. 
\end{proof}

\begin{lemma}\label{pro-graded-tilt}
Let \((R, R_{\graded})\) be a pro-\(G\)-graded ring such that \(R_{\graded}\) is a \(G\)-graded adic ring with a homogeneous ideal of definition \(J\).
Assume \(G = G[1/p]\) and that \(p\) is topologically nilpotent in \(R_{\graded}\).
Then the pair \((R^\flat, R^{\flat,\graded}_{\graded})\) is a pro-\(G\)-graded ring, where \(R^{\flat,\graded}_{\graded}\) is defined in \Cref{graded-witt-tilt}(1) and \(R^\flat = \varprojlim_{e \ge 0} R/pR\) is the tilt of \(R\) equipped with the limit topology along the projections \(R^\flat \to R/pR\).
Moreover, the morphism \(\sharp_{e,\graded} \colon R^{\flat,\graded}_{\graded} \to R_{\graded}\) is compatible with the map \(\sharp^{1/p^e} \colon R^\flat \to R\) for each \(e \ge 0\).
\end{lemma}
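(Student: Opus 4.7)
The plan is to identify \((R^\flat, R^{\flat,\graded}_{\graded})\) with a cofiltered limit in the category of pro-\(G\)-graded rings, using \Cref{lim-pro-is-pro} and \Cref{CatEquivProG}; this will simultaneously supply the pro-\(G\)-graded structure and match the underlying topological ring with \(R^\flat\).

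First, I would equip \(R/pR\) with a pro-\(G\)-graded structure. Since \(p \in R_0 \subseteq R_{\graded}\), the ideal \(pR\) is homogeneous in the sense of \Cref{def-pro-gra}, so \Cref{quot-pro-gra}(2) yields the pro-\(G\)-graded ring \((R/\overline{pR},\, R_{\graded}/\overline{pR_{\graded}})\). Using the adic hypothesis (with homogeneous ideal of definition \(J\), and \(p^N \in J\) for some \(N \ge 1\) by topological nilpotence), I would show that \(pR_g\) is closed in the complete module \(R_g\) for each \(g \in G\), and then conclude by \Cref{purity-homog-ideal}(2) that \(\overline{pR}=pR\); hence the pro-\(G\)-graded quotient is exactly \((R/pR,\, R_{\graded}/pR_{\graded})\). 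Applying the rescaling \([p^{-e}]\), which merely relabels graded pieces without touching the topology, produces a cofiltered diagram \(\{(R/pR,\, (R/pR)^{[p^{-e}]}_{\graded})\}_{e \ge 0}\) of pro-\(G\)-graded rings with Frobenius transitions.

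Next I would take the cofiltered limit of this diagram via \Cref{lim-pro-is-pro}. On the one hand, the explicit formula there identifies its structure graded ring with \(\bigoplus_{g \in G} \varprojlim_{e} (R/pR)_{p^{-e}g}\) endowed with the limit topology, which is precisely \(R^{\flat,\graded}_{\graded}\) from \Cref{graded-witt-tilt}(1). On the other hand, by \Cref{CatEquivProG} the forgetful functor from pro-\(G\)-graded rings to topological rings commutes with cofiltered limits, so the underlying topological ring is \(\varprojlim_e R/pR = R^\flat\) with the limit topology, exactly as in the statement. This identifies \((R^\flat, R^{\flat,\graded}_{\graded})\) with the cofiltered limit and hence shows it is a pro-\(G\)-graded ring.

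Finally, the compatibility of \(\sharp_{e,\graded}\) with \(\sharp^{1/p^e}\) is immediate from the definitions: on a compatible sequence \((\overline{a_k})_{k \ge 0}\), both maps are given by \(\lim_{k \to \infty} a_k^{p^{k-e}}\), with the limit existing by completeness of \(R_{\graded}\) respectively of \(R\); and the inclusion \(R^{\flat,\graded}_{\graded} \hookrightarrow R^\flat\) matches \(R_{\graded} \hookrightarrow R\) degree-by-degree. The main obstacle in this plan is the technical step of verifying that \(pR_g\) is topologically closed in the complete module \(R_g\), which is needed to identify \(R/\overline{pR}\) with the plain ring-theoretic quotient \(R/pR\) appearing in the tilt; if in some edge case this closedness were to fail, one should replace \(R/pR\) by \((R/\overline{pR},\, R_{\graded}/\overline{pR_{\graded}})\) throughout and check that the tilt construction is insensitive to this change, since the limit along Frobenius forces the relevant obstructions to vanish.
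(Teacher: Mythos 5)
Your overall strategy -- identify \((R^\flat, R^{\flat,\graded}_{\graded})\) as a cofiltered limit of pro-\(G\)-graded rings via \Cref{quot-pro-gra}(2), \Cref{lim-pro-is-pro}, and \Cref{CatEquivProG} -- is the same as the paper's, and the handling of the compatibility of \(\sharp_{e,\graded}\) with \(\sharp^{1/p^e}\) is fine. The gap is in the middle step: your primary plan rests on showing that \(pR_g\) is closed in \(R_g\) so that \(\overline{pR}=pR\) and the pro-graded quotient is literally \((R/pR, R_{\graded}/pR_{\graded})\). The paper makes no such claim, and in the generality of the hypotheses (an arbitrary finitely generated homogeneous \(J\) with \(p\) merely topologically nilpotent, so \(p^N\in J\) but \(J\) possibly much larger) there is no reason for \(pR_g\) to be closed: classical \(J\)-adic completeness of \(R_g\) does not force \(R_g/pR_g\) to be \(J\)-adically separated. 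So the closedness step should be dropped, not merely hedged.

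Your fallback is the right route, and it is the one the paper actually takes, but the phrase ``the limit along Frobenius forces the relevant obstructions to vanish'' needs to be made into an argument. The precise mechanism is this: one works throughout with the pro-\(G\)-graded quotient \((R/\overline{pR}, R_{\graded}/\overline{pR_{\graded}})\) (and its rescalings), so \Cref{lim-pro-is-pro} produces a pro-\(G\)-graded ring whose underlying topological ring is \(\varprojlim_e R/\overline{pR}\). To compare this to \(R^\flat=\varprojlim_e R/pR\), use the short exact sequence
\[
0 \longrightarrow \overline{pR}/pR \longrightarrow R/pR \longrightarrow R/\overline{pR} \longrightarrow 0.
\]
Since \(R\) is \(J\)-adically complete, \(R/pR\) is derived \(J\)-complete, and by \stacksproj{0G3I} the kernel \(\overline{pR}/pR\) of the classical-completion map is a \emph{nilpotent} ideal of \(R/pR\). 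The transition maps in the tilt tower are Frobenius; on a nilpotent ideal they are eventually zero, so the tower \(\{\overline{pR}/pR\}_{e\ge 0}\) is pro-zero and the induced map \(\varprojlim_{F} R/pR \to \varprojlim_{F} R/\overline{pR}\) is an isomorphism (and likewise \(R^1\varprojlim\) vanishes). Running the same argument degreewise, with \(R_{p^{-e}g}\) in place of \(R\), identifies \(\varprojlim_e (R_{\graded}/\overline{pR_{\graded}})^{[p^{-e}]}\) with \(R^{\flat,\graded}_{\graded}\). Once this comparison is in place your proposal closes; I would simply promote the fallback to the main argument and insert the \stacksproj{0G3I} nilpotence step, rather than try to establish closedness of \(pR_g\), which the hypotheses do not guarantee.
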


\begin{proof}
Recall from \Cref{graded-witt-tilt}(1) that the \(G\)-graded topological ring \(R^{\flat,\graded}_{\graded}\) is the projective limit of the system \(\{(R_{\graded}/pR_{\graded})^{[p^{-e}]}\}_{e \ge 0}\), where each
\[
(R_{\graded}/pR_{\graded})^{[p^{-e}]} = \bigoplus_{g \in G} (R_{p^{-e}g} / pR_{p^{-e}g})
\]
is canonically isomorphic to \(R_{\graded} / pR_{\graded}\) as a ring.
Hence, we obtain a projective system \(\{(R/pR, (R_{\graded}/pR_{\graded})^{[p^{-e}]})\}_{e \ge 0}\) of pairs consisting of the topological ring \(R/pR\) and the \(G\)-graded adic ring \((R_{\graded}/pR_{\graded})^{[p^{-e}]}\), whose transition maps are Frobenius morphisms.

\smallskip
We compute the gradedwise completion \((R_{\graded}/pR_{\graded})^{\grwedge}\) with respect to the homogeneous ideal \(J\) as in \Cref{GradedwiseCompletion}.
Since \(R_{\graded}\) is already gradedwise complete, the completion of each graded component \(R_g/pR_g\) is \(R_g / \overline{pR_g}\), where \(\overline{pR_g}\) denotes the topological closure.
Thus the gradedwise completion satisfies
\[
(R_{\graded}/pR_{\graded})^{\grwedge} \cong R_{\graded} / \overline{pR_{\graded}},
\]
whose degree-\(g\) part is \(R_g / \overline{pR_g}\).
The same computation applies to each \((R_{\graded}/pR_{\graded})^{[p^{-e}]}\), giving
\(((R_{\graded}/pR_{\graded})^{[p^{-e}]})^{\grwedge} \cong (R_{\graded}/\overline{pR_{\graded}})^{[p^{-e}]}\) for all \(e \ge 0\).

\smallskip
Since \(R\) is the \(J\)-adic completion of \(R_{\graded}\), the \(J\)-adic completion of \(R_{\graded}/\overline{pR_{\graded}}\) agrees with that of \(R/pR\), which is \(R/\overline{pR}\).
Hence we have a projective system
\[
\{(R/\overline{pR}, (R_{\graded}/\overline{pR_{\graded}})^{[p^{-e}]})\}_{e \ge 0}
\]
of pro-\(G\)-graded rings whose transition maps are Frobenius morphisms.

\smallskip
Applying \Cref{lim-pro-is-pro}, we obtain
\[
\varprojlim_{e \ge 0} (R/\overline{pR}, (R_{\graded}/\overline{pR_{\graded}})^{[p^{-e}]})
  \simeq
  \bigl(\varprojlim_{e \ge 0} R/\overline{pR},\, \varprojlim_{e \ge 0} (R_{\graded}/\overline{pR_{\graded}})^{[p^{-e}]}\bigr),
\]
which is a pro-\(G\)-graded ring.
It remains to identify this with \((R^\flat, R^{\flat,\graded}_{\graded})\).

\smallskip
We have a short exact sequence of topological rings
\[
0 \longrightarrow \overline{pR}/pR
  \longrightarrow R/pR
  \longrightarrow R/\overline{pR}
  \longrightarrow 0.
\]
Since \(R\) is \(J\)-adically complete, \(R/pR\) is derived \(J\)-complete.  
Hence the first term is killed by a power of \(p\) by \cite{stacks-project}*{Tag~0G3I}, and taking the projective limit along the Frobenius morphisms yields
\[
\varprojlim_{e \ge 0} R/\overline{pR} \cong R^\flat
\]
as topological rings.

The same argument applies degreewise to each graded piece.
From the exact sequence
\[
0 \longrightarrow \overline{pR_{p^{-e}g}} / pR_{p^{-e}g}
  \longrightarrow R_{p^{-e}g} / pR_{p^{-e}g}
  \longrightarrow R_{p^{-e}g} / \overline{pR_{p^{-e}g}}
  \longrightarrow 0,
\]
we deduce that
\[
\varprojlim_{e \ge 0} (R_{\graded} / \overline{pR_{\graded}})^{[p^{-e}]} \cong R^{\flat,\graded}_{\graded}
\]
as \(G\)-graded topological rings.

Finally, the maps \(\sharp_{e,\graded} \colon R^{\flat,\graded}_{\graded} \to R_{\graded}\) are compatible with \(\sharp^{1/p^e} \colon R^\flat \to R\) under these identifications, completing the proof.
\end{proof}

\begin{lemma}\label{pro-graded-Witt}
Let \((R, R_{\graded})\) be a pro-\(G\)-graded ring such that \(R_{\graded}\) is a \(G\)-graded adic ring with a homogeneous ideal of definition \(J\).
Then the pair \((W(R), W^{\graded}(R_{\graded}))\) is a pro-\(G\)-graded ring, where \(W(R)\) is the Witt vector ring of \(R\) and \(W^{\graded}(R_{\graded})\) is the \(G\)-graded ring defined in \Cref{graded-witt-tilt}(2), endowed with the topology generated by \(\{W(J^k)\}_{k \ge 1}\) and \(\{V^k W^{\graded}(R_{\graded})\}_{k \ge 1}\).
\end{lemma}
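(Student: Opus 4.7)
The plan is to realize $(W(R), W^{\graded}(R_{\graded}))$ as the cofiltered limit, taken in the category of pro-$G$-graded rings, of the truncated system $\{(W_n(R), W_n(R_{\graded}))\}_{n \ge 1}$, and then apply \Cref{lim-pro-is-pro} together with the finite-level identifications already recorded in \Cref{graded-witt-tilt}(2).

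First I would handle the truncated case: for each $n \ge 1$, the pair $(W_n(R), W_n(R_{\graded}))$ is a pro-$G$-graded ring, where $W_n(R_{\graded})$ carries the topology generated by $\{W_n(J^k)\}_{k \ge 1}$. Gradedwise completeness is immediate because the degree-$g$ part
\[
W_n(R_{\graded})_g = \prod_{i=0}^{n-1} R_{p^i g}
\]
is a finite product of complete topological abelian groups for the topology induced by $J$, each $R_{p^i g}$ being complete by assumption. The identification of its completion with $W_n(R)$ is precisely the computation in \Cref{graded-witt-tilt}(2): $W_n(R_{\graded})^{\wedge} \cong W_n(\varprojlim_{I \in \mathcal{B}} R_{\graded}/I) \cong W_n(R)$.

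Next, the restriction maps $W_{n+1} \to W_n$ are continuous $G$-graded ring homomorphisms on the graded side and continuous ring homomorphisms on the completed side, and they commute with completion. They therefore assemble into a cofiltered inverse system in the category of pro-$G$-graded rings. Applying \Cref{lim-pro-is-pro} produces a pro-$G$-graded ring whose underlying ring is $\varprojlim_n W_n(R) = W(R)$ and whose structure graded ring has degree-$g$ component
\[
\varprojlim_n W_n(R_{\graded})_g = \{(a_0, a_1, \dots) \in W(R) : a_i \in R_{p^i g}\} = W^{\graded}(R_{\graded})_g,
\]
exactly matching the definition from \Cref{graded-witt-tilt}(2).

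Finally, I would verify that the limit topology produced by \Cref{lim-pro-is-pro}(1) coincides with the topology stated in the lemma. By that proposition, a fundamental system of open neighborhoods of $0$ consists of the kernels of the composites $W^{\graded}(R_{\graded}) \to W_n(R_{\graded}) \to W_n(R_{\graded})/W_n(J^k)$. Such a kernel consists of Witt vectors $(a_0, a_1, \dots)$ with $a_i \in J^k$ for $i < n$ and arbitrary entries afterward; splitting such a vector as its first $n$ components (with later entries zeroed out) plus its tail gives the identity
\[
\Ker\bigl(W^{\graded}(R_{\graded}) \to W_n(R_{\graded})/W_n(J^k)\bigr) = \bigl(W(J^k) \cap W^{\graded}(R_{\graded})\bigr) + V^n W^{\graded}(R_{\graded}).
\]
Since in a linear topology the sum of two open ideals is open, this identifies the limit topology with the topology generated by $\{W(J^k)\}_{k \ge 1}$ and $\{V^k W^{\graded}(R_{\graded})\}_{k \ge 1}$, completing the proof. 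The main obstacle in this outline is precisely this last topology check: the two generating systems look different, and one must track both the Verschiebung filtration and the $J$-adic filtration carefully to see that they generate the same linear topology.
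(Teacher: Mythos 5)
Your proposal takes essentially the same route as the paper's proof, which also realizes $(W(R), W^{\graded}(R_{\graded}))$ as the cofiltered limit of the truncated pairs $\{(W_n(R), W_n(R_{\graded}))\}_{n \geq 1}$ via \Cref{graded-witt-tilt}(2) and \Cref{lim-pro-is-pro}. Your additional verification — that the limit topology furnished by \Cref{lim-pro-is-pro}(1) coincides with the topology generated by $\{W(J^k)\}$ and $\{V^k W^{\graded}(R_{\graded})\}$, using the decomposition of a Witt vector into its truncation and $V^n$-tail — is correct and fills in a step the paper leaves implicit.
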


\begin{proof}
For each \(n \ge 1\), the pair \((W_n(R), W_n(R_{\graded}))\) forms a pro-\(G\)-graded ring by \Cref{graded-witt-tilt}(2).
By \Cref{lim-pro-is-pro}, the limit of the projective system \(\{(W_n(R), W_n(R_{\graded}))\}_{n \ge 1}\) in the category of pro-\(G\)-graded rings is \((W(R), W^{\graded}(R_{\graded}))\).
\end{proof}

\begin{construction}\label{SharpMapThetaMap}
Let \((R, R_{\graded})\) be a pro-\(G\)-graded ring such that \(R_{\graded}\) is a \(G\)-graded adic ring with a homogeneous ideal of definition \(J\).
Assume \(G = G[1/p]\) and that \(p\) is topologically nilpotent in \(R_{\graded}\).


Since the maps \(\sharp_{e,\graded} \colon R^{\flat,\graded}_{\graded} \to R_{\graded}^{[p^{-e}]}\) are multiplicative \(G\)-graded morphisms by \Cref{graded-witt-tilt}(1), the usual arguments on non-graded objects shows that we can obtain a \(G\)-graded ring homomorphism
\begin{align*}
  \theta_{\graded} \colon A_{\inf}^{\graded}(R_{\graded}) \defeq W(R^{\flat,\graded}_{\graded})
  &\longrightarrow R_{\graded}, \\
  (a_0, a_1, \dots) &\longmapsto 
    \sharp_{0,\graded}(a_0) + p \sharp_{1,\graded}(a_1)
    + p^2 \sharp_{2,\graded}(a_2) + \cdots.
\end{align*}

Moreover, the compatibility between 
\(\sharp_{0,\graded} \colon R^{\flat,\graded}_{\graded} \to R_{\graded}\)
and
\(\sharp \colon R^\flat \to R\)
implies that the following diagram commutes:
\begin{center}
  \begin{tikzcd}
    W(R^\flat) \arrow[r, "\theta"] 
      & R \\
    W^{\graded}(R^{\flat,\graded}_{\graded}) 
      \arrow[r, "\theta_{\graded}"] \arrow[u, hook] 
      & R_{\graded} \arrow[u, hook]
  \end{tikzcd}
\end{center}
where \(\theta \colon W(R^\flat) \to R\) is the Fontaine theta map.
Hence we obtain a morphism of pro-\(G\)-graded rings
\[
(\theta, \theta_{\graded}) \colon (A_{\inf}(R), A_{\inf}^{\graded}(R_{\graded})) \longrightarrow (R, R_{\graded}).
\]
\end{construction}

\section{Graded perfectoid rings}

This section is devoted to introduce the notion of graded perfectoid rings and the proof of a graded variant of Andr\'e's flatness lemma together with the existence of the ``graded perfectoidization'' of semiperfectoid rings.

\subsection{Graded perfectoid rings}

\begin{definition} \label{DefGradedPerfectoid}
We define a graded variant of perfectoid rings.
\begin{itemize}
\item Let $R$ be a $G$-graded ring.
We say that $R$ is \emph{a graded perfectoid ring} if $R$ is $p$-adically gradedwise complete and $R^{\wedge p}$ is perfectoid.
    
\item Let $R=(R,R_{\graded})$ be a pro-$G$-graded adic ring.
We say that $R$ is \emph{a pro-$G$-graded perfectoid ring} if $p$ is topological nilpotent in $R_{\graded}$ and $R_{\graded}$ is a $G$-graded perfectoid ring.
\end{itemize}
\end{definition}

\begin{remark}\label{first-cat-equiv}
The categorical equivalence in \cref{remk-first-pro-gra} induces the following categorical equivalence
\[
(\text{category of $G$-graded perfectoid}) \xrightarrow{\simeq} (\text{category of pro-$G$-graded $p$-adic perfectoid})
\]
given by $R \mapsto (R^{\wedge p},R)$, and the quasi-inverse is given by $(R,R_{\gr}) \to R_{\gr}$.
\end{remark}

\begin{lemma}\label{p-th-power}
Let \((R, R_{\graded})\) be a pro-\(G\)-graded ring, and let \(a \in R\).
Take a topological homogeneous decomposition \(a = \sum_{g \in G} a_g\).
\begin{enumerate}
    \item We have \(a^p \equiv \sum_{g \in G} a_g^p \pmod{pR}\).
    \item If $a$ and $p$ are topologically nilpotent, then each $a_g$ for $g \in G$ is topologically nilpotent, and so is 
\[
\frac{a^p - \sum_{g \in G} a_g^p}{p}
\]
in $R$.
    \item If \(a\) is \(p\)-torsion, then \(a^p = \sum_{g \in G} a_g^p\).
\end{enumerate}
In particular, we obtain two graded homomorphisms
\[
R_{\graded}/p R_{\graded}
        \xrightarrow{\,a \mapsto a^p\,}
        (R_{\graded}/p R_{\graded})^{[p]}, 
        \qquad R_{\graded}[p]
        \xrightarrow{\,a \mapsto a^p\,}
        (R_{\graded}[p])^{[p]}
\]
of rings and multiplicative monoids respectively.
\end{lemma}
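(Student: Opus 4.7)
The plan is to prove both parts via the topological multinomial expansion of $a^p$. Iterating \cref{rmk-hom-decom}, the topological homogeneous decomposition of $a^p$ is
\[
a^p = \sum_{h \in G} (a^p)_h, \qquad (a^p)_h = \sum_{\substack{(l_1, \dots, l_p) \in G^p \\ l_1 + \dots + l_p = h}} a_{l_1} \cdots a_{l_p},
\]
where each inner sum converges in $R_h$ because only finitely many ordered tuples are nonzero modulo any fixed open homogeneous ideal $I \in \mathcal{B}_R$. Grouping ordered tuples by their underlying multisets $M = \{g_1^{k_1}, \dots, g_m^{k_m}\}$ (with $\sum k_i = p$ and $\sum k_i g_i = h$), which is valid since each multiset corresponds to finitely many tuples, yields
\[
(a^p)_h = \sum_M \binom{p}{k_1, \dots, k_m}\, a_{g_1}^{k_1} \cdots a_{g_m}^{k_m}.
\]
The ``diagonal'' case ($m = 1$, $k_1 = p$) contributes $a_g^p$ when $h = pg$, and every ``off-diagonal'' multinomial coefficient ($m \ge 2$) is divisible by $p$, say $\binom{p}{k_1, \dots, k_m} = p \cdot d_{k_1, \dots, k_m}$ with $d \in \mathbb{Z}_{\ge 0}$.

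For part (1), I would factor $p$ out of the off-diagonal contribution to each $(a^p)_h$ and set
\[
c_h \defeq \sum_M d_{k_1, \dots, k_m}\, a_{g_1}^{k_1} \cdots a_{g_m}^{k_m} \in R_h,
\]
the sum over off-diagonal multisets $M$ with $\sum k_i g_i = h$; convergence of $c_h$ in $R_h$ follows from the same finiteness-mod-$I$ argument as for $(a^p)_h$. The key combinatorial observation is that for any fixed $I \in \mathcal{B}_R$, only finitely many $g \in G$ satisfy $a_g \notin I$, so only finitely many off-diagonal multisets of size $p$ contribute nontrivially modulo $I$, and hence only finitely many degrees $h$ satisfy $c_h \not\equiv 0 \pmod I$. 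This shows that $c \defeq \sum_h c_h$ converges in $R$, and by construction $a^p - \sum_g a_g^p = pc \in pR$, establishing (1).

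For part (2), if $pa = 0$ then uniqueness of topological homogeneous decomposition (\cref{const-comp}) applied to $0 = pa = \sum_g p a_g$ yields $p a_g = 0$ for every $g \in G$. Each off-diagonal term then becomes $d_{k_1, \dots, k_m} \cdot (p a_{g_1}) \cdot a_{g_1}^{k_1 - 1} a_{g_2}^{k_2} \cdots a_{g_m}^{k_m} = 0$ (using $k_1 \ge 1$), leaving only the diagonal contributions, so $a^p = \sum_g a_g^p$. The ``in particular'' clause then follows by restricting to $R_{\graded}$, in which every homogeneous decomposition is finite: (1) gives the graded ring homomorphism $R_{\graded}/pR_{\graded} \to (R_{\graded}/pR_{\graded})^{[p]}$, and (2) gives the graded multiplicative map $R_{\graded}[p] \to (R_{\graded}[p])^{[p]}$, which is well-defined into $R_{\graded}[p]$ because $p a^p = (pa)\, a^{p-1} = 0$. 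The main obstacle is the convergence step in part (1): without the combinatorial boundedness sketched above one would only conclude $a^p - \sum_g a_g^p \in \overline{pR}$ rather than in $pR$ itself, so the argument hinges on the fact that only finitely many multisets of size $p$ drawn from the finite set $\{g : a_g \notin I\}$ can contribute nonzero values at any single degree.
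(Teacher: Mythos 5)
Your proof is correct and takes essentially the same approach as the paper: multinomial expansion of $a^p$, the observation that every off-diagonal multinomial coefficient is divisible by $p$, and convergence controlled by the finiteness of $\{g : a_g \notin I\}$ for each open homogeneous ideal $I$. The paper packages the error term as a Cauchy net $\{b_I\}$ indexed by open ideals (working with finite truncations $a_I = \sum_{g \in G_I^a} a_g$ and the identity $a_I^p = a_I^{(p)} + p b_I$), whereas you index the same computation directly by degree $h$ via the topological homogeneous decomposition of $a^p$; this is a bookkeeping difference, not a genuinely different route.
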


\begin{proof}
We first prove (1).  
For each \(I \in \mathcal{B}_R\), let \(G_I^a \subseteq G\) be as in \cref{const-comp}.
Set
\[
a_I \defeq \sum_{g \in G_I^a} a_g,
\]
so that \(\lim_I a_I = a\).
Define
\[
a_I^{(p)} \defeq \sum_{g \in G_I^a} a_g^p,
\]
then \(\lim_I a_I^{(p)} = \sum_{g \in G} a_g^p\), since \(a_h \in I\) implies \(a_h^p \in I\).

Write \(G_I^a = \{g_1, \ldots, g_{r_I}\}\), and set
\[
b_I \defeq 
\sum_{\substack{0 \le \alpha_1, \ldots, \alpha_{r_I} \le p-1 \\ \alpha_1 + \cdots + \alpha_{r_I} = p}}
\frac{1}{p} \binom{p}{\alpha_1, \ldots, \alpha_{r_I}}
a_{g_{1}}^{\alpha_1} \cdots a_{g_{r_I}}^{\alpha_{r_I}}.
\]
Then we have \(a_I^p = a_I^{(p)} + p b_I\).
Moreover, by the definition of \(b_I\), for \(I, J \in \mathcal{B}_R\) with \(J \subseteq I\), we have
\[
b_J - b_I \in (\, a_h \mid h \in G_J^a \setminus G_I^a \,)
= (\, a_h \mid a_h \in I \setminus J \,) \subseteq I.
\]
Thus, \(\{b_I\}\) is a Cauchy net.
Therefore,
\[
a^p = \sum_{g \in G} a_g^p + p \lim_I b_I
  \equiv \sum_{g \in G} a_g^p \pmod{pR},
\]
as desired.

Next, we assume that $a$ and $p$ are topologically nilpotent to prove (2).
We first prove that each $a_g$ is topologically nilpotent for every $g \in G$.
Let $J \in \mathcal{B}_R$.
Since $a$ is topologically nilpotent, there exists an integer $n$ such that $a^{p^n} \in J$.
For every $g \in G$, we have 
\[
(a^{p^n})_{p^n g} \equiv a_g^{p^n} \pmod{pR}
\]
by (1). 
Since $J$ is homogeneous, we have $a_g^{p^n} \in (J,p)R$.
Since $p$ is topologically nilpotent, after replacing $n$ by a sufficiently large integer, we obtain $a_g^{p^n} \in J$.
This shows that $a_g$ is topologically nilpotent.
Furthermore, by the proof of (1), we have
\[
\frac{a^p - \sum_{g \in G} a_g^p}{p}
= \lim_{I} b_I.
\]
By construction, each $b_I$ is a sum of topologically nilpotent elements and is therefore itself topologically nilpotent for every $I \in \mathcal{B}_R$.
Hence, the limit $\lim_I b_I$ is also topologically nilpotent, as desired.

Finally, assume that \(a\) is \(p\)-torsion to prove (3).
Since \(a\) is annihilated by \(p\), we have
\[
0 = p a = p \sum_{g \in G} a_g = \sum_{g \in G} p a_g,
\]
and hence \(p a_g = 0\) for every \(g \in G\).
Thus \(a_I^p = a_I^{(p)}\) for every \(I \in \mathcal{B}_R\),
and taking the limit gives \(a^p = \sum_{g \in G} a_g^p\), as claimed.
\end{proof}


In what follows, we give equivalent conditions of (pro-)graded rings being (pro-)graded perfectoid rings.
One of the conditions is a graded variant of \Cref{EquivPerfectoid}.

\begin{proposition}\label{graded-perfectoid-equiv}
Let \((R, R_{\graded})\) be a pro-\(G\)-graded adic ring such that \(p\) is topologically nilpotent in \(R_{\graded}\).
Then the following conditions are equivalent:
\begin{enumerate}
    \item \((R, R_{\graded})\) is pro-\(G\)-graded perfectoid;
    \item The \(p\)-adically complete rings \(R\) and $R_0$ are perfectoid;
    \item there exists a topologically nilpotent element \(\varpi \in R_0\) with \(p \in \varpi^p R_0\) such that the Frobenius map
    \begin{equation} \label{IsomPPower}
        R_{\graded}/\varpi R_{\graded}
        \xrightarrow{\,a \mapsto a^p\,}
        R_{\graded}/\varpi^p R_{\graded}
    \end{equation}
    is an isomorphism, and the multiplicative map
    \begin{equation} \label{MultiplicativePPower}
        R_{\graded}[\varpi^\infty]
        \xrightarrow{\,a \mapsto a^p\,}
        R_{\graded}[\varpi^\infty]
    \end{equation}
    is bijective. 
\end{enumerate}
Furthermore, in either case, we obtain $G=G[1/p]$.
\end{proposition}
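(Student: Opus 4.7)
The plan is to prove the cycle $(1)\Rightarrow (2)\Rightarrow (3')\Rightarrow (3)\Rightarrow (1)$, extracting the divisibility $G=G[1/p]$ during the descent from $R$ to $R_{\graded}$. The implication $(3')\Rightarrow (3)$ is trivial, since any $\varpi\in R_0$ is a homogeneous element of degree~$0$. For $(1)\Rightarrow (2)$, the perfectoidness of $R_0$ yields by \Cref{BoundedTorsionPerfd}(1) an element $\varpi\in R_0$ admitting a compatible system of $p$-power roots with $p\in\varpi^p R_0$; this $\varpi$ is automatically topologically nilpotent. The perfectoidness of $R$ then follows by identifying $R$ with $R_{\graded}^{\wedge p}$, an identification guaranteed by gradedwise $p$-adic completeness of $R_{\graded}$ together with topological nilpotency of $p$ in $R_{\graded}$.

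The core step is $(2)\Rightarrow (3')$. Given $\varpi\in R_0$ with $p\in\varpi^p R_0$ and $R$ perfectoid, \Cref{ChoiceVarpi} applied to $R$ provides ring isomorphisms $R/\varpi R\xrightarrow{\sim}R/\varpi^p R$ and $R[\varpi^\infty]\xrightarrow{\sim}R[\varpi^\infty]$ via $a\mapsto a^p$. We descend each to $R_{\graded}$ using the topological homogeneous decomposition (\Cref{defn-homog-decomp}) and \Cref{p-th-power}: for $a=\sum_g a_g\in R$ one has $a^p\equiv\sum_g a_g^p\pmod{pR}$, and since $p\in\varpi^p R$ this congruence persists modulo $\varpi^p R$. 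To hit a homogeneous $b\in R_{\graded}$ of degree~$h$, lift $b$ to some $a\in R$ with $a^p\equiv b\pmod{\varpi^p R}$; the only component $a_g^p$ contributing to degree~$h$ is $a_{h/p}^p$. If $h\in pG$, the element $a_{h/p}$ serves as a graded preimage, whereas if $h\notin pG$, then $b\in\varpi^p R\cap R_{\graded}=\varpi^p R_{\graded}$ by \Cref{purity-homog-ideal}(1); iterating with topological nilpotency of $\varpi$ and gradedwise completeness forces $R_h=0$. This simultaneously delivers Frobenius surjectivity on $R_{\graded}$ and the divisibility $G=G[1/p]$ (under the convention that $G$ is generated by the support of $R_{\graded}$). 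Injectivity of Frobenius and bijectivity on $\varpi^\infty$-torsion follow by analogous descents, using \Cref{p-th-power}(2) on $p$-torsion elements (which coincide with $\varpi$-torsion by \Cref{torsion-compare} since $p\in\varpi^p R$) and \Cref{purity-homog-ideal}(1) for comparing closures.

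Finally, $(3)\Rightarrow(1)$ is established by applying \Cref{EquivPerfectoid} twice: to $R_{\graded}^{\wedge p}$ with the element $\varpi$ (the three conditions transfer from $R_{\graded}$ since $p\in\varpi^p R_{\graded}$ makes the $\varpi$-adic and $p$-adic topologies compatible), and to $R_0$ with an element obtained by using Frobenius surjectivity at degree~$0$ to extract a suitable pseudo-uniformizer out of the relation $p=\varpi^p y$. I expect the main technical obstacle to be the descent in $(2)\Rightarrow (3')$: translating ring-theoretic statements on the completion $R$ back to the structure ring $R_{\graded}$ requires careful bookkeeping with topological homogeneous decompositions, especially because $p$-th powers only collapse infinite sums into finite ones modulo $p$ (\Cref{p-th-power}), and because it is precisely in tracing where \Cref{purity-homog-ideal} recovers a homogeneous ideal from its closure in $R$ that the divisibility $G=G[1/p]$ must be teased out.
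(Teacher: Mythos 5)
Your cycle $(1)\Rightarrow(2)\Rightarrow(3')\Rightarrow(3)\Rightarrow(1)$ is a reasonable rearrangement of the paper's $(1)\Rightarrow(2)\Rightarrow(3)\Rightarrow(3')\Rightarrow(1)$, and your $(2)\Rightarrow(3')$ descent argument — including the explicit discussion of how $h\notin pG$ forces $R_h=0$ via $\varpi^p R\cap R_{\graded}=\varpi^p R_{\graded}$ and gradedwise separatedness — is correct and in fact spells out the origin of $G=G[1/p]$ more carefully than the paper does. Your $(3)\Rightarrow(1)$ sketch is thin, but it at least names the right ingredient (extracting a degree-zero pseudo-uniformizer $\varpi'=\varpi y$ from $p=\varpi^p x$ and semiperfectness, which is exactly the paper's $(3)\Rightarrow(3')$ step).

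There is, however, a genuine gap in your $(1)\Rightarrow(2)$. You claim $R$ can be \emph{identified} with $R_{\graded}^{\wedge p}$, saying this is "guaranteed by gradedwise $p$-adic completeness of $R_{\graded}$ together with topological nilpotency of $p$." That is false in general: in a pro-$G$-graded \emph{adic} ring the topology on $R_{\graded}$ is the $J$-adic topology for a finitely generated homogeneous ideal of definition $J$, and $R$ is the $J$-adic completion of $R_{\graded}$, which can be strictly larger than $R_{\graded}^{\wedge p}$ when $J\supsetneq(p)$ (gradedwise $p$-adic completeness does not make the $J$-adic and $p$-adic topologies agree). The paper handles this by invoking \Cref{BoundedTorsionPerfd}(3): the $J$-adic completion of the perfectoid ring $R_{\graded}^{\wedge p}$ is again perfectoid, and this $J$-adic completion does coincide with $R$ since $p$ is topologically nilpotent. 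Without this step your argument only proves $(1)\Rightarrow(2)$ in the special case of $p$-adic pro-$G$-graded rings, not for the general adic hypothesis of the proposition.
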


\begin{proof}
First, we prove $(1) \Rightarrow (2)$.
By \cref{BoundedTorsionPerfd}(3), $R$ is perfectoid.
Hence there exist $\varpi \in R$ and a unit $u \in R$ such that $p=\varpi^p u$.
Moreover, there exist $x,y \in R$ such that $u=x^p+py$.
Therefore,
\[
p=\varpi^p x^p+p\varpi^p y
 = a^p+p^2 u^{-1} y
\]
holds in \(R\), where we set $a\defeq \varpi x$.
In particular, $a$ is topologically nilpotent and so is \(a_0\) by \Cref{p-th-power}.

Taking the degree-$0$ part and using \cref{p-th-power}(1), we obtain
\[
p=a_0^p+pb_0+p^2(u^{-1}y)_0,
\]
where $b\defeq (a^p-\sum_{g \in G} a_g^p)/p$.
By \cref{p-th-power}(3), the element $b_0$ is topologically nilpotent.
Hence $1-b_0-p(u^{-1}y)_0$ is a unit, and therefore
\[
p(1-b_0-p(u^{-1}y)_0)^{-1}=a_0^p.
\]
Replacing $\varpi$ with $a_0$, we may assume that $\varpi \in R_0$.

Next, to prove that \(R_0\) is perfectoid, we prove that the maps \eqref{IsomPPower} and \eqref{MultiplicativePPower} are bijective.
Since $\varpi$ is topologically nilpotent in $R_0$, the ring $R$ is $\varpi$-adically complete.
Because $R$ is perfectoid and satisfies $p \in \varpi^p R$, \Cref{ChoiceVarpi} implies that the $p$-power maps
\[
R/\varpi R \xrightarrow{\,a \mapsto a^p\,} R/\varpi^p R
\quad \text{and} \quad
R[\varpi^\infty] \xrightarrow{\,a \mapsto a^p\,} R[\varpi^\infty]
\]
are isomorphisms.

We have a commutative diagram
\[
\begin{tikzcd}
R/\varpi R \arrow[r, "a \mapsto a^p", "\cong"'] & R/\varpi^p R \\
R_{\graded}/\varpi R_{\graded} \arrow[u, hook]
  \arrow[r, "a \mapsto a^p"]
& R_{\graded}/\varpi^p R_{\graded} \arrow[u, hook]
\end{tikzcd}
\]
whose vertical arrows are injective by \cref{purity-homog-ideal}.
Hence the induced map
\[
R_{\graded}/\varpi R_{\graded}
  \xrightarrow{\,a \mapsto a^p\,}
R_{\graded}/\varpi^p R_{\graded}
\]
is injective.

To prove surjectivity, let $a \in R_{\graded}$ be a homogeneous element of degree $h \in G$.
By \cref{p-th-power}, there exists $b=\sum_{g \in G} b_g \in R$ such that
\[
a \equiv b^p \equiv \sum_{g \in G} b_g^p
\pmod{\varpi^p R}.
\]
Comparing homogeneous components and using \cref{purity-homog-ideal}, we obtain $h/p \in G$ and
\[
a \equiv b_{h/p}^p
\pmod{\varpi^p R}.
\]
Since $b_{h/p} \in R_{\graded}$ and the map \eqref{IsomPPower} is graded, we conclude that
\[
R_{\graded}/\varpi R_{\graded}
\xrightarrow{\sim}
R_{\graded}/\varpi^p R_{\graded}
\]
is an isomorphism of $G$-graded rings.
In particular, this implies $G=G[1/p]$.

Moreover, since the maps \eqref{IsomPPower} and \eqref{MultiplicativePPower} are graded by \cref{p-th-power}, taking degree-$0$ parts yields bijections
\[
R_0/pR_0
\xrightarrow{\,a \mapsto a^p\,}
R_0/pR_0,
\qquad
R_0[p]
\xrightarrow{\,a \mapsto a^p\,}
R_0[p].
\]
By \cref{EquivPerfectoid}, the ring $R_0$ is perfectoid.
Thus (2) holds.

The implication $(2) \Rightarrow (3)$ follows from the same argument as in the proof of $(1) \Rightarrow (2)$.

Finally, we prove $(3) \Rightarrow (1)$.
Set $R'\defeq R_{\graded}^{\wedge p}$.
By \Cref{torsion-compare}, the bijectivity of \eqref{MultiplicativePPower} implies that $R_{\graded}$ has bounded $\varpi$-torsion.
By \cite[Lemma~3.16]{ishiro2025Perfectoida}, bounded $\varpi^\infty$-torsion in $R_{\graded}$ implies
\[
R'[\varpi^\infty]=R_{\graded}[\varpi^\infty].
\]
Hence the multiplicative $p$-power map on $R'[\varpi^\infty]$ is bijective.
Moreover, since
\[
R'/\varpi R' \simeq R/\varpi R,
\qquad
R'/\varpi^p R' \simeq R/\varpi^p R,
\]
we obtain an isomorphism
\[
R'/\varpi R'
\xrightarrow{\,a \mapsto a^p\,}
R'/\varpi^p R'.
\]
Therefore $R'$ is perfectoid by \Cref{EquivPerfectoid}, and thus (1) holds.

Furthermore, by the proof of $(1) \Rightarrow (2)$, we also obtain $G=G[1/p]$, as claimed.
\end{proof}

\begin{corollary}\label{except-for-p-comp}
Let \(R\) be a $p$-adically gradedwise complete \(G\)-graded ring.
Then the following conditions are equivalent:
\begin{enumerate}
    \item The ring \(R\) is graded perfectoid;
    \item the \(p\)-adic completion \(R^{\wedge p}\) and the degree-\(0\) part \(R_0\) are perfectoid;
    \item there exists a topologically nilpotent  element \(\varpi \in R_0\) with \(p \in \varpi^p R_0\) such that the Frobenius map
    \[
    R/\varpi R \xrightarrow{\,a \mapsto a^p\,} R/\varpi^p R
    \]
    is an isomorphism, and the multiplicative map
    \[
    R[\varpi^\infty] \xrightarrow{\,a \mapsto a^p\,} R[\varpi^\infty]
    \]
    is bijective.
\end{enumerate}
Furthermore, in either case, we have $G=G[1/p]$.
\end{corollary}

\begin{proof}
Equip \(R\) with the \(p\)-adic topology.
Then the pair \((R^{\wedge p}, R)\) forms a pro-\(G\)-graded \(p\)-adic ring.
Hence the assertion follows from \cref{graded-perfectoid-equiv}.
\end{proof}

\begin{remark} \label{RelationGR}
\Cref{graded-perfectoid-equiv} essentially follows from the proof of \cite[Proposition 16.6.1]{gabber2018Foundations}. For the convenience of the reader, we rewrite the argument in our terminologies.
\end{remark}

\begin{corollary} \label{PureSubPerfd}
Let \((R, R_{\graded})\) be a pro-\(G\)-graded perfectoid ring with a homogeneous ideal of definition \(I\).
Let \(H \subseteq G\) be a submonoid satisfying \(H = H[1/p]\), and set 
\[
R'_{\graded} \coloneqq \bigoplus_{h \in H} R_h \subseteq R_{\graded}.
\]
Then the pair \((R', R'_{\graded})\) is an \(H\)-graded perfectoid ring, where 
\(R'\) denotes the \((I \cap R'_{\graded})\)-adic completion of \(R'_{\graded}\).
\end{corollary}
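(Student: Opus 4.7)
The plan is to verify condition~(3') of \cref{graded-perfectoid-equiv} for $(R', R'_{\graded})$ and thereby conclude that it is pro-$H$-graded perfectoid. Applying \cref{graded-perfectoid-equiv} to $(R, R_{\graded})$ first produces a topologically nilpotent element $\varpi \in R_0$ with $p \in \varpi^p R_0$ for which the $G$-graded Frobenius $R_{\graded}/\varpi R_{\graded} \xrightarrow{a \mapsto a^p} R_{\graded}/\varpi^p R_{\graded}$ and the multiplicative map $R_{\graded}[\varpi^\infty] \xrightarrow{a \mapsto a^p} R_{\graded}[\varpi^\infty]$ are bijective; as a byproduct, $G = G[1/p]$.

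Next, since $R'_0 = R_0$, the same $\varpi$ belongs to $R'_0$ and still satisfies $p \in \varpi^p R'_0$. Because $\varpi$ has degree $0$, multiplication by powers of $\varpi$ preserves each graded component, so both $R_{\graded}/\varpi R_{\graded}$ and $R_{\graded}[\varpi^\infty]$ decompose as direct sums over $g \in G$, while the bijections above send the degree-$g$ piece to the degree-$pg$ piece. The hypothesis $H = H[1/p]$ makes $h \mapsto ph$ a bijection on $H$, so restricting these $G$-graded bijections to the $H$-graded subring yields
\[
R'_{\graded}/\varpi R'_{\graded} \xrightarrow{a \mapsto a^p} R'_{\graded}/\varpi^p R'_{\graded}
\qquad \text{and} \qquad
R'_{\graded}[\varpi^\infty] \xrightarrow{a \mapsto a^p} R'_{\graded}[\varpi^\infty],
\]
both bijective.

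To supply the remaining topological data, I would equip $R'_{\graded}$ with the $(I \cap R'_{\graded})$-adic topology. After replacing $I$ by $I + (\varpi)$, which does not alter the topology on $R_{\graded}$ since $\varpi$ is topologically nilpotent, we may assume $\varpi \in I \cap R'_{\graded}$; hence both $\varpi$ and $p$ are topologically nilpotent in $R'_{\graded}$. The gradedwise completeness of $R'_{\graded}$ is inherited from that of $R_{\graded}$ via the homogeneous decomposition argument of \cref{const-comp}, and by construction $R'$ is the completion, making $(R', R'_{\graded})$ a pro-$H$-graded adic ring with $p$ topologically nilpotent. Feeding all of this into condition~(3') of \cref{graded-perfectoid-equiv} then yields the desired pro-$H$-graded perfectoid structure.

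The main obstacle I expect is the topological bookkeeping rather than the algebraic transfer: namely, one must exhibit a finitely generated homogeneous ideal of definition for $R'_{\graded}$ so that it is genuinely $H$-graded adic in the sense of \cref{DefTopGraded}. The natural candidate is $\varpi R'_{\graded}$, but this forces a comparison between the $\varpi$-adic and $(I \cap R'_{\graded})$-adic topologies; this comparison should go through after choosing $I$ to contain $\varpi$ and exploiting that every generator of $I$ has a power landing in $\varpi R_{\graded}$ (a consequence of topological nilpotence together with $p \in \varpi^p R_0$). Once this topological compatibility is in hand, the graded-piece restriction argument of the second paragraph immediately finishes the proof.
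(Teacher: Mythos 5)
Your proposal follows essentially the same route as the paper's proof: both verify condition~(3') of \cref{graded-perfectoid-equiv} for $(R', R'_{\graded})$ by taking the degree-zero element $\varpi$ supplied by the same proposition applied to $(R, R_{\graded})$, observing that $p \in \varpi^p R_0 = \varpi^p R'_0$, and using $H = H[1/p]$ together with the gradedness of the two Frobenius-type maps to restrict the bijections from $G$-graded pieces to $H$-graded pieces. The paper records the intermediate identity $R'_{\graded}[\varpi] = R'_{\graded}[\varpi^\infty]$ explicitly (via \cref{torsion-compare}), which you leave implicit, but this is cosmetic.

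One caution about the last paragraph of your proposal: the claimed justification that ``every generator of $I$ has a power landing in $\varpi R_{\graded}$'' does not follow from topological nilpotence. Topological nilpotence of a generator $f_i$ with respect to the $I$-adic topology only gives $f_i^n \in I^m$, not $f_i^n \in \varpi R_{\graded}$; for instance, in a perfectoid ring with ideal of definition $(p, T)$ and $\varpi$ a $p$-power root of $p$, the element $T$ has no power lying in $(\varpi)$. So the $\varpi$-adic topology on $R'_{\graded}$ is in general strictly coarser than the $(I \cap R'_{\graded})$-adic one, and your proposed comparison does not close the gap you identify. That said, the paper itself treats this topological point tersely (it simply asserts gradedwise completeness of $R'_{\graded}$), so this does not distinguish your approach from the paper's; it just means the fix you sketch in that final paragraph is not the right one.
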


\begin{proof}
Since \(R'_{\graded}\) is gradedwise complete, the pair \((R', R'_{\graded})\) forms a pro-\(H\)-graded ring.
Because \((R, R_{\graded})\) is \(G\)-graded perfectoid, there exists an element 
\(\varpi \in R_0\) such that \(p \in \varpi^p R_{\graded}\), and \(R_{\graded}\) satisfies the conditions of \cref{graded-perfectoid-equiv}.
In particular, by \cref{torsion-compare} we have
\(R_{\graded}[\varpi] = R_{\graded}[\varpi^\infty]\),
and the same equality holds for \(R'_{\graded}\).

These properties imply that \(R'_{\graded}\) admits an element \(\varpi \in R_0\) satisfying \(p \in \varpi^p R'_{\graded}\),
and that the graded homomorphisms
\[
R'_{\graded}/\varpi R'_{\graded} 
    \xrightarrow{\,a \mapsto a^p\,} 
    (R'_{\graded}/\varpi^p R'_{\graded})^{[p]},
    \qquad
R'_{\graded}[\varpi] 
    \xrightarrow{\,a \mapsto a^p\,} 
    (R'_{\graded}[\varpi])^{[p]}
\]
are isomorphisms.
Here the assumption \(H = H[1/p]\) ensures that the grading is preserved under the \(p\)-th power map.

Since \(R'_{\graded}[\varpi] = R'_{\graded}[\varpi^\infty]\),
it follows from \cref{graded-perfectoid-equiv} that the pro-\(H\)-graded ring \((R', R'_{\graded})\) is \(H\)-graded perfectoid, as desired.
\end{proof}

Finally, we will show that graded perfectoid rings have the similar properties for non-graded perfectoid rings
The following twe lemmas are on adic completions and bounded torsions in graded perfectoid rings as in \Cref{BoundedTorsionPerfd}.

\begin{lemma} \label{CompatibleSystemModpGraded}
    Let \(R\) be a \(G\)-graded ring such that \(R\) is \(p\)-adically gradedwise complete and \(R/pR\) is semiperfect.
    Take a homogeneous element \(f\) of \(R\).
    Then there exists a homogeneous element \(g\) of \(R\) such that \(f \equiv g \bmod pR\) and \(g\) has a compatible system \(\{g^{1/p^n}\}_{n \geq 0}\) of homogeneous \(p\)-power roots of \(g\) in \(R\).
\end{lemma}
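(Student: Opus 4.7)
The plan is to follow the proof of the non-graded version (\cref{CompatibleSystemModp}) while tracking degrees, with the only new ingredient being the compatibility of the mod-$p$ Frobenius with the grading. If $f \in pR$ we may take $g = 0$, so we may assume $f \notin pR$, and set $d \defeq \deg(f)$.

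I would first construct, by induction on $i$, a sequence of homogeneous elements $f_0 = f, f_1, f_2, \ldots$ with $\deg(f_i) = d/p^i$ and $f_{i+1}^p \equiv f_i \pmod{pR}$. Given $f_i$, the semiperfection of $R/pR$ yields a (not necessarily homogeneous) lift $\tilde{f}_{i+1} \in R$ of a $p$-th root of the class of $f_i$. Writing $\tilde{f}_{i+1} = \sum_g (\tilde{f}_{i+1})_g$ as a finite sum in the graded ring $R$, the multinomial expansion (a special case of \cref{p-th-power}) gives
\[
\tilde{f}_{i+1}^p \equiv \sum_g (\tilde{f}_{i+1})_g^p \pmod{pR},
\]
and the right-hand side is a $G$-graded decomposition whose summands lie in degrees $pg$. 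Since $f_i$ is homogeneous of degree $d/p^i$, this forces $d/p^{i+1} \in G$, and $f_{i+1} \defeq (\tilde{f}_{i+1})_{d/p^{i+1}}$ is the desired homogeneous lift. Moreover, $f_{i+1} \notin pR$, for otherwise $f_i \equiv f_{i+1}^p \equiv 0 \pmod{pR}$, contradicting the induction hypothesis, so the induction continues.

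Next, I would show that for each $n \geq 0$ the sequence $(f_i^{p^{i-n}})_{i \geq n}$ lies in the single component $R_{d/p^n}$ and is Cauchy in the $p$-adic topology. Writing $f_{i+1}^p = f_i + p\epsilon_i$ and expanding $(f_i + p\epsilon_i)^{p^{i-n}}$ by the binomial theorem, the standard valuation estimate $v_p\!\left(\binom{p^{i-n}}{k} p^k\right) \geq i - n + 1$ for all $k \geq 1$ yields
\[
f_{i+1}^{p^{i+1-n}} \equiv f_i^{p^{i-n}} \pmod{p^{i-n+1}R_{d/p^n}}.
\]
Since $R$ is gradedwise $p$-complete, $R_{d/p^n}$ is $p$-adically complete, so the limit $g_n \defeq \lim_{i \to \infty} f_i^{p^{i-n}} \in R_{d/p^n}$ exists. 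By continuity of multiplication, $g_{n+1}^p = g_n$, and $g_0 \equiv f \pmod{pR}$ because the sequence $(f_i^{p^i})_i$ reduces modulo $p$ to the constant $\overline{f}$ by an easy induction. Setting $g \defeq g_0$ with $g^{1/p^n} \defeq g_n$ yields the required compatible system of homogeneous $p$-power roots.

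The main obstacle is the homogeneous lifting step at the start of each induction: one must extract a single homogeneous component of an arbitrary semiperfect lift and simultaneously verify that the relevant $p$-divided degree lies in $G$. Everything else is a direct graded refinement of the argument in \cref{CompatibleSystemModp}, carried out inside the $p$-adically complete component $R_{d/p^n}$.
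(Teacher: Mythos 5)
Your proof is correct and fills in exactly the details that the paper's one-line proof (``same as \Cref{CompatibleSystemModp} under taking care of the gradings'') leaves implicit; the homogeneous-lifting step via the mod-$p$ multinomial expansion, the resulting forced containment $d/p^{i+1} \in G$ when $f_i \notin pR$, and the Cauchy argument inside the complete component $R_{d/p^n}$ are all the intended ``care of the gradings.''
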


\begin{proof}
    This proof is the same as \Cref{CompatibleSystemModp} under taking care of the grading.
\end{proof}

\begin{lemma} \label{BoundedTorsionGradedPerfd}
    Let \((R, R_{gr})\) be a \(p\)-adic \(G\)-graded perfectoid ring and let \(I\) be a finitely generated homogeneous ideal of \(R_{\graded}\) containing \(p\).
    Then the following assertions holds.
    \begin{enumerate}
        \item There exists a homogeneous generator \(f_1, \dots, f_r\) of \(I\) such that \(f_i\) has a compatible system of \(p\)-power roots in \(R_{\graded}\).
        \item The \(G\)-graded ring \(R_{\graded}\) has bounded \(f_i^{\infty}\)-torsion for such choice of \(f_i\) in (1). In particular, \(R_{\graded}\) has bounded \(I^{\infty}\)-torsion.
        \item The pair \((\comp{I}{R}, \grcomp{I}{R_{\graded}})\) is an \(I\)-adic pro-\(G\)-graded perfectoid ring.
        \item The canonical morphism
        \begin{equation*}
            \dcomp{I}{R_{\graded}} \to \comp{I}{R_{\graded}}
        \end{equation*}
        is an isomorphism in \(\mcalD(R_{\graded})\), where \(\dcomp{I}{R_{\graded}}\) is the derived \(I\)-completion of \(R_{\graded}\).
    \end{enumerate}
\end{lemma}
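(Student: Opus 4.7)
For \textup{(1)}, I apply \Cref{CompatibleSystemModpGraded}, as in the non-graded argument of \Cref{BoundedTorsionPerfd}(1). Since \((R, R_{\graded})\) is graded perfectoid, \(R_{\graded}\) is gradedwise \(p\)-complete and \(R \defeq R_{\graded}^{\wedge p}\) is perfectoid. The bounded \(p^\infty\)-torsion of \(R\) from \Cref{BoundedTorsionPerfd}(2) restricts to the subring \(R_{\graded}\), and \Cref{purity-homog-ideal}(1) gives \(pR\cap R_{\graded}=pR_{\graded}\), so \(R_{\graded}/pR_{\graded}\simeq R/pR\), which is semiperfect. Thus the hypotheses of \Cref{CompatibleSystemModpGraded} are met. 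Starting from any finite homogeneous generating set of \(I\) (which we may arrange to include \(p\), since \(p\in I\)) and modifying by units congruent to \(1\) modulo \(p\) exactly as in the non-graded case, we obtain homogeneous generators \(f_1,\dots,f_r\) of \(I\) admitting compatible systems of homogeneous \(p\)-power roots in \(R_{\graded}\).

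For \textup{(2)}, I adapt the reducedness argument in \Cref{BoundedTorsionPerfd}(2) to the graded setting. Given \(x \in R_{\graded}\) with \(f_i^N x = 0\), the homogeneous roots \(f_i^{1/p^n} \in R_{\graded}\) from (1) yield
\[
\bigl((f_i^{1/p^n})^N x\bigr)^{p^n} = f_i^N x^{p^n} = 0.
\]
Since \(R_{\graded}\) embeds in the perfectoid (hence reduced) ring \(R\), it is reduced, so \((f_i^{1/p^n})^N x = 0\); taking \(n\) with \(p^n \geq N\) and multiplying by \((f_i^{1/p^n})^{p^n - N}\) gives \(f_i x = 0\). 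Thus \(R_{\graded}[f_i^\infty] = R_{\graded}[f_i]\), and bounded \(I^\infty\)-torsion of \(R_{\graded}\) follows from finite generation of \(I\) via a pigeonhole argument on monomials in the \(f_i\).

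For \textup{(3)}, I first check that \((\comp{I}{R}, \grcomp{I}{R_{\graded}})\) is a pro-\(G\)-graded adic ring: since \(R\) is \(p\)-complete and \(p \in I\), the classical \(I\)-completion satisfies \(\comp{I}{R} \simeq \comp{I}{R_{\graded}}\), which by \Cref{GradedwiseCompletion} coincides with the completion of the gradedwise completion \(\grcomp{I}{R_{\graded}}\). To establish perfectoidness I invoke condition~(2) of \Cref{graded-perfectoid-equiv}: the ring \(\comp{I}{R}\) is perfectoid by \Cref{BoundedTorsionPerfd}(3), and the element \(\varpi \in R_0\) with \(p \in \varpi^p R_0\) furnished by the graded perfectoid hypothesis on \((R,R_{\graded})\) lies in \((\grcomp{I}{R_{\graded}})_0\) and is topologically nilpotent in the \(I\)-adic topology, because \(\varpi^{pn} R_0 \ni p^n \in I^n\).

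Finally, \textup{(4)} is the standard fact that, for a finitely generated ideal \(I\) in a ring with bounded \(I^\infty\)-torsion — established in (2) — the classical \(I\)-adic completion coincides with the derived \(I\)-completion as an object of \(\mathcal{D}(R_{\graded})\); see \stacksproj{091T}. The main obstacle I expect is \textup{(2)}, where the graded structure, the homogeneous \(p\)-power roots from (1), and the reducedness of the ambient perfectoid ring \(R\) must all be combined; once this is in hand, parts \textup{(3)} and \textup{(4)} are essentially formal packaging within the pro-\(G\)-graded framework.
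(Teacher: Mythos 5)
Your parts (1)–(3) track the paper's proof closely, with (2) spelled out in more detail: the paper simply restricts the bounded $f_i^\infty$-torsion of the perfectoid ring $R$ to the subring $R_{\graded}$ via the injection $R_{\graded}\hookrightarrow R$ (so that it reduces immediately to \Cref{BoundedTorsionPerfd}(2)), whereas you re-run the reducedness argument inside $R_{\graded}$. Both are fine. Your (3) also correctly uses \Cref{graded-perfectoid-equiv}(2) together with \Cref{BoundedTorsionPerfd}(3), as the paper does.

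The gap is in (4). You assert that ``for a finitely generated ideal $I$ in a ring with bounded $I^\infty$-torsion, the classical $I$-adic completion coincides with the derived $I$-completion,'' citing \stacksproj{091T}. This is not a standard fact. For an ideal generated by several elements, the comparison between derived and classical $I$-completion requires the generating sequence $(f_1,\dots,f_r)$ to be \emph{weakly proregular}; bounded $f_i^\infty$-torsion of each $f_i$ in $R_{\graded}$ (or bounded $I^\infty$-torsion) does not by itself give weak proregularity, because after derived-completing along one variable you must control the torsion in the remaining ones, and that is not automatic for non-Noetherian rings. (The Stacks tag you cite concerns basic properties of derived complete modules, not this comparison; even the single-element case, e.g.\ \stacksproj{0BKF}, is formulated only for a principal ideal.) Indeed the paper's own proof of the non-graded prototype, \Cref{BoundedTorsionPerfd}(3), does \emph{not} deduce the comparison from bounded torsion alone but appeals to \cite[Prop.~2.1.11(e)]{CS2024Purity}, which uses the perfectoid structure. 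The paper's proof of (4) then factors through the $p$-adic completion to import exactly that result:
\[
\dcomp{I}{R_{\graded}} \cong \dcomp{I}{\dcomp{p}{R_{\graded}}} \cong \dcomp{I}{\comp{p}{R_{\graded}}} \cong \comp{I}{\comp{p}{R_{\graded}}} = \comp{I}{R_{\graded}},
\]
where the second isomorphism uses bounded $p^\infty$-torsion for the \emph{single} element $p$, and the third uses that $\comp{p}{R_{\graded}}=R^{\wedge p}$ is perfectoid together with \Cref{BoundedTorsionPerfd}(3). You should replace your argument for (4) with this factorization; otherwise the claim that bounded $I^\infty$-torsion alone suffices is unjustified.
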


\begin{proof}
    (1): This follows from \Cref{CompatibleSystemModpGraded}.
    
    (2): Since \(R_{gr} \hookrightarrow R\) is injective, this follows from \Cref{BoundedTorsionPerfd}(2).
    
    (3): The pair \((\comp{I}{R}, \grcomp{I}{R_{\graded}})\) is a pro-\(G\)-graded adic ring.
    Since \(\comp{I}{R}\) is a perfectoid ring by \Cref{BoundedTorsionPerfd}(3), this pair is an \(I\)-adic pro-\(G\)-graded ring by \Cref{graded-perfectoid-equiv}(2).

    (4): The morphism can be factorized to
    \begin{equation*}
        \dcomp{I}{R_{gr}} \cong \dcomp{I}{\dcomp{p}{R_{gr}}} \xrightarrow{\cong} \dcomp{I}{\comp{p}{R_{gr}}} \xrightarrow{\cong} \comp{I}{\comp{p}{R_{gr}}} = \comp{I}{R_{gr}},
    \end{equation*}
    where the second isomorphism follows from the boundedness of \(p^{\infty}\)-torsions in \(R_{gr}\) and the third one follows from \Cref{BoundedTorsionPerfd}(3) since \(\comp{p}{R_{gr}}\) is a perfectoid ring by \Cref{DefGradedPerfectoid}.
\end{proof}

Perfectoid rings have a canonical decomposition into the \(p\)-torsion-free part and the characteristic \(p\) part as in \cite[Remark 3.9]{bhatt2019Regular}.

\begin{proposition} \label{DecompositionPerfectoid}
  Let \(R\) be a \(G\)-graded perfectoid ring with a homogeneous ideal of definition \(I\) containing \(p\).
  Then we have a canonical isomorphism of \(G\)-graded rings
  \begin{equation*}
    R \xrightarrow{\cong} R_{\tf} \times_{(R_{\tf}/pR_{\tf})_{\perf}} (R/pR)_{\perf},
  \end{equation*}
  where \(R_{\tf} \defeq R/R[p^\infty]\) is the quotient of \(R\) by its \(p^{\infty}\)-torsion part and \((-)_{\perf}\) denotes the perfection.
\end{proposition}

\begin{proof}
  We first show that the natural map \(R_{\tf} \to (R^{\wedge p})_{\tf}\) is injective and can be identified with the \(p\)-adic completion of \(R_{\tf}\): As in the proof of \Cref{graded-perfectoid-equiv}, the equalities \(R[p^\infty] = R[p] = R^{\wedge p}[p] = R^{\wedge p}[p^\infty]\) hold and this shows the injectivity of the map \(R_{\tf} \to (R^{\wedge p})_{\tf}\).
  Taking the derived \(p\)-completion of the exact sequence \(0 \to R[p] \to R \to R_{\tf} \to 0\) of bounded \(p^\infty\)-torsion objects, we obtain an exact sequence
  \begin{equation*}
    0 \to R[p] \to R^{\wedge p} \to (R_{\tf})^{\wedge p} \to 0.
  \end{equation*}
  So we have \((R^{\wedge p})_{\tf} \cong (R_{\tf})^{\wedge p}\).

  Therefore, canonical morphisms gives a commutative diagram of rings
  \begin{center}
    \begin{tikzcd}
      R \arrow[d, hook] \arrow[r]     & R_{\tf} \times_{(R_{\tf}/pR_{\tf})_{\perf}} (R/pR)_{\perf} \arrow[d, hook] \\
      R^{\wedge p} \arrow[r, "\cong"] & (R^{\wedge p})_{\tf} \times_{(R_{\tf}/pR_{\tf})_{\perf}} (R/pR)_{\perf}   
    \end{tikzcd}
  \end{center}
  where the lower horizontal map is an isomorphism by \cite[Remark 3.9]{bhatt2019Regular} and the upper horizontal map is a morphism of \(G\)-graded rings, especially, injective.
  Considering the topological homogeneous decomposition of the inverse image of the image of an element in the right upper corner, we can see the surjectivity of the upper horizontal map.
  Then we obtain the desired isomorphism.
\end{proof}

\subsection{Graded Andr\'e's flatness lemma}

We will prove the graded variant of Andr\'e's flatness lemma.
To prove this, we use the notion of the \emph{\(p\)-root closure} \(C(R)\) of a \(p\)-torsion-free ring \(R\), which is the subset \(\set{x \in R[1/p]}{\exists n \in \setZ_{\geq 1}, \ x^{p^n} \in R}\).
Actually, this becomes a subring of \(R[1/p]\) following \cite{roberts2008Root}. See \cite{ishizuka2024Calculationa} for some history of root closure and its relationship to perfectoid theory.

In the first-named author's paper \cite{ishizuka2024Calculationa}, there was a non-trivial argument in the proof of main theorem.
Here, we fulfill the gap under a private communication with L\'eo Navarro Chafloque.

\begin{proposition}\label{perfectoidization-p-root}\textup{(cf. \cite[Theorem 1.3 and Corollary 5.9]{ishizuka2024Calculationa})}
    Let \(R\) be a \(p\)-torsion-free ring such that whose \(p\)-adic completion \(\widehat{R}\) is a semiperfectoid ring.
    Then \(R_{\perfd}\) is isomorphic to the \(p\)-adic completion \(C(R)^{\wedge}\) of the \(p\)-root closure \(C(R)\) of \(R\) in \(R[1/p]\).
\end{proposition}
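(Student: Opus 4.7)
The plan is to verify that \(C(R)^{\wedge}\) satisfies the universal property characterising the perfectoidization \(R_{\perfd}\). First I construct a canonical comparison morphism \(\Phi \colon C(R)^{\wedge} \to R_{\perfd}\); second I show \(C(R)^{\wedge}\) is itself perfectoid; a universal-property argument then produces an inverse to \(\Phi\).

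For the construction of \(\Phi\), since \(R\) is \(p\)-torsion-free, \Cref{DecompositionPerfectoidization} (applied with \(R' = R\)) implies \(R_{\perfd}\) is \(p\)-torsion-free, so it injects into \(R_{\perfd}[1/p]\). The composite \(R \to \widehat{R} \to R_{\perfd}\) therefore extends to \(R[1/p] \to R_{\perfd}[1/p]\). For each \(x \in C(R)\), the defining relation \(x^{p^n} \in R\) shows its image in \(R_{\perfd}[1/p]\) is integral over \(R_{\perfd}\); since a \(p\)-torsion-free perfectoid ring is integrally closed in its \(p\)-inverted localisation, the image lies in \(R_{\perfd}\). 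This yields a map \(C(R) \to R_{\perfd}\), which extends to \(\Phi\) by \(p\)-adic completeness of the target.

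To show \(C(R)^{\wedge}\) is perfectoid, I verify the criteria of \Cref{EquivPerfectoid} with a suitable element \(\varpi\). By \Cref{CompatibleSystemModp} applied to the semiperfect ring \(\widehat{R}/p\), there exists \(\pi \in \widehat{R}\) with \(p = \pi^p u\) for a unit \(u \in \widehat{R}\), and with a compatible system \(\{\pi^{1/p^n}\}_{n\geq 0}\) of \(p\)-power roots in \(\widehat{R}\). Approximating \(\pi\) \(p\)-adically by elements of \(R \subseteq C(R)\) gives a Cauchy net in \(C(R)\), whose limit defines a topologically nilpotent \(\varpi \in C(R)^{\wedge}\) with \(p \in \varpi^p C(R)^{\wedge}\) (after absorbing the lift of \(u\), which remains a unit by topological Nakayama). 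Surjectivity of the Frobenius map \(C(R)^{\wedge}/\varpi \to C(R)^{\wedge}/\varpi^p\) is established by the parallel approximation: given \(y \in C(R)\) with \(y^{p^n} \in R\), the semiperfectness of \(\widehat{R}/p\) supplies a \(p\)-th root of \(y\) modulo \(p\) in \(\widehat{R}\); the additional fact that \(y\) already has \(p^n\)-th roots in \(R[1/p]\) (namely \(y\) itself, by the very definition of the \(p\)-root closure applied to \(y^{p^n}\)) allows these roots to be chosen within \(C(R)^{\wedge}\) via compatible \(p\)-adic approximation of the whole root tower. Bijectivity of the \(p\)-power map on \(\varpi^\infty\)-torsion is automatic since \(C(R)^{\wedge}\) is \(p\)-torsion-free.

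Once \(C(R)^{\wedge}\) is shown perfectoid, the composite \(R \to \widehat{R} \to C(R)^{\wedge}\) factors uniquely through \(R_{\perfd}\) by its universal property, providing the inverse to \(\Phi\). The main obstacle is the Frobenius surjectivity step: one must arrange that the \(p\)-th roots modulo \(p\) guaranteed by semiperfectness of \(\widehat{R}\) descend to elements of \(C(R)^{\wedge}\), rather than merely to elements of the ambient perfectoid ring \(R_{\perfd}\). Reconciling the two \emph{a priori} different sources of roots—those coming from \(R[1/p]\) via the \(p\)-root closure, and those coming from the Frobenius on \(\widehat{R}/p\)—is precisely the subtle approximation issue that is the content of the gap filled by the communication with L\'eo Navarro Chafloque.
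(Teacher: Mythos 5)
Your proposal has a genuine gap that you yourself flag at the end: the Frobenius surjectivity of \(C(R)^{\wedge}/\varpi \to C(R)^{\wedge}/\varpi^p\). The explanation you give — that ``\(y\) already has \(p^n\)-th roots in \(R[1/p]\) (namely \(y\) itself\ldots)'' — does not make sense as written, and the ``compatible \(p\)-adic approximation of the whole root tower'' is precisely the step that would need a real argument: the \(p\)-th roots mod \(p\) supplied by semiperfectness of \(\widehat R/p\) live in \(\widehat R\), and there is no obvious reason for them to lie in \(C(R)^{\wedge}\) rather than in the ambient perfectoid ring. You are in effect trying to reprove the main theorem of \cite{ishizuka2024Calculationa} for a possibly incomplete \(R\), which is exactly the subtle case.

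The paper takes a different and much shorter route that sidesteps this issue entirely. It first applies \cite[Theorem 1.3 and Corollary 5.9]{ishizuka2024Calculationa} to the already-complete ring \(\widehat R\) (which is a \(p\)-torsion-free semiperfectoid ring and hence carries the required element \(\varpi\) with compatible \(p\)-power roots), obtaining \((\widehat R)_{\perfd}\cong C(\widehat R)^{\wedge}\), and observes \(R_{\perfd}\cong(\widehat R)_{\perfd}\) since perfectoid rings are \(p\)-adically complete. The remaining task is then to show \(C(R)^{\wedge}\cong C(\widehat R)^{\wedge}\), and this is done not by checking the perfectoid axioms for \(C(R)^{\wedge}\), but by showing that both rings are the \emph{initial object} in the full subcategory of \(p\)-torsion-free, \(p\)-adically complete, \(p\)-root-closed \(R\)-algebras. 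The key observation powering this is that \(p\)-root closedness of a \(p\)-torsion-free ring \(A\) is equivalent to the condition ``\(a^p=0\) in \(A/p^pA\) implies \(a=0\) in \(A/pA\)'', which makes it visibly stable under \(p\)-adic completion. This is a cleaner and more conceptual bridge between the complete and incomplete situations than the root-tower approximation you sketch.

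Two smaller issues. First, you invoke \Cref{DecompositionPerfectoidization} to deduce that \(R_{\perfd}\) is \(p\)-torsion-free; but with \(R'=R\) that proposition only yields a tautological fiber-product formula, not \(p\)-torsion-freeness. The correct reference for \(p\)-torsion-freeness of the perfectoidization of a \(p\)-torsion-free semiperfectoid ring is \cite[Lemma A.2]{ma2022Analogue} (which the paper does use in \Cref{gr-test-perfd}, but not in this proposition). Second, you claim a \(p\)-torsion-free perfectoid ring is integrally closed in its \(p\)-inverted localization; what is actually true and what you need is only \(p\)-root closedness, which suffices to extend \(C(R)\to R_{\perfd}\). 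The over-claim is harmless here but worth noting.
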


\begin{proof}
    First note that since \(\widehat{R}\) is a semiperfectoid ring, it has a compatible system of \(p\)-power roots \(\{\varpi^{1/p^n}\}_{n \geq 0}\) of \(\varpi\) such that \(p\) is some unit multiple of \(\varpi^p\) in \(\widehat{R}\).
    In this case, we can deduce that \((\widehat{R})_{\perfd}\) is isomorphic to \(C(\widehat{R})^{\wedge}\) by \cite[Theorem 1.3 and Corollary 5.9]{ishizuka2024Calculationa}.
    Moreover, since any perfectoid rings are \(p\)-adically complete, \((\widehat{R})_{\perfd}\) is the initial perfectoid ring over \(R\) and therefore we have \(R_{\perfd} \cong C(\widehat{R})^{\wedge}\).
    On the other hand, we can show that both \(C(R)^{\wedge}\) and \(C(\widehat{R})^{\wedge}\) are the initial object in the full subcategory of \(R\)-algebras such that \(p\)-torsion-free, \(p\)-adically complete, and \(p\)-root closed.
    Note that \(p\)-root closedness of a \(p\)-torsion-free ring \(A\) is equivalent to that if \(a \in A\) satisfies \(a^p = 0\) in \(A/p^pA\), then \(a = 0\) in \(A/pA\). Especially the \(p\)-root closed property is stable under taking \(p\)-adic completion.
\end{proof}

\begin{corollary} \label{perfectoidization-general}
    Let \(R\) be a ring such that whose \(p\)-adic completion \(\widehat{R}\) is a semiperfectoid ring and let \(R' \defeq R/R[p^\infty]\) be the quotient of \(R\) by its \(p^{\infty}\)-torsion part.
    Then the perfectoidization \(R \to R_{\perfd}\) can be identified with
    \begin{equation*}
        R \to C(R')^{\wedge} \times_{(C(R')/pC(R'))_{\perf}} (R/pR)_{\perf},
    \end{equation*}
    where \(C(R')\) is the \(p\)-root closure of the \(p\)-torsion-free ring \(R'\) in \(R'[1/p]\) and \((-)_{\perf}\) denotes the perfection. 
\end{corollary}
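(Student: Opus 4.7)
The plan is to combine the two preceding results, \Cref{DecompositionPerfectoidization} and \Cref{perfectoidization-p-root}, by verifying that the hypotheses of the latter apply to the ring \(R'\) rather than to \(R\) itself.

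First, I would verify the hypotheses. By construction \(R' = R/R[p^\infty]\) is \(p\)-torsion-free, so the first condition of \Cref{perfectoidization-p-root} is clear. Next I would check that \(\widehat{R'}\) is semiperfectoid. Since \(\widehat{R}\) is semiperfectoid, there exists a perfectoid ring \(P\) together with a surjection \(P \twoheadrightarrow \widehat{R}\). The surjection \(R \twoheadrightarrow R'\) induces a surjection \(R/pR \twoheadrightarrow R'/pR' = \widehat{R'}/p\widehat{R'}\), and since the composite \(P \to \widehat{R} \to R/pR = \widehat{R}/p\widehat{R}\) is surjective, the induced map \(P/pP \to \widehat{R'}/p\widehat{R'}\) is surjective as well. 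As both \(P\) and \(\widehat{R'}\) are \(p\)-adically complete, the composite \(P \to \widehat{R'}\) is surjective, so \(\widehat{R'}\) is semiperfectoid.

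With the hypotheses in place, \Cref{perfectoidization-p-root} applied to \(R'\) gives an isomorphism \((R')_{\perfd} \cong C(R')^{\wedge}\). Meanwhile \Cref{DecompositionPerfectoidization}, applied to \(R\) (using that \(R_{\perfd}\) exists because \(\widehat{R}_{\perfd}\) exists and is \(p\)-adically complete, hence serves as the initial perfectoid over \(R\)), yields
\[
R_{\perfd} \;\cong\; (R')_{\perfd} \times_{((R')_{\perfd}/p(R')_{\perfd})_{\perf}} (R/pR)_{\perf}.
\]
Substituting \((R')_{\perfd} \cong C(R')^{\wedge}\) and using the canonical identification \(C(R')^{\wedge}/pC(R')^{\wedge} \cong C(R')/pC(R')\), together with the fact that perfection depends only on the \(\F_p\)-algebra structure, produces the desired identification
\[
R_{\perfd} \;\cong\; C(R')^{\wedge} \times_{(C(R')/pC(R'))_{\perf}} (R/pR)_{\perf}.
\]

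The main obstacle is the verification that \(\widehat{R'}\) is semiperfectoid, since the natural map \(\widehat{R} \to \widehat{R'}\) need not be literally surjective (the kernel \(R[p^\infty]\) is not a priori Mittag--Leffler for the \(p\)-adic filtration). The trick is to bypass this by working modulo \(p\): one constructs the perfectoid covering of \(\widehat{R'}\) not from \(\widehat{R} \twoheadrightarrow \widehat{R'}\), which may fail, but by lifting surjectivity modulo \(p\) through \(p\)-adic completeness of both \(P\) and \(\widehat{R'}\). Once this is settled the rest is a direct bookkeeping combination of the two cited propositions.
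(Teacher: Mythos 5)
Your proposal is correct and takes essentially the same route as the paper. The paper's own proof is simply a one-line assertion that $R'$ is $p$-torsion-free with semiperfectoid $p$-adic completion, then cites \Cref{perfectoidization-p-root} and \Cref{DecompositionPerfectoidization}; you fill in the verification that $\widehat{R'}$ is semiperfectoid by the mod-$p$ lifting argument, which is exactly the right way to sidestep the potential failure of $\widehat{R}\to\widehat{R'}$ to be surjective.
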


\begin{proof}
    This follows from \Cref{perfectoidization-p-root} and \Cref{DecompositionPerfectoidization} since \(R'\) is a \(p\)-torsion-free ring whose \(p\)-adic completion is semiperfectoid.
\end{proof}

\begin{lemma} \label{Graded-pRootClosure}
We assume $G$ is totally ordered which commutes with additions.
Let \(R_{\graded}\) be a \(G\)-graded ring. Then there exists a \(G\)-grading on the \(p\)-root closure \(C(R_{\graded})\) of \(R_{\graded}\) in \(R_{\graded}[1/p]\) such that \(R_{\graded} \hookrightarrow C(R_{\graded}) \hookrightarrow R_{\graded}[1/p]\) is a sequence of \(G\)-graded morphisms.
\end{lemma}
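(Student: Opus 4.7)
The plan is to exhibit $C(R_{\graded})$ as a graded subring of the naturally graded ring $R_{\graded}[1/p]$. Since $p \in (R_{\graded})_0$, the localization $R_{\graded}[1/p]$ inherits a canonical $G$-grading for which the inclusion $R_{\graded} \hookrightarrow R_{\graded}[1/p]$ is graded. By \cite{roberts2008Root}, $C(R_{\graded})$ is a subring of $R_{\graded}[1/p]$, so it suffices to prove the following key claim: for every $x \in C(R_{\graded})$, writing $x = \sum_{g \in G} x_g$ for the (finite) homogeneous decomposition of $x$ in $R_{\graded}[1/p]$, each $x_g$ belongs to $C(R_{\graded})$.

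I would prove this claim by induction on the number $k$ of nonzero components $x_g$. The base $k=1$ is trivial. For $k \ge 2$, let $g_1 < g_2 < \cdots < g_k$ be the degrees appearing, and choose $n \ge 1$ with $x^{p^n} \in R_{\graded}$. Expanding by the multinomial theorem,
\[
x^{p^n} = \sum_{\alpha_1 + \cdots + \alpha_k = p^n} \binom{p^n}{\alpha_1,\ldots,\alpha_k}\, x_{g_1}^{\alpha_1} \cdots x_{g_k}^{\alpha_k},
\]
and the term indexed by $(\alpha_1,\ldots,\alpha_k)$ is homogeneous of degree $\alpha_1 g_1 + \cdots + \alpha_k g_k$. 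The crucial use of the hypothesis that $G$ is totally ordered with order-compatible addition is that if $(\alpha_1,\ldots,\alpha_k) \neq (p^n,0,\ldots,0)$, then some $\alpha_j > 0$ for $j > 1$, hence
\[
\alpha_1 g_1 + \cdots + \alpha_k g_k = p^n g_1 + \sum_{j \ge 2}\alpha_j(g_j - g_1) > p^n g_1.
\]
Thus the degree-$(p^n g_1)$ component of $x^{p^n}$ is exactly $x_{g_1}^{p^n}$. Since $x^{p^n} \in R_{\graded}$ and $R_{\graded}$ is graded, this component lies in $R_{\graded}$, so $x_{g_1} \in C(R_{\graded})$. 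Then $x - x_{g_1} \in C(R_{\graded})$ (as $C(R_{\graded})$ is a subring) and has only $k-1$ nonzero homogeneous components, so the induction hypothesis yields $x_{g_j} \in C(R_{\graded})$ for $j \ge 2$ as well.

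From this claim the result is immediate: defining $C(R_{\graded})_g \defeq C(R_{\graded}) \cap R_{\graded}[1/p]_g$, we get $C(R_{\graded}) = \bigoplus_{g \in G} C(R_{\graded})_g$, and the multiplicative structure restricts correctly because it does so on $R_{\graded}[1/p]$. Both inclusions $R_{\graded} \hookrightarrow C(R_{\graded}) \hookrightarrow R_{\graded}[1/p]$ then preserve homogeneous components by construction, hence are $G$-graded morphisms.

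The main (and only nontrivial) obstacle is the isolation-of-minimum-degree step: without the total ordering, a sum of higher-degree terms could cancel against $x_{g_1}^{p^n}$, and one could no longer recover $x_{g_1}^{p^n}$ as a single homogeneous component of $x^{p^n}$. This is exactly the reason the hypothesis on $G$ appears in the statement. Everything else is formal bookkeeping once Roberts' ring structure on $C(R_{\graded})$ is invoked.
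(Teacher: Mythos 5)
Your proof is correct and follows essentially the same approach as the paper's: induction on the number of nonzero homogeneous components, isolating the extremal-degree part (you use the minimum degree $g_1$, the paper uses the maximum $g_0$, an inessential difference) and using Roberts' ring structure on $C(R_{\graded})$ to subtract it off.
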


\begin{proof}
    We prove that the $p$-root closure $C(R_{\graded})$ of $R_{\graded}$ is the $G$-graded subring of $R_{\graded}[1/p]$.
    Take $a \in C(R_{\graded})$ and the homogeneous decomposition $a=\sum a_g$ in $R_{\graded}[1/p]$. 
    We prove $a_g \in C(R_{\graded})$ for every $g \in G$ by induction on the number of the set $G_a\defeq\{g \in G \mid a_g \neq 0\}$.
    There exists an integer $n \geq 1$ such that $a^{p^n} \in R_{\graded}$.
    Take the maximum element $g_0$ in $G_a$, then the degree-$p^ng_0$ part of $a^{p^n}$ is $a_{g_0}^{p^n}$, thus $a_{g_0}^{p^n} \in R_{\graded}$.
    In conclusion, we have $a_{g_0} \in C(R_{\graded})$.
    Therefore, we have $a'\defeq a-a_{g_0} \in C(R_{\graded})$ and $\# G_a =\# G_{a'}+1$.
    By the induction hypothesis, we have $a_g \in C(R_{\graded})$ for every $g \in G$, as desired.
\end{proof}


In semiperfectoid case, the universal perfectoid ring exists by \cite{bhatt2022Prismsa}.
The same story holds for the graded case (\Cref{perfectoidization-graded}).
To show this, we define the notion of graded semiperfectoid rings:

\begin{definition} \label{DefSemiperfectoidGraded}
We define a graded variant of semiperfectoid rings.
\begin{itemize}
    \item Let $R$ be a $G$-graded ring.
    We say that $R$ is \emph{graded semiperfectoid} if there exist a $G$-graded perfectoid $P$ and a graded surjective ring homomorphism $P \to R$. 
    \item Let $(R,R_{\graded})$ be a pro-\(G\)-graded adic ring.
    We say that $(R,R_{\graded})$ is a \emph{pro-$G$-graded semiperfectoid ring} if there exists a \(G\)-graded perfectoid ring \((P, P_{\graded})\) and a $G$-graded ring homomorphism \((P, P_{\graded}) \to (R, R_{\graded})\) such that $P \to R$ is surjective.
\end{itemize}
\end{definition}

\begin{remark} \label{RemarkSemiperfd}
    By \cref{SurjectiveProGraded}, for a pro-\(G\)-graded semiperfectoid ring \((R, R_{\graded})\) above, the homomorphism $P_{\graded} \to R_{\graded}$ is also surjective, and especially, \(R_0\) is a semiperfectoid ring and $R_{\graded}$ is graded semiperfectoid.
\end{remark}

\begin{remark} \label{BSPerfdGradedRings}
    Take a \(G\)-graded ring \(R\) and its \(p\)-adic completion \(R^{\wedge p}\).
    Assume that \(R\) admits a graded ring homomorphism \(P \to R\) from a \(G\)-graded perfectoid ring \(P\).
    Then \(R\) (resp., \(R^{\wedge p}\)) has a ring homomorphism \(P_0 \to R\) (resp., \(P^{\wedge p} \to R^{\wedge p}\)) from a perfectoid ring \(P_0\) (resp., \(P^{\wedge p}\)) by the definition of graded perfectoid rings (\Cref{DefGradedPerfectoid}).
    Then we can take the \emph{perfectoidization} \(R_{\perfd}\) (resp., \((R^{\wedge p})_{\perfd}\)) of \(R\) (resp., \(R^{\wedge p}\)) in the sense of \cite{bhatt2022Prismsa} and they are the same thing because of \cite[Proposition 8.5]{bhatt2022Prismsa}.
    Especially, if \(R\) is a graded semiperfectoid ring, the perfectoidization \(R_{\perfd} \cong (R^{\wedge p})_{\perfd}\) is the initial perfectoid \(R\)-algebra by \cite[Corollary 7.3]{bhatt2022Prismsa} and has an explicit representation (\Cref{perfectoidization-general}).

    In the case of pro-\(G\)-graded ring \((R, R_{\graded})\), the same story holds: If \((R, R_{\graded})\) has a morphism from a pro-\(G\)-graded perfectoid ring \((P, P_{\graded})\), we can take the perfectoidization \(R_{\perfd}\) and \((R_{\graded})_{\perfd}\) of \(R\) and \(R_{\graded}\) in the sense of \cite{bhatt2022Prismsa} respectively.
    Assuming that \((R, R_{\graded})\) is semiperfectoid, then we can show that \(R_{\perfd}\) (resp., \((R_{\graded})_{\perfd}\)) is the initial (\(p\)-adic) perfectoid \(R\)-algebra (resp., \(R_{\graded}\)-algebra).
\end{remark}

A priori, even if a graded ring \(R\) admitting a morphism from a graded perfectoid ring has \(R_{\perfd}\) which is concentrated in degree \(0\), this \(R_{\perfd}\) may not have a suitable graded structure.
However, if we consider graded semiperfectoid rings, the next theorem states that a natural graded structure exists on it in the pro-graded sense.

\begin{theorem} \label{perfectoidization-graded}
We assume $G$ is totally ordered which commutes with additions.
Let \((R,R_{\graded})\) be a \(G\)-graded semiperfectoid ring.
Then there exists a \(G\)-graded perfectoid ring \((R, R_{\graded})_{\perfd}\) over \((R, R_{\graded})\) which is the initial \(G\)-graded perfectoid ring over \((R, R_{\graded})\). We call this the \emph{perfectoidization} of \((R, R_{\graded})\).

Explicitly, the complete part of \((R, R_{\graded})_{\perfd}\) is the \(I\)-adic completion of the (\(p\)-adic) perfectoidization \(R_{\perfd}\) of \(R\), where \(I\) is a homogeneous ideal of definition of \(R_{\graded}\) containing \(p\).
If \(R\) is \(p\)-torsion free, it is given by the pair \((C(R_{\graded})^{\wedge I}, C(R_{\graded})^{\grwedge})\) where \(C(R_{\graded})\) is the \(p\)-root closure of \(R_{\graded}\) in \(R_{\graded}[1/p]\) following \Cref{Graded-pRootClosure}.
\end{theorem}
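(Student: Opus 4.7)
The plan is to construct the graded perfectoidization explicitly when $R$ is $p$-torsion-free, using the graded $p$-root closure from \Cref{Graded-pRootClosure}, and then handle the general case via the fiber product decomposition of \Cref{perfectoidization-general}. Fix a homogeneous ideal of definition $I \subseteq R_{\graded}$ containing $p$; this exists because the graded surjection $P \twoheadrightarrow R$ from a $G$-graded perfectoid ring $P$ supplies a homogeneous element $\varpi \in R_0$ with $p \in \varpi^p R$, as follows from \Cref{graded-perfectoid-equiv}, and in particular forces $G = G[1/p]$.

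Assume first that $R$ is $p$-torsion-free (and hence so is $R_{\graded}$). The compatible system $\{\varpi^{1/p^n}\}_{n \ge 0}$ in $P_0$ pushes forward to a compatible system of homogeneous elements in $R_0 \subseteq R_{\graded} \subseteq R_{\graded}[1/p]$ lying in $C(R_{\graded})$, which is $G$-graded by \Cref{Graded-pRootClosure}. Endow $C(R_{\graded})$ with the $I$-adic topology induced from $R_{\graded}$ and define
\[
(R, R_{\graded})_{\perfd} \defeq \bigl(C(R_{\graded})^{\wedge I},\ C(R_{\graded})^{\grwedge}\bigr),
\]
which is a pro-$G$-graded adic ring by \Cref{GradedwiseCompletion}. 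I would then verify condition~(3$'$) of \Cref{graded-perfectoid-equiv} with the element $\varpi \in R_0$: the Frobenius $C(R_{\graded})^{\grwedge}/\varpi \to C(R_{\graded})^{\grwedge}/\varpi^p$ is an isomorphism because the existence of a compatible system of $p$-power roots of $\varpi$, combined with the definition of $p$-root closure and the relation $p \in \varpi^p R$, forces surjectivity, and injectivity follows from $p$-torsion-freeness; the $\varpi^\infty$-torsion condition is vacuous for the same reason. The identification of the ordinary completion with $R_{\perfd}^{\wedge I}$ then follows from \Cref{perfectoidization-p-root} since both satisfy the same universal property as initial $I$-adically complete, $p$-torsion-free, $p$-root closed $R$-algebras.

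For the general case, observe that $R_{\graded}[p^\infty]$ is a homogeneous ideal of $R_{\graded}$, so $R_{\graded}' \defeq R_{\graded}/R_{\graded}[p^\infty]$ is a $p$-torsion-free graded semiperfectoid ring, while $(R_{\graded}/pR_{\graded})_{\perf}$ inherits a natural $G$-grading (using $G = G[1/p]$). Assembling these two pieces via the graded analogue of the fiber product appearing in \Cref{perfectoidization-general} yields the desired $(R, R_{\graded})_{\perfd}$, whose ordinary completion recovers $R_{\perfd}^{\wedge I}$. For the universal property, let $(S, S_{\graded})$ be any pro-$G$-graded perfectoid ring under $(R, R_{\graded})$; then $S_{\graded}$ is itself $p$-root closed thanks to \Cref{CompatibleSystemModpGraded} applied degreewise together with uniqueness of $p$-power roots in the $p$-torsion-free part of $S_{\graded}$, so the graded map $R_{\graded} \to S_{\graded}$ extends uniquely to $C(R_{\graded}) \to S_{\graded}$, and then to the gradedwise completion by continuity. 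The main obstacle I anticipate is the comparison in the $p$-torsion-free case between the classical $I$-adic completion $C(R_{\graded})^{\wedge I}$ and the Bhatt--Scholze perfectoidization $R_{\perfd}^{\wedge I}$, which requires carefully tracking the commutation of the (graded) $p$-root closure with $I$-adic completion and invoking the boundedness of $\varpi^\infty$-torsion from \Cref{BoundedTorsionGradedPerfd}(2) to reduce the classical and derived completions to the same object.
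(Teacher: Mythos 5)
Your plan mirrors the paper's proof at a structural level — both reduce to the $p$-torsion-free quotient, use the graded $p$-root closure from \Cref{Graded-pRootClosure} plus the fiber-product decomposition of \Cref{perfectoidization-general}, and then pass from $p$-adic to $I$-adic topology via the boundedness results of \Cref{BoundedTorsionGradedPerfd}. However, there are two genuine gaps.

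First, in the $p$-torsion-free step, you propose to verify condition~(3$'$) of \Cref{graded-perfectoid-equiv} directly. This is substantially harder than you indicate, and the justifications given do not close it. Surjectivity of the Frobenius $C(R_{\graded})^{\grwedge}/\varpi \to C(R_{\graded})^{\grwedge}/\varpi^p$ does not follow merely from ``the existence of a compatible system of $p$-power roots of $\varpi$ and the definition of $p$-root closure'': what you need is that $C(R_{\graded})/pC(R_{\graded})$ is semiperfect, and the canonical map $R_{\graded}/pR_{\graded}\to C(R_{\graded})/pC(R_{\graded})$ is generally not surjective, so this does not reduce to semiperfectness of $R/pR$. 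Likewise, injectivity of this Frobenius is exactly the statement that the \emph{completion} $C(R_{\graded})^{\grwedge}$ is $p$-root closed, which is a content-bearing assertion that $p$-torsion-freeness alone does not give. The paper avoids all of this: it invokes \Cref{perfectoidization-p-root} as a black box to conclude that the $p$-adic completion is already the perfectoid ring $R_{\perfd}$, and then applies condition~(2) of \Cref{except-for-p-comp}, which only asks for a degree-$0$ topologically nilpotent $\varpi$ with $p\in\varpi^pR_0$ — something immediately supplied by the surjection from the graded perfectoid $P$. You should replace your $(3')$-verification with this argument.

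Second, your universal-property argument does not work in the general (non-$p$-torsion-free) case. You claim that since $S_{\graded}$ is $p$-root closed, the graded map $R_{\graded}\to S_{\graded}$ extends uniquely to $C(R_{\graded})\to S_{\graded}$. But $C(R_{\graded})$ only makes sense as an $R_{\graded}$-algebra inside $R_{\graded}[1/p]$, and when $R$ has $p$-torsion the relevant object is $C(R'_{\graded})$ for the torsion-free quotient $R'_{\graded}$, sitting as one factor of the fiber product
\[
R_{\perfd,\graded}\cong C(R'_{\graded})^{\grwedge}\times_{(C(R'_{\graded})/p)_{\perf}}(R_{\graded}/p)_{\perf}.
\]
The universal property of $R_{\perfd,\graded}$ must then be deduced from the universal properties of the two factors and compatibility over the common target, not from $p$-root closure of $S_{\graded}$ alone. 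Moreover, the extension from $R_{\graded}[1/p]$ to $S_{\graded}$ requires $S_{\graded}$ to be $p$-torsion-free, which is not automatic. The paper sidesteps this by observing that its graded fiber product already has $p$-adic completion equal to the \emph{initial} perfectoid $(R_{\graded})_{\perfd}$ from \cite[Corollary~7.3]{bhatt2022Prismsa}, and deduces initiality in the graded category from that. I recommend following that route rather than attempting a direct extension argument.
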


\begin{proof}
Fix a homogeneous ideal of definition \(I\) of \(R_{\graded}\) and a surjective morphism \((P, P_{\graded}) \to (R, R_{\graded})\) from a \(G\)-graded perfectoid ring \((P, P_{\graded})\). Since \(p\) is topologically nilpotent in \(P_{\graded}\), so is in \(R_{\graded}\) and then we can assume that \(I\) contains \(p\).
Take the \(p\)-torsion-free quotient \(R'_{\graded} \defeq R_{\graded}/R_{\graded}[p^\infty]\). Since \(R_{\graded}[p^{\infty}]\) is the kernel of the graded morphism \(R_{\graded} \to R_{\graded}[1/p]\), it is a homogeneous ideal of \(R_{\graded}\) and then \(R'_{\graded}\) is a \(G\)-graded semiperfectoid ring.

By \Cref{BSPerfdGradedRings} and \Cref{perfectoidization-general}, the \(G\)-graded semiperfectoid ring \(R_{\graded}\) admits the perfectoidization \((R_{\graded})_{\perfd}\), which is isomorphic to the one \((R^{\wedge p})_{\perfd}\) of the semiperfectoid ring \(R^{\wedge p}\) and it can be written as
\begin{align} \label{PerfectoidizationGradedEq}
    (R_{\graded})_{\perfd} & \cong C(R'_{\graded})^{\wedge p} \times_{(C(R'_{\graded})/pC(R'_{\graded}))_{\perf}} (R_{\graded}/pR_{\graded})_{\perf} \\
    & \cong (C(R'_{\graded}) \times_{(C(R'_{\graded})/pC(R'_{\graded}))_{\perf}} (R_{\graded}/pR_{\graded})_{\perf})^{\wedge p} \nonumber
\end{align}
as \(R\)-algebras.
We can equip \(C(R'_{\graded})\) with a \(G\)-graded \(R'_{\graded}\)-algebra structure by \Cref{Graded-pRootClosure}. Moreover, since the Frobenius morphism on \(G\)-graded \(\setF_p\)-algebra can be seen as a \(G\)-graded morphism as in \Cref{graded-witt-tilt}(1), the perfection \((-)_{\perf}\) of a \(G\)-graded ring has a natural \(G\)-grading.
Set an \(R\)-algebra
\begin{equation} \label{GradedStructurePerfd}
    R_{\perfd, \graded} \defeq C(R'_{\graded})^{\grwedge} \times_{(C(R'_{\graded})/pC(R'_{\graded}))_{\perf}} (R_{\graded}/pR_{\graded})_{\perf},
\end{equation}
where \(C(R'_{\perfd})^{\grwedge}\) is the \(p\)-adic gradedwise completion.
Then \(R_{\perfd, \graded}\) is the pullback of \(p\)-adically gradedwise complete \(G\)-graded rings along \(G\)-graded morphisms, we can equipped with a natural \(G\)-graded \(R_{\graded}\)-algebra structure, which is again \(p\)-adically gradedwise complete.
Using this \(G\)-grading, \Cref{except-for-p-comp} says that \(R_{\perfd, \graded}\) is a \(G\)-graded perfectoid ring and then we can take a pro-\(G\)-graded \(p\)-adic perfectoid ring \(((R_{\graded})_{\perfd}, R_{\perfd, \graded})\) over \((R_{\graded}^{\wedge p}, R_{\graded})\).

We will show that this \(((R_{\graded})_{\perfd}, R_{\perfd, \graded})\) is the initial pro-\(G\)-graded \(p\)-adic perfectoid ring over \((R_{\graded}^{\wedge p}, R_{\graded})\).
Take any pro-\(G\)-graded \(p\)-adic perfectoid ring \((P, P_{\graded})\) over \((R_{\graded}^{\wedge p}, R_{\graded})\).
By construction, \(R_{\graded}^{\wedge p} \to P\) uniquely factors through the perfectoidization \((R_{\graded})_{\perfd}\).
Since \(R_{\graded}^{\wedge p}\) is a semiperfectoid ring, the morphism \(R_{\graded}^{\wedge p} \to (R_{\graded})_{\perfd}\) is surjective (\cite[Theorem 7.4]{bhatt2022Prismsa}).
Together with \Cref{SurjectiveProGraded}, we have a commutative diagram
\begin{center}
  \begin{tikzcd}
    R_{\graded} \arrow[dd, hook] \arrow[rd, two heads] \arrow[rr] &                                                & P_{\graded} \arrow[dd, hook] \\
                                                                  & {R_{\perfd, \graded}} \arrow[dd, hook]         &                              \\
    R_{\graded}^{\wedge p} \arrow[rr] \arrow[rd, two heads]       &                                                & P                            \\
                                                                  & (R_{\graded})_{\perfd} \arrow[ru, "\exists !"] &                             
  \end{tikzcd}
\end{center}
of ring homomorphisms.
The morphism \((R_{\graded})_{\perfd} \to P\) uniquely induces a graded morphism \(R_{\perfd, \graded} \to P_{\graded}\): Take any homogeneous element \(x \in R_{\perfd, \graded}\) with a homogeneous lift \(\widetilde{x} \in R_{\graded}\).
The image \(y \in P_{\graded}\) of \(\widetilde{x}\) is uniquely determined since its image in \(P\) is determined by the image of \(x\) along \(R_{\perfd, \graded} \hookrightarrow (R_{\graded})_{\perfd} \to P\).
Therefore, we can get the unique morphism \(((R_{\graded})_{\perfd}, R_{\perfd, \graded}) \to (P, P_{\graded})\) of pro-\(G\)-graded \(p\)-adic perfectoid rings over \((R_{\graded}^{\wedge p}, R_{\graded})\).
So this is the initial pro-\(G\)-graded \(p\)-adic perfectoid ring over \((R_{\graded}^{\wedge p}, R_{\graded})\).

By \Cref{BoundedTorsionGradedPerfd}(3), the \(I\)-adic completion \((\comp{I}{(R_{\graded})_{\perfd}}, \grcomp{I}{R_{\perfd, \graded}})\) of \(((R_{\graded})_{\perfd}, R_{\perfd, \graded})\) is the initial pro-\(G\)-graded \(I\)-adic perfectoid ring over \((R, R_{\graded})\).
Therefore, when we set \((R, R_{\graded})_{\perfd}\) as this, we obtain a pro-$G$-graded ring \((R, R_{\graded})_{\perfd}\) and this is the initial \(G\)-graded perfectoid ring over \((R, R_{\graded})\).

Since \(\comp{I}{(R_{\graded})_{\perfd}}\) is the initial \(I\)-adic complete perfectoid \(R\)-algebra, this is isomorphic to the \(I\)-adic completion \((R_{\perfd})^{\wedge I}\) of the perfectoidization \(R_{\perfd}\) of \(R\).
Moreover, if \(R\) is \(p\)-torsion free, then \((R_{\graded})_{\perfd}\) (resp., \(R_{\perfd, \graded}\)) is isomorphic to \(C(R_{\graded})^{\wedge p}\) (resp., \(C(R_{\graded})\)) by \eqref{PerfectoidizationGradedEq} (resp., \eqref{GradedStructurePerfd}), and then \((R, R_{\graded})_{\perfd}\) can be given by \((C(R_{\graded})^{\wedge I}, C(R_{\graded})^{\grwedge})\) as claimed.
\end{proof}

\begin{corollary}\label{gr-perfdion}
Let \(R\) be a \(G\)-graded semiperfectoid ring.
Then there exists a \(G\)-graded perfectoid ring \(R_{\perfd,{\gr}}\) over \(R\),
which is the initial \(G\)-graded perfectoid over \(R\) and its \(p\)-adic completion coincides with \(R_{\perfd}\).
\end{corollary}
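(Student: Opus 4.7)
The plan is to deduce this directly from \Cref{perfectoidization-graded} applied in the \(p\)-adic setting, and then to translate the resulting pro-graded perfectoidization back to the purely graded world using the categorical equivalence of \Cref{first-cat-equiv}.

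First, I would equip \(R\) with the \(p\)-adic topology. Because \(R\) is \(G\)-graded semiperfectoid, fix a graded surjection \(P \twoheadrightarrow R\) from a \(G\)-graded perfectoid ring \(P\). By \Cref{except-for-p-comp} we may pick a topologically nilpotent \(\varpi \in P_0\) with \(p \in \varpi^p P_0\); pushing forward shows that its image \(\bar\varpi \in R_0\) is topologically nilpotent with \(p \in \bar\varpi^p R_0\), so \((p)\) is a legitimate homogeneous ideal of definition on \(R\). To feed this into the pro-graded framework of \Cref{perfectoidization-graded}, whose input is required to have a gradedwise complete structure graded ring, I would replace \(R\) by its gradedwise \(p\)-adic completion \(\widetilde R\); the surjection factors as \(P \twoheadrightarrow R \to \widetilde R\), and since \(P\) is itself gradedwise \(p\)-complete the composition remains a graded surjection, so \((\widetilde R^{\wedge p}, \widetilde R)\) is a pro-\(G\)-graded \(p\)-adic semiperfectoid ring in the sense of \Cref{DefSemiperfectoidGraded}.

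Now I would apply \Cref{perfectoidization-graded} with \(I = (p)\). This yields a pro-\(G\)-graded \(p\)-adic perfectoid ring \((\widetilde R^{\wedge p}, \widetilde R)_{\perfd} = (A, R_{\perfd,\gr})\) that is initial among such, and by the explicit description in the theorem the topological part \(A\) is the \((p)\)-adic completion of the Bhatt--Scholze perfectoidization \(R_{\perfd}\); since \(R_{\perfd}\) is already \(p\)-adically complete this topological part is literally \(R_{\perfd}\). I then set \(R_{\perfd,\gr}\) to be the structure graded ring of this pro-graded perfectoidization: \Cref{first-cat-equiv} immediately gives that \(R_{\perfd,\gr}\) is a \(G\)-graded perfectoid ring with \((R_{\perfd,\gr})^{\wedge p} \simeq R_{\perfd}\). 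Initiality among \(G\)-graded perfectoid \(R\)-algebras translates directly through the same equivalence: any such algebra \(S\) is gradedwise \(p\)-complete, so the graded map \(R \to S\) extends uniquely to \(\widetilde R \to S\), and \((S^{\wedge p}, S)\) becomes a pro-\(G\)-graded \(p\)-adic perfectoid over \((\widetilde R^{\wedge p}, \widetilde R)\); the initiality in \Cref{perfectoidization-graded} then produces the unique graded morphism \(R_{\perfd,\gr} \to S\) extending \(R \to S\).

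The whole argument is essentially a dictionary translation and no step poses a serious obstacle. The one potentially delicate point is the replacement of \(R\) by its gradedwise \(p\)-adic completion \(\widetilde R\); this is harmless because gradedwise \(p\)-completion is left adjoint to the inclusion of gradedwise \(p\)-complete \(G\)-graded rings, so the category of \(G\)-graded perfectoid \(R\)-algebras coincides with that of \(G\)-graded perfectoid \(\widetilde R\)-algebras, and the universal property transfers without loss.
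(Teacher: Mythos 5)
Your proof is correct and follows essentially the same strategy as the paper: equip with the $p$-adic topology, pass to the pro-$G$-graded setting, apply \Cref{perfectoidization-graded}, and translate back via the equivalence of \Cref{first-cat-equiv}. The one place where you diverge is your explicit intermediate step of replacing $R$ by its gradedwise $p$-adic completion $\widetilde R$ before invoking \Cref{perfectoidization-graded}. The paper's proof simply asserts that $(R^{\wedge p}, R)$ is a pro-$G$-graded semiperfectoid ring, which per \Cref{def-pro-gra} requires the graded part to be gradedwise complete; but the definition of a graded semiperfectoid ring (\Cref{DefSemiperfectoidGraded}) only says $R$ is a graded quotient of a graded perfectoid ring, and a quotient of a gradedwise $p$-complete ring by a homogeneous ideal need not remain gradedwise $p$-complete. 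You correctly identify this potential gap and fill it: $\widetilde R$ is gradedwise complete by construction, $P \twoheadrightarrow R \to \widetilde R$ stays surjective degree-wise because $P_g \to R_g$ surjective forces $P_g = (P_g)^{\wedge p} \to (R_g)^{\wedge p}$ surjective, and initiality transfers because gradedwise $p$-completion is left adjoint to the inclusion of gradedwise $p$-complete rings while every graded perfectoid ring lies in that subcategory. So where the paper implicitly assumes (or implicitly performs) this reduction, you make it explicit, which is the safer route. The remaining identification $R_{\perfd} \simeq (\widetilde R)_{\perfd}$ that you use silently is justified by \Cref{BSPerfdGradedRings}, since the Bhatt--Scholze perfectoidization depends only on the $p$-adic completion.
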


\begin{proof}
Since \(R\) is graded semiperfectoid, there exists a \(G\)-graded perfectoid ring \(P\)
and a surjective graded ring homomorphism \(P \to R\).
Equipping \(P\) and \(R\) with the \(p\)-adic topology,
we obtain a surjection
\[
P^{\wedge p} \longrightarrow R^{\wedge p}
\]
from the pro-\(G\)-graded \(p\)-adic perfectoid pair \((P^{\wedge p}, P)\).
In particular, the pair \((R^{\wedge p}, R)\) is a pro-\(G\)-graded semiperfectoid.

By \cref{perfectoidization-graded}, there exists a perfectoidization
\[
(R^{\wedge p}, R)_{\perfd} = (R_{\perfd}, R_{\perfd, \graded})
\]
of \((R^{\wedge p}, R)\) by using (\ref{GradedStructurePerfd}).
Then \(R_{\perfd,\gr}\) is the initial \(G\)-graded perfectoid over \(R\)
by \cref{first-cat-equiv}, as desired.
\end{proof}

Using this existence, we will prove a graded variant of Andr\'e's flatness lemma (\Cref{GradedAndre}).
Before that, we will prepare some lemma.

\begin{lemma} \label{PolynomialRingPerfectoid}
    Let \((R, R_{\graded})\) be a \(G\)-graded perfectoid ring with a homogeneous ideal of definition \(I\) containing \(p\). Assume \(G = G[1/p]\).
    Set a polynomial ring \(R_{\graded}[T]\) with the \(G\)-grading given by \(\deg(T) = h\) for some \(h \in G\).
    Then the pair \((R[T^{1/p^\infty}]^{\wedge I}, R_{\graded}[T^{1/p^\infty}]^{\grwedge})\) is a \(G\)-graded perfectoid ring where \(R[T^{1/p^\infty}]^{\wedge I}\) is the \(I\)-adic completion of \(R[T^{1/p^\infty}]\) and \(R_{\graded}[T^{1/p^\infty}]^{\grwedge}\) is the \(I\)-adic gradedwise completion of \(R[T^{1/p^\infty}]\).
\end{lemma}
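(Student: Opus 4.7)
The plan is to apply condition~(2) of \Cref{graded-perfectoid-equiv}. First, I would check that the pair \((R[T^{1/p^\infty}]^{\wedge I}, R_{\graded}[T^{1/p^\infty}]^{\grwedge})\) is a pro-\(G\)-graded adic ring with \(p\) topologically nilpotent in the graded part. The assumption \(G = G[1/p]\) is exactly what makes \(R_{\graded}[T^{1/p^\infty}]\) a bona fide \(G\)-graded \(R_{\graded}\)-algebra, with each \(T^{1/p^n}\) in degree \(h/p^n \in G\) and finitely generated homogeneous ideal of definition \(I \cdot R_{\graded}[T^{1/p^\infty}]\); a direct degreewise calculation shows that the topological completion of the gradedwise completion recovers \(R[T^{1/p^\infty}]^{\wedge I}\). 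For the degree-\(0\) element required by~(2), I would take the \(\varpi \in R_0\) from the graded perfectoid structure of \((R, R_{\graded})\); since \(R_0\) embeds into the degree-\(0\) part of \(R_{\graded}[T^{1/p^\infty}]^{\grwedge}\), the conditions \(p \in \varpi^p R_0\) and topological nilpotence of \(\varpi\) transport to the polynomial extension.

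The substantive step is to verify that \(R[T^{1/p^\infty}]^{\wedge I}\) is perfectoid. I would check the three criteria of \Cref{EquivPerfectoid} with the same \(\varpi\): the \(\varpi\)-adic completeness follows from \(R\) being \(p\)-adically complete together with the bounded \(p^\infty\)-torsion of \(R[T^{1/p^\infty}]\), which is inherited from \(R\) via \Cref{BoundedTorsionPerfd}(2) and the freeness of \(R[T^{1/p^\infty}]\) over \(R\) in the \(T\)-variable. For the Frobenius isomorphism modulo \(\varpi^p\), I would observe that the target has characteristic \(p\) (since \(p \in \varpi^p R\)), so cross terms in \((\sum_s a_s T^s)^p\) all vanish and the Frobenius factors as the composite of the isomorphism \(R/\varpi \xrightarrow{\sim} R/\varpi^p\) (from \(R\) being perfectoid) with the bijection \(s \mapsto ps\) on the \(p\)-power root exponents. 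The same cross-term vanishing applied to \(p\)-torsion elements, combined with \Cref{torsion-compare}, then yields the bijectivity of the multiplicative \(p\)-th power map on \(\varpi^\infty\)-torsion.

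The main obstacle is the technical compatibility between the \(I\)-adic completion and the Frobenius/torsion identifications inside \(R[T^{1/p^\infty}]^{\wedge I}\). Concretely, one must show that \(R[T^{1/p^\infty}]^{\wedge I}/\varpi^k\) is naturally the \(I\)-adic completion of \((R/\varpi^k)[T^{1/p^\infty}]\) for \(k \in \{1, p\}\), and that the \(\varpi^\infty\)-torsion of \(R[T^{1/p^\infty}]^{\wedge I}\) is described degreewise in terms of \(R[\varpi]\). Both points rest crucially on the bounded \(\varpi^\infty\)-torsion of \(R\) from \Cref{BoundedTorsionPerfd}(2), which is what prevents the \(I\)-adic completion from introducing new torsion or destroying the expected quotient structure and so makes the cross-term argument above go through cleanly.
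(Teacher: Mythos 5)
Your overall frame matches the paper: you apply condition~(2) of \Cref{graded-perfectoid-equiv}, check the pro-\(G\)-graded structure, and transport the degree-zero element \(\varpi \in R_0\) through the inclusion \(R_0 \subseteq (R_{\graded}[T^{1/p^\infty}]^{\grwedge})_0\). That part is identical to the paper's argument.

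Where you diverge is the "substantive step" of showing that \(R[T^{1/p^\infty}]^{\wedge I}\) is perfectoid. The paper handles this in two citations: first, \(R[T^{1/p^\infty}]^{\wedge p}\) is perfectoid because \(R\) is perfectoid and adjoining \(p\)-power roots of a variable and \(p\)-completing preserves perfectoidness (this is a standard fact and is cited ``as usual''); second, \Cref{BoundedTorsionPerfd}(3) says that for a perfectoid ring and a finitely generated ideal containing \(p\), the \(I\)-adic completion is again perfectoid, so \(R[T^{1/p^\infty}]^{\wedge I} = (R[T^{1/p^\infty}]^{\wedge p})^{\wedge I}\) is perfectoid. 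Your proposal instead re-derives perfectoidness from scratch by hand-checking the three criteria of \Cref{EquivPerfectoid} for the \(I\)-adic completion. This is a genuinely different route, and in principle it can work, but it forces you to confront precisely the technical issues that \Cref{BoundedTorsionPerfd}(3) was designed to package: \(\varpi\)-adic completeness of the \(I\)-adic completion (not automatic, since the \(\varpi\)-adic topology is coarser than the \(I\)-adic one; you need derived completeness plus bounded \(\varpi^\infty\)-torsion to pass to classical completeness), and the compatibility of \(I\)-adic completion with modding out by \(\varpi^k\). You flag these as the "main obstacle" and assert that bounded torsion resolves them, which is true in substance, but as written this is the most delicate part of the argument and is left unverified—whereas the paper simply never has to enter this territory. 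If you want to keep your route, the cleanest fix is to first observe that \(R[T^{1/p^\infty}]^{\wedge p}\) is perfectoid (either by the standard fact or by running your cross-term argument at the \(p\)-adic level, where the completion–quotient compatibilities are more straightforward) and only then invoke \Cref{BoundedTorsionPerfd}(3) to pass from the \(p\)-adic to the \(I\)-adic completion; that recovers the paper's proof and avoids re-proving the completion lemma inline.
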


\begin{proof}
    First of all, the pair \((R[T^{1/p^\infty}]^{\wedge I}, R_{\graded}[T^{1/p^\infty}]^{\grwedge})\) forms a pro-\(G\)-graded ring.
    Since \(R\) is a perfectoid ring by \Cref{graded-perfectoid-equiv}, the \(p\)-adic completion \(R[T^{1/p^\infty}]^{\wedge p}\) is a perfectoid ring as usual.
    Then \Cref{BoundedTorsionPerfd}(3) shows that the \(I\)-adic completion \(R[T^{1/p^\infty}]^{\wedge I}\) is also a perfectoid ring.
    Moreover, again \Cref{graded-perfectoid-equiv} implies that there exists a topologically nilpotent \(\varpi\) of \(R_0\) such that \(p \in \varpi^pR_0\).
    Since the degree-\(0\) part of \(R_{\graded}[T^{1/p^\infty}]\) contains \(R_0\), this element \(\varpi\) satisfies the same condition for \(R_{\graded}[T^{1/p^\infty}]^{\grwedge}\). Then the pair \((R[T^{1/p^\infty}]^{\wedge I}, R_{\graded}[T^{1/p^\infty}]^{\grwedge})\) satisfies all the conditions in \Cref{graded-perfectoid-equiv}.
    \qedhere
\end{proof}

\begin{definition} \label{DefCompletelyFFlat}
    A morphism \((\varphi, \varphi_{\graded}) \colon (R, R_{\graded}) \to (S, S_{\graded})\) of pro-\(G\)-graded adic rings is called \emph{completely faithfully flat} if the graded morphism \(\varphi_{\graded} \colon R_{\graded} \to S_{\graded}\) is an adic\footnote{A morphism of adic rings \(f \colon A \to B\) is called \emph{adic} if \(f(I)B\) becomes an ideal of definition of \(B\) for some (or equivalently any) ideal definition of \(A\).} and \(I\)-completely faithfully flat morphism for some (or equivalently any) homogeneous ideal of definition \(I\) of \(R_{\graded}\).
\end{definition}

\begin{lemma} \label{EquivCFFlat}
    Take a morphism \((\varphi, \varphi_{\graded}) \colon (R, R_{\graded}) \to (S, S_{\graded})\) of pro-\(G\)-graded perfectoid rings and take a homogeneous ideal of definition \(I\) of \(R_{\graded}\).
    Assume that \(\varphi_{\graded}\) is an adic morphism of \(G\)-graded adic rings.
    Then \(\varphi_{\graded} \colon R_{\graded} \to S_{\graded}\) is \(I\)-completely faithfully flat if and only if the morphism \(\varphi \colon R \to S\) is \(IR\)-completely faithfully flat.
\end{lemma}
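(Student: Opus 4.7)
The plan is to show that both notions of complete faithful flatness are equivalent to the same down-to-earth condition, namely that the reduction modulo \(I\) is faithfully flat. Since \(\varphi_{\graded}\) is assumed adic, \(IS_{\graded}\) is a homogeneous ideal of definition of \(S_{\graded}\), and consequently \(IS\) generates the topology on \(S\) in the pro-graded sense. Fix a finitely generated homogeneous ideal of definition \(I\) of \(R_{\graded}\); since \(p\) is topologically nilpotent in \(R_{\graded}\), I may and will enlarge \(I\) to assume \(p \in I\), so that \Cref{BoundedTorsionGradedPerfd} is applicable.

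The first step is the standard reduction of \(I\)-complete (faithful) flatness to an honest flatness statement modulo \(I\). For a morphism \(A \to B\) of classically \(I\)-complete rings with bounded \(I^\infty\)-torsion and \(I\) finitely generated, \(I\)-complete faithful flatness is equivalent to the condition that \(B/I^nB\) is faithfully flat over \(A/I^n\) for every \(n \ge 1\) (or equivalently, already for \(n = 1\)). By \Cref{BoundedTorsionGradedPerfd}(2) applied to \(R_{\graded}\) and \(S_{\graded}\), and by \cite[Lemma~3.16]{ishiro2025Perfectoida} (or \Cref{BoundedTorsionPerfd}(2)) applied to the perfectoid rings \(R\) and \(S\), all four rings have bounded \(I^\infty\)-torsion, so this reduction is available on both sides of the equivalence.

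The second step is to identify these mod-\(I^n\) reductions across the graded and the pro-graded settings. By \Cref{remk-first-pro-gra} we already have canonical isomorphisms \(R/\overline{I^nR} \simeq R_{\graded}/I^n\) and \(S/\overline{I^nS} \simeq S_{\graded}/I^n S_{\graded}\), where the closures are taken in the pro-graded topologies. It then suffices to show \(I^nR = \overline{I^nR}\) in \(R\) and \(I^nS = \overline{I^nS}\) in \(S\). For this, note that by \Cref{BoundedTorsionGradedPerfd}(3) the ring \(R\) is itself a (classically) \(I\)-adically complete perfectoid ring; consequently, the quotient \(R/I^nR\) is discrete in the \(I\)-adic topology (hence Hausdorff), so \(I^nR\) is closed, and the same argument applies to \(I^nS\subseteq S\). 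Therefore the morphisms \(R_{\graded}/I^n \to S_{\graded}/I^n S_{\graded}\) and \(R/I^n R \to S/I^n S\) are literally identical.

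Combining the two steps, \(\varphi_{\graded}\) is \(I\)-completely faithfully flat if and only if \(R_{\graded}/I^n \to S_{\graded}/I^nS_{\graded}\) is faithfully flat for all \(n\), which (via the identifications above) is exactly the condition that \(\varphi\) is \(IR\)-completely faithfully flat. The principal obstacle is Step~2: the pro-graded topology on \(R\) is a priori \(\{\overline{I^n R}\}\), not \(\{I^n R\}\), so the coincidence of these topologies—i.e.\ the closedness of \(I^n R\)—is what makes the identification work. This closedness rests precisely on the perfectoid structure recorded in \Cref{BoundedTorsionGradedPerfd}(3), and once it is in hand the rest of the argument is bookkeeping.
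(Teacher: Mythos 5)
Your Step 2 is correct and captures a genuine technical point: by \Cref{BoundedTorsionGradedPerfd}(3) the complete part \(R\) is itself \(I\)-adically complete, so \(I^nR\) is closed and the quotients \(R_{\graded}/I^n\) and \(R/I^nR\) (and similarly for \(S\)) can be identified. That part of the bookkeeping is fine and is implicitly used in the paper as well.

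The problem is Step~1. The reformulation you assert — that for classically \(I\)-complete rings with bounded \(I^\infty\)-torsion, \(I\)-complete faithful flatness is equivalent to faithful flatness of \(B/I^nB\) over \(A/I^n\) for all \(n\), ``or equivalently, already for \(n=1\)'' — is not a standard fact and the \(n=1\) version is simply false. Take \(A=\Z_p\), \(I=(p)\), \(B=\Z_p\times\F_p\): then \(B\) is \(p\)-complete with bounded \(p^\infty\)-torsion and \(B/pB\cong\F_p\times\F_p\) is faithfully flat over \(\F_p\), yet \(B\otimes^L_A A/p\) has \(H^{-1}\cong B[p]=\F_p\neq 0\), so \(B\) is not \(p\)-completely flat over \(A\). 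The version quantified over all \(n\) is not clearly false, but it is not a theorem you can cite, it requires an argument, and you do not give one. Recall also that \(I\)-complete flatness is a derived condition (concentration of \(B\otimes^L_A A/I\) in degree \(0\)), so the forward implication to the mod-\(I^n\) statements is fine, but the converse — which is exactly what your proof needs in one direction — is the hard part. There is an additional mismatch: you invoke the criterion with the hypothesis ``classically \(I\)-complete,'' but \(R_{\graded}\) and \(S_{\graded}\) are only gradedwise complete, not \(I\)-adically complete, so the hypothesis does not apply on the graded side even if the criterion were correct.

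The paper avoids this entirely by never leaving the derived world: using \Cref{BoundedTorsionGradedPerfd}(4), one has \(\dcomp{I}{R_{\graded}}\cong R\) and \(\dcomp{I}{S_{\graded}}\cong S\), and then since \(R_{\graded}/I\) is already derived \(I\)-complete and derived completion is compatible with \(\otimes^L\) in the relevant sense, one directly identifies \(S\otimes^L_R R/IR\cong S_{\graded}\otimes^L_{R_{\graded}} R_{\graded}/I\) as complexes. The two notions of \(I\)-complete faithful flatness are then literally tested on the same object, with no detour through mod-\(I^n\) flatness. If you want to salvage your approach you would need to prove the ``all \(n\)'' criterion (for the specific rings appearing here, perhaps using the perfectoid structure), but it is cleaner and safer to argue as the paper does.
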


\begin{proof}
    Since \(\varphi_{\graded}\) is a \(G\)-graded adic morphism, the extension \(J \defeq \varphi_{\graded}(I)S_{\graded}\) is a homogeneous ideal of definition of \(S_{\graded}\).
    Because of the isomorphism \(R_{\graded}/I \cong R/IR\), it suffices to show that \(S \otimes^L_R R/IR\) is isomorphic to \(S_{\graded} \otimes^L_{R_{\graded}} R_{\graded}/I\).
    Since \((R, R_{\graded})\) and \((S, S_{\graded})\) are pro-\(G\)-graded perfectoid rings, especially they are \(p\)-adic pro-\(G\)-graded perfectoid rings.
    Then \Cref{BoundedTorsionGradedPerfd}(4) gives isomorphisms
    \begin{equation*}
        \dcomp{I}{R_{\graded}} \xrightarrow{\cong} \comp{I}{R_{\graded}} = R \quad \text{and} \quad \dcomp{I}{S_{\graded}} = \dcomp{J}{S_{\graded}} \xrightarrow{\cong} \comp{J}{S_{\graded}} = S.
    \end{equation*}
    Using this, we can show that
    \begin{equation*}
        S \otimes^L_R R/IR \cong \dcomp{I}{S_{\graded}} \otimes^L_{\dcomp{I}{R_{\graded}}} R_{\graded}/I \cong \dcomp{I}{S_{\graded} \otimes^L_{R_{\graded}} R_{\graded}/I} \cong S_{\graded} \otimes^L_{R_{\graded}} R_{\graded}/I,
    \end{equation*}
    where the second and last isomorphism follows from that the derived \(I\)-completion is symmetric monoidal and the derived \(I\)-completeness of \(S_{\graded} \otimes^L_{R_{\graded}} R_{\graded}/I\).
\end{proof}

\begin{theorem} \label{GradedAndre}
    Let \((R, R_{\graded})\) be a \(G\)-graded perfectoid ring and let \(\mcalS\) be a set of homogeneous elements in some graded integral extension of \(R_{\graded}\).
    
    Then there exists a completely faithfully flat map \((R, R_{\graded}) \to (R', R'_{\graded})\) of pro-\(G\)-graded perfectoid rings such that \(R'_{\graded}\) admits compatible systems of \(p\)-power roots of elements in \(\mcalS\).
    In particular, there exists a graded absolutely integrally closed\footnote{A pro-\(G\)-graded ring \((R, R_{\graded})\) is called \emph{graded absolutely integrally closed} if there is no non-trivial graded integral extension of \(R_{\graded}\).} \(G\)-graded perfectoid ring which is completely faithfully flat over \((R, R_{\graded})\).
\end{theorem}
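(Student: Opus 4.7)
The plan is to adjoin compatible $p$-power roots one homogeneous element at a time by applying the graded perfectoidization of \Cref{perfectoidization-graded} to a carefully chosen graded semiperfectoid cover, and then to iterate via (gradedwise completed) filtered colimits. A Zorn/filtered colimit argument, together with the stability of pro-$G$-graded perfectoid rings under gradedwise completed filtered colimits (which follows from \Cref{graded-perfectoid-equiv}), reduces the claim to the case of a single homogeneous $f \in \mathcal{S}$ of degree $h \in G$ in a graded integral extension of $R_{\graded}$. Since $f$ is integral over $R_{\graded}$, extracting the degree-$nh$ homogeneous component of an integral relation annihilating $f$ yields a homogeneous monic equation
\[
q(T) = T^n + a_{n-1} T^{n-1} + \cdots + a_0 = 0, \qquad a_i \in R_{(n-i) h},
\]
where $\deg(T) = h$; here we use $G = G[1/p]$ by \Cref{graded-perfectoid-equiv}.

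Next, equip $R_{\graded}[T^{1/p^\infty}]$ with the grading $\deg(T^{1/p^k}) = h/p^k$. By \Cref{PolynomialRingPerfectoid}, the pair
\[
(A, A_{\graded}) \defeq \bigl(R_{\graded}[T^{1/p^\infty}]^{\wedge I},\; R_{\graded}[T^{1/p^\infty}]^{\grwedge}\bigr)
\]
is a $G$-graded perfectoid ring. Since $q(T)$ is a homogeneous element of $A_{\graded}$ (of degree $nh$), \Cref{quot-pro-gra} produces the pro-$G$-graded semiperfectoid ring
\[
(B, B_{\graded}) \defeq \bigl(A/\overline{q(T) A},\; A_{\graded}/\overline{(q(T))}\bigr),
\]
and \Cref{perfectoidization-graded} furnishes its graded perfectoidization $(R', R'_{\graded}) \defeq (B, B_{\graded})_{\perfd}$. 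In $R'_{\graded}$, the image of $T$ satisfies $q = 0$ and the images of $T^{1/p^k}$ form a compatible system of homogeneous $p$-power roots of that image, thereby realizing the compatible $p$-power roots of $f$ in $R'_{\graded}$.

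It remains to verify that $R_{\graded} \to R'_{\graded}$ is completely faithfully flat, and to iterate for the ``in particular'' claim. By \Cref{EquivCFFlat}, complete faithful flatness is equivalent to the $I$-complete faithful flatness of the map on complete parts $R \to R'$; and since \Cref{perfectoidization-graded} identifies $R'$ with the $I$-adic completion of the classical perfectoidization $B_{\perfd}$ while $R \to A$ is already $p$-completely faithfully flat as a perfectoid polynomial algebra, the problem reduces to the $p$-complete faithful flatness of $A \to B_{\perfd}$. This is precisely the monic form of the classical André flatness lemma of Bhatt--Scholze \cite{bhatt2022Prismsa}, and is the main obstacle of the proof: it is \emph{not} automatic from the construction of the perfectoidization, but once invoked, \Cref{EquivCFFlat} lifts it seamlessly to the graded setting. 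For the absolutely integrally closed claim, I would then iterate the construction transfinitely over all homogeneous elements of all finite graded integral extensions of $R_{\graded}$, taking gradedwise $I$-adic completed filtered colimits at each stage; the resulting pro-$G$-graded perfectoid ring is graded absolutely integrally closed and remains completely faithfully flat over $(R, R_{\graded})$.
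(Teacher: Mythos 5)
The overall strategy matches the paper's: form a graded semiperfectoid cover by adjoining $p$-power roots of variables and imposing the monic relations, apply \Cref{perfectoidization-graded}, and transport the classical Andr\'e flatness lemma via \Cref{EquivCFFlat}. However, there are two points worth flagging.

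First, a concrete error in the flatness reduction. You write that ``the problem reduces to the $p$-complete faithful flatness of $A \to B_{\perfd}$.'' This cannot be what you want: the map $A \to B_{\perfd}$ factors through the surjection $A \to B = A/(q)$, and $q$ acts as zero on $B_{\perfd}$, so $\Tor_1^A(B_{\perfd}, A/q) = B_{\perfd}[q] = B_{\perfd}$; the map is very far from flat. What the classical (monic) Andr\'e flatness lemma actually gives is $p$-complete faithful flatness of $R \to B_{\perfd}$ \emph{directly}, not through any such factorization over $A$. The paper invokes precisely this ($R \to T \defeq (R[T_s^{1/p^\infty}]_s/(f_s(T_s))_s)_{\perfd}$ is $p$-completely faithfully flat) and then transports it to the $I$-adic setting via \Cref{BoundedTorsionPerfd}(3) and \Cref{EquivCFFlat}. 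Your conclusion is correct, but the reduction as stated is not; delete the intermediate factorization claim and invoke Andr\'e's lemma for $R \to B_{\perfd}$.

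Second, a structural difference. You reduce to one homogeneous element at a time and appeal to ``stability of pro-$G$-graded perfectoid rings under gradedwise completed filtered colimits,'' asserted to follow from \Cref{graded-perfectoid-equiv}. The paper never isolates this as a lemma, and it is not an immediate consequence of \Cref{graded-perfectoid-equiv}; instead the paper adjoins \emph{all} of $T_s^{1/p^\infty}$, $s \in \mcalS$, simultaneously, so that no colimit stability statement is needed until the final transfinite iteration for the ``in particular'' part. Your one-element-at-a-time reduction is not wrong, but it shifts the burden to a filtered-colimit stability fact that you would need to verify (checking that the conditions of \Cref{graded-perfectoid-equiv} are preserved under gradedwise-completed filtered colimits), whereas the paper's simultaneous construction sidesteps it entirely. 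Handling all of $\mcalS$ at once, as the paper does, is both shorter and avoids the unproven step.
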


\begin{proof}
    Take a homogeneous ideal of definition \(I\) of \(R_{\graded}\) containing \(p\).
    For each element \(s \in \mcalS\), we can find a monic polynomial \(f_s(T) \in R_{\graded}[T]\) and \(g_s \in G\) such that \(f_s(s) = 0\) and \(f_s(T)\) is a homogeneous element in \(R_{\graded}[T]\) if we set \(\deg(T) = g_s\).
    The pair of rings
    \begin{equation*}
        S \defeq \parenlr{R[T_s^{1/p^\infty}]_{s \in \mcalS}/(f_s(T_s))_{s \in \mcalS}}^{\wedge I} \quad \text{and} \quad S_{\graded} \defeq \parenlr{R_{\graded}[T_s^{1/p^\infty}]_{s \in \mcalS}/(f_s(T_s))_{s \in \mcalS}}^{\grwedge}
    \end{equation*}
    defines a \(G\)-graded semiperfectoid rings, i.e., \((S, S_{\graded})\) is a pro-\(G\)-graded ring admitting a surjection from a \(G\)-graded perfectoid ring \((R[T_s^{1/p^\infty}]_{s \in \mcalS}^{\wedge I}, R_{\graded}[T_s^{1/p^\infty}]_{s \in \mcalS}^{\grwedge})\) by \Cref{PolynomialRingPerfectoid}.
    Note that \(S_{\graded}\) has formally compatible systems of \(p\)-power roots of elements in \(\mcalS\).
    Applying \Cref{perfectoidization-graded} to this pro-\(G\)-graded semiperfectoid ring, we have the initial \(G\)-graded perfectoid ring \((S, S_{\graded})_{\perfd}\) over \((S, S_{\graded})\) such that the complete part of it is the same as \((S_{\perfd})^{\wedge I}\).
    Moreover, we have an isomorphism
    \begin{equation*}
        (S_{\perfd})^{\wedge I} \cong \parenlr{\parenlr{R[T_s^{1/p^\infty}]_{s \in \mcalS}/(f_s(T_s))_{s \in \mcalS}}_{\perfd}}^{\wedge {I}},
    \end{equation*}
    of \(S\)-algebras since both are the initial \(I\)-adic complete perfectoid \(S\)-algebras.
    By the usual Andr\'e's flatness lemma (see for example \cite[Theorem 7.14]{bhatt2022Prismsa} or \cite[Theorem VIII.3.1]{bhatt2018Geometric}), the perfectoidization \(T \defeq (R[T_s^{1/p^\infty}]_{s \in \mcalS}/(f_s(T_s))_{s \in \mcalS})_{\perfd}\) is \(p\)-completely faithfully flat over \(R\).
    
    So it suffices to show that the \(I\)-adic completion \(T^{\wedge I} = (S_{\perfd})^{\wedge I}\) is \(I\)-completely faithfully flat over \(R\).
    Since \(T\) is a perfectoid ring, \Cref{BoundedTorsionPerfd}(3) gives an isomorphism \(\dcomp{I}{T} \xrightarrow{\cong} T^{\wedge I}\).
    Then the isomorphisms
    \begin{equation*}
        T^{\wedge I} \otimes^L_{R} R/I \cong \dcomp{I}{T} \otimes^L_R R/I \cong \dcomp{I}{T \otimes^L_R R/I} \cong T \otimes^L_R R/I \cong (T \otimes^L_R R/pR) \otimes^L_{R/pR} R/I
    \end{equation*}
    holds as in the proof of \Cref{EquivCFFlat}.
    Since \(T \otimes^L_R R/pR\) is now concentrated in degree \(0\) and faithfully flat over \(R/pR\), the above isomorphisms say that \(T^{\wedge I}\) is \(I\)-completely faithfully flat over \(R\).
    
    In conclusion, \Cref{EquivCFFlat} shows that we can take the \(G\)-graded perfectoid ring \((R', R'_{\graded})\) to be \((S, S_{\graded})_{\perfd}\).
    The last assertion follows from taking transfinitely iterating the above construction as in \cite[Theorem 7.14]{bhatt2022Prismsa}.
\end{proof}

\begin{corollary}[\cref{intro:Andre-graded}] \label{Andre-graded}
Let \(R\) be a \(G\)-graded perfectoid ring and let \(\mcalS\) be a set of homogeneous elements in some graded integral extension of \(R\).
Then there exists a $p$-completely faithfully flat map $R \to R'$ of \(G\)-graded perfectoid rings such that \(R'\) admits compatible systems of \(p\)-power roots of elements in \(\mcalS\).
\end{corollary}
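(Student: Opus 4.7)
My plan is to deduce \cref{Andre-graded} from the pro-graded Andr\'e flatness lemma \cref{GradedAndre} via the categorical equivalence of \cref{first-cat-equiv}. First I would equip $R$ with its $p$-adic topology, so that the pair $(R^{\wedge p},R)$ becomes a pro-$G$-graded $p$-adic perfectoid ring whose structure graded ring is $R$ itself and with homogeneous ideal of definition $I=(p)$. Under this equivalence, a graded integral extension of $R$ in the sense of the corollary is exactly a graded integral extension of the structure graded ring of $(R^{\wedge p},R)$, so the set $\mcalS$ is valid input to the pro-graded theorem.

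Next I would invoke \cref{GradedAndre} on $(R^{\wedge p},R)$ with this $\mcalS$. This produces a completely faithfully flat morphism $(R^{\wedge p},R)\to (R'',R''_{\graded})$ of pro-$G$-graded perfectoid rings such that the structure graded ring $R''_{\graded}$ admits compatible systems of $p$-power roots of all elements of $\mcalS$. Setting $R' \defeq R''_{\graded}$ and applying the quasi-inverse part of the categorical equivalence \cref{first-cat-equiv}, I obtain a graded ring homomorphism $R\to R'$ of $G$-graded perfectoid rings, equipped with the desired compatible systems of $p$-power roots by construction.

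It remains to transfer the flatness. Unwinding \cref{DefCompletelyFFlat} with homogeneous ideal of definition $I=(p)$, the complete faithful flatness of $(R^{\wedge p},R)\to (R'',R''_{\graded})$ says precisely that the structure graded ring map $R\to R''_{\graded}=R'$ is adic (i.e.\ sends $(p)$ to a homogeneous ideal of definition of $R'$, automatic here) and $p$-completely faithfully flat, which is exactly the required conclusion.

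I do not anticipate any serious obstacle: all the analytic content (the construction of the graded perfectoidization in \cref{perfectoidization-graded} and the classical Andr\'e flatness lemma used inside the proof of \cref{GradedAndre}) is already packaged into \cref{GradedAndre}, and \cref{first-cat-equiv} cleanly converts between the graded and pro-graded formulations. The only thing worth double-checking is that the notions of ``graded integral extension'' on the two sides of the equivalence agree, which is immediate because the structure graded ring of $(R^{\wedge p},R)$ is literally $R$.
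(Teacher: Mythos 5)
Your proposal is correct and is essentially the paper's own proof, just spelled out in more detail: the paper's argument is exactly to form the pro-$G$-graded $p$-adic perfectoid ring $(R^{\wedge p},R)$ and apply \cref{GradedAndre}, then read off the statement through the equivalence of \cref{first-cat-equiv} and the definition of complete faithful flatness with $I=(p)$.
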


\begin{proof}
Since $(R^{\wedge p},R)$ is a pro-$G$-graded $p$-adic ring, the result follows from \cref{GradedAndre}.
\end{proof}

\subsection{Graded test perfectoid and its application}

We apply the existence of a graded variant of perfectoidization (\Cref{gr-perfdion}) to the perfectoid purity of graded rings.

\begin{proposition}\label{gr-test-perfd}
Let $R$ be a $G$-graded ring.
Then there exists a graded perfectoid \(R\)-algebra $R_{\infty,\gr}$ such that its \(p\)-adic completion $R_{\infty,\gr}^{\wedge p}$ is a test perfectoid over $R$ in the sense of \cite{yoshikawa2025Computation}*{Definition~2.4}.
If \(R\) is \(p\)-torsion-free, then so is \(R_{\infty, \graded}\).
\end{proposition}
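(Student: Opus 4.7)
The plan is to produce $R_{\infty,\graded}$ as the graded perfectoidization (Corollary \ref{gr-perfdion}) of an auxiliary graded $R$-algebra $A$ that formally contains a perfectoid uniformizer and compatible $p$-power roots of a sufficiently large set of homogeneous elements, and then to transfer the test-perfectoid property of its $p$-adic completion from the non-graded situation via the compatibility in that corollary.

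First I would fix a set $\mathcal{S}$ of homogeneous elements of $R$ large enough to detect the test perfectoid property (one may take $\mathcal{S}$ to be all homogeneous elements of $R$), and form the $G[1/p]$-graded $R$-algebra
\[
A \defeq R\bigl[\varpi^{1/p^\infty},\, T_s^{1/p^\infty} : s \in \mathcal{S}\bigr] \big/ \bigl(\varpi^p - p,\ T_s - s : s \in \mathcal{S}\bigr),
\]
with $\deg(\varpi) = 0$ and $\deg(T_s^{1/p^n}) = \deg(s)/p^n$. By construction $A$ comes equipped with a formal perfectoid uniformizer $\varpi$ with $\varpi^p = p$ and with compatible systems of $p$-power roots of every $s \in \mathcal{S}$.

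Next I would verify that $A$ is graded semiperfectoid in the sense of Definition \ref{DefSemiperfectoidGraded}. The base ring $B \defeq \mathbb{Z}_p[\varpi^{1/p^\infty}]^{\wedge p}/(\varpi^p - p)$ is a perfectoid ring concentrated in degree $0$, and iterated application of Lemma \ref{PolynomialRingPerfectoid} makes $B[T_s^{1/p^\infty}]_{s \in \mathcal{S}}$ into a $G[1/p]$-graded perfectoid ring $P$. A graded surjection from the graded perfectoidization of $P \otimes_{\mathbb{Z}_p} R / (T_s - s)$ onto $A$ then exhibits $A$ as graded semiperfectoid. (When $R$ has characteristic $p$, one replaces $\mathbb{Z}_p$ by $\mathbb{F}_p$ and $B$ by a suitable perfect $\mathbb{F}_p$-algebra, using the decomposition of Proposition \ref{DecompositionPerfectoidization} to glue the characteristic-$p$ and $p$-torsion-free parts.) Applying Corollary \ref{gr-perfdion} to $A$, I then set $R_{\infty,\graded} \defeq A_{\perfd,\graded}$.

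By Corollary \ref{gr-perfdion} and Remark \ref{BSPerfdGradedRings}, the $p$-adic completion $R_{\infty,\graded}^{\wedge p}$ coincides with the non-graded perfectoidization $(A^{\wedge p})_{\perfd}$; since $A^{\wedge p}$ is precisely the standard universal $p$-power root extension of $R^{\wedge p}$, its perfectoidization is a test perfectoid over $R$ in the sense of \cite{yoshikawa2025Computation}*{Definition~2.4}. For the final assertion, if $R$ is $p$-torsion-free then $A$ is also $p$-torsion-free (being a quotient of a $p$-torsion-free polynomial extension by relations of the form $T_s - s$ and $\varpi^p - p$ that preserve $p$-torsion-freeness), and Theorem \ref{perfectoidization-graded} identifies $R_{\infty,\graded} \cong C(A_{\graded})^{\grwedge}$ as a subring of $A_{\graded}[1/p]$, which is $p$-torsion-free. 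The main obstacle is exhibiting the graded perfectoid cover $P \twoheadrightarrow A$: in the graded setting the naive polynomial-plus-$p$-power-roots algebra is not itself perfectoid, so one must assemble the cover by perfectoidizing the polynomial base before tensoring with $R$, and the characteristic-$p$ case requires a separate treatment via Proposition \ref{DecompositionPerfectoidization}.
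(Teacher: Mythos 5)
The paper's proof takes a genuinely different route from yours: it invokes \cite{yoshikawa2025Computation}*{Theorem~A.1} as a black box to obtain a test perfectoid $R_{0,\infty}$ over $R_0$ first, then forms $S_{\infty,\gr}=R_{0,\infty}[X_f^{1/p^\infty}]$ (with $\mathcal{S}$ chosen to generate $R$ over $R_0$ and $\mathcal{S}\cap R_0=\emptyset$), shows this is graded perfectoid, and sets $R_{\infty,\gr}=(S_{\infty,\gr}\otimes_S R)_{\perfd,\gr}$. The test-perfectoid property of $S_\infty$ over $S$ is verified explicitly using the test-perfectoid property of $R_{0,\infty}$, and the $p$-torsion-free statement uses the structure of $R_{0,\infty}$ as the perfectoidization of a $p$-completely faithfully flat $R_0$-algebra together with the symmetric-monoidal property of perfectoidization. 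Your approach, which constructs $B=\mathbb{Z}_p[\varpi^{1/p^\infty}]^{\wedge p}/(\varpi^p-p)$ by hand and adjoins $p$-power roots of all homogeneous elements, is more self-contained and bypasses the cited black box, but in exchange you have to reprove the test-perfectoid property yourself.

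There are two genuine gaps. First, your verification that $A$ is graded semiperfectoid has the direction of the map backwards: you write ``a graded surjection from the graded perfectoidization of $P\otimes_{\mathbb{Z}_p}R/(T_s-s)$ onto $A$,'' but $P\otimes_{\mathbb{Z}_p}R/(T_s-s)$ is essentially $A$ itself, and a perfectoidization receives a map \emph{from} $A$, it does not surject \emph{onto} $A$. The intended cover should be $P$ itself (the gradedwise $p$-completion of $B[T_s^{1/p^\infty}]$), which is graded perfectoid by iterating \Cref{PolynomialRingPerfectoid}, and which surjects onto $A^{\grwedge p}$ (not $A$, which is not gradedwise $p$-complete) via $T_s\mapsto s$; it is $A^{\grwedge p}$ that is graded semiperfectoid. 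Second, the claim that ``its perfectoidization is a test perfectoid over $R$'' is asserted without proof. It is true, but the argument is not automatic: given a perfectoid $R$-algebra $Q$, one must first replace $Q$ by a $p$-completely faithfully flat extension $Q'$ admitting compatible $p$-power roots of $p$ and of every $s\in\mathcal{S}$ (this is exactly \cref{Andre-graded} or the classical André flatness lemma \cite{bhatt2022Prismsa}*{Theorem~7.14}, applied to integral elements such as roots of $X^{p^n}-p$ and $X^{p^n}-s$), and only then does the universal map $A^{\wedge p}\to Q'$ exist and factor through $A_{\perfd}$. Without this step the test-perfectoid property is not established; note that a general perfectoid ring only contains some $\varpi_Q$ with $\varpi_Q^p=up$ for a unit $u$, not $p^{1/p}$ itself, so the extension step cannot be skipped.
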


\begin{proof}
By \cite{yoshikawa2025Computation}*{Theorem~A.1} and its proof, there exists a \(p\)-completely faithfully flat \(R_0\)-algebra \(R'\) such that it admits a faithfully flat morphism from a perfectoid ring \(P\) and its perfectoidization \(R_{0, \infty} \defeq R'_{\perfd}\) is a test perfectoid over \(R_0\).
Choose a subset $\mcalS$ of homogeneous elements of $R$ generating $R$ as an $R_0$-algebra such that $\mcalS \cap R_0 = \emptyset$.
Then we have a surjective graded homomorphism
\[
\varphi \colon S \defeq R_0[X_f \mid f \in \mcalS] \xrightarrow{X_f \mapsto f} R,
\]
where the grading on $S$ is given by $\deg(X_f) = \deg(f)$ and $R_0 \subseteq S_0$.

Set 
\[
S_{\infty,\gr} \defeq (S \otimes_{R_0} R_{0,\infty})[X_f^{1/p^\infty} \mid f \in \mcalS] = R_{0, \infty}[X_f^{1/p^\infty} \mid f \in \mcalS]
\quad \text{and} \quad
S_\infty \defeq S_{\infty,\gr}^{\wedge p}.
\]
Then $S_{\infty,\gr}$ is a $G[1/p]$-graded ring with $\deg(X_f^{1/p^e}) = \deg(f)/p^e$, and $ R_{0,\infty} \subseteq (S_{\infty,\gr})_0$.
In particular, $S_{\infty,\gr}$ is a graded perfectoid ring by \Cref{except-for-p-comp}(2).

We claim that $S_\infty$ is a test perfectoid over $S$.
Indeed, let $P$ be a perfectoid ring over $S$.
Replacing $P$ by a $p$-completely faithfully flat extension, we may assume that we have an $R_0$-algebra homomorphism $R_{0,\infty} \to P$ and contains compatible systems of $p$-power roots of each $X_f$ for all $f \in \mcalS$.
Then there exists an $S$-algebra homomorphism $S_{\infty,\gr} \to P$, as desired.

Next, set
\[
R_{\infty,\gr} \defeq (S_{\infty,\gr} \otimes_S R)_{\perfd,\gr},
\]
which is defined in \cref{gr-perfdion}.
Then we have $R_{\infty,\gr}^{\wedge p} = (S_\infty \otimes_S R)_{\perfd}$, which is a test perfectoid over $R$ by \cite{yoshikawa2025Computation}*{Proposition~2.6(2)}, as claimed.

Assume that \(R\) is \(p\)-torsion-free.
Since \(R_{0, \infty}\) is the perfectoidization of \(R'\), the ring \(S_{\infty}\) is the perfectoidization of \(R'[X_f^{1/p^\infty}]_{f \in \mcalS}\) and then we have
\begin{align*}
    R_{\infty, \graded}^{\wedge p} & = (R_{0, \infty}[X_f^{1/p^\infty}]_{f \in \mcalS} \otimes_S R)_{\perfd} \cong (R_{0, \infty}[X_f^{1/p^\infty}]_{f \in \mcalS} \widehat{\otimes}^L_P (P \otimes_S R))_{\perfd} \\
    & \cong (R'[X_f^{1/p^\infty}]_{f \in \mcalS})_{\perfd} \widehat{\otimes}^L_P (P \otimes_S R)_{\perfd} \cong (R'[X_f^{1/p^{\infty}}]_{f \in \mcalS} \otimes_{S} R)_{\perfd},
\end{align*}
where we use the faithfully flatness of \(R_{0, \infty}[X_f^{1/p^\infty}]_{f \in \mcalS}\) over \(P\) and the symmetric monoidal property on the perfectoidization functor in \cite[Proposition 8.13]{bhatt2022Prismsa}.
Since \(R'\) is \(p\)-completely faithfully flat over \(R_0\), so is \(R'[X_f^{1/p^\infty}]\) over \(S = R_0[X_f]\).
Since \(R\) is \(p\)-torsion-free, so is the \(p\)-completely faithfully flat base change \(R'[X_f^{1/p^{\infty}}]_{f \in \mcalS} \otimes_{S} R\).
Using \cite[Lemma A.2]{ma2022Analogue}, its perfectoidization \(R_{\infty, \graded}^{\wedge p}\) is \(p\)-torsion-free and so is the structure graded ring \(R_{\infty, \graded}\).
\end{proof}

We give two corollaries.
Recall that a Noetherian ring with \(p\) belonging the Jacobson radical is called \emph{perfectoid pure} if there exists a pure ring homomorphism \(R \to P\) to a perfectoid ring \(P\).

\begin{definition}[{cf. \cite{bhatt2024Perfectoid}}] \label{DefGradedPerfdPure}
    Let \(R\) be a \(G\)-graded Noetherian ring.
    Assume that the graded Jacobson radical of \(R\) contains \(p\).
    We say that this \(R\) is called \emph{graded perfectoid pure} if there exists a graded ring homomorphism \(R \to P\) to a graded perfectoid ring \(P\) such that it is pure as an \(R\)-module homomorphism.
\end{definition}

\begin{remark}
    Let \(R\) be a \(G\)-graded ring.
    On perfectoid purity, we remark the following.
    \begin{itemize}
        \item If \(R\) has a pure ring homomorphism \(R \to P\) to a graded perfectoid ring \(P\), then any graded maximal ideal of \(R\) automatically contains \(p\): For any graded maximal ideal \(\mfrakm\) of \(R\), the base change \(R/\mfrakm \to P \otimes_R R/\mfrakm \cong P/\mfrakm P\) is injective, in particular, \(P/\mfrakm P\) is not zero and its graded Jacobson radical contains \(p\).
        Take a graded maximal ideal of \(P/\mfrakm P\), which contains \(p\).
        Then the inverse image of this is the same as \(\mfrakm\) in \(R\) and therefore \(\mfrakm\) contains \(p\).
        \item The polynomial ring \(\mathbf{Z}_p[T]\), endowed with the \(\mathbb{Z}_{\ge 0}\)-grading \(\deg(T) = 1\), is graded perfectoid pure. However, it is not perfectoid pure, since it has a maximal ideal \((pT - 1)\) that does not contain \(p\).
    \end{itemize}
\end{remark}

\begin{corollary} \label{GradedPerfdPure}
    Let \(R\) be a \(G\)-graded Noetherian ring such that its graded Jacobson radical contains \(p\).
    Then \(R^{\wedge p}\) is perfectoid pure as a ring if and only if \(R\) is graded perfectoid pure.
\end{corollary}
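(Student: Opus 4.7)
The plan is to treat the two directions independently, relying on the localization--completion purity criterion \Cref{purity-graded-local} and, for the harder direction, on the graded test perfectoid constructed in \Cref{gr-test-perfd}. In both directions, $R$ is Noetherian with $p$ in its graded Jacobson radical, so the hypotheses of \Cref{purity-graded-local} are available throughout.

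For the direction ``graded perfectoid pure $\Rightarrow$ $R^{\wedge p}$ perfectoid pure'', suppose $\psi \colon R \to Q$ is pure with $Q$ graded perfectoid. By \Cref{remk-first-pro-gra}, $Q$ embeds in its $p$-adic completion $Q^{\wedge p}$, which is a perfectoid ring by the definition of graded perfectoid and hence has bounded $p^\infty$-torsion by \Cref{BoundedTorsionPerfd}(2); the same bound then holds for the subring $Q$. The implication $(1) \Rightarrow (2)$ of \Cref{purity-graded-local} applied to $\psi$ yields purity of $\psi^{\wedge p} \colon R^{\wedge p} \to Q^{\wedge p}$, which realizes $R^{\wedge p}$ as perfectoid pure.

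For the direction ``$R^{\wedge p}$ perfectoid pure $\Rightarrow$ graded perfectoid pure'', suppose $R^{\wedge p} \to P$ is pure with $P$ perfectoid. Take the graded perfectoid $R$-algebra $R_{\infty,\gr}$ supplied by \Cref{gr-test-perfd}, whose $p$-adic completion $R_{\infty,\gr}^{\wedge p}$ is a test perfectoid over $R$. By the defining property of a test perfectoid, there exist a $p$-completely faithfully flat perfectoid extension $P \to P'$ and an $R$-algebra homomorphism $R_{\infty,\gr}^{\wedge p} \to P'$. Since $P \to P'$ is a $p$-completely faithfully flat map between perfectoid rings it is pure, so the composite $R^{\wedge p} \to P \to P'$ is pure, and then the factor $R^{\wedge p} \to R_{\infty,\gr}^{\wedge p}$ is pure as well. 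Noting that $R_{\infty,\gr}$ embeds in $R_{\infty,\gr}^{\wedge p}$ and therefore has bounded $p^\infty$-torsion, $(2) \Rightarrow (1)$ of \Cref{purity-graded-local} produces a pure graded homomorphism $R \to R_{\infty,\gr}$; since $R_{\infty,\gr}$ is graded perfectoid, this witnesses graded perfectoid purity of $R$.

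The main technical hurdle is to justify purity of the $p$-completely faithfully flat extension $P \to P'$ between perfectoid rings. This should follow from classical faithful flatness of $P/p \to P'/p$ together with $p$-adic completeness and bounded $p^\infty$-torsion of both rings, via a standard derived $p$-complete argument in the spirit of \cite[Lemma 2.13]{bhatt2024Perfectoid}. Alternatively, one can bypass this step by appealing to a direct equivalence of the form ``$R^{\wedge p}$ perfectoid pure $\Leftrightarrow$ $R^{\wedge p} \to R_{\infty,\gr}^{\wedge p}$ pure'' extracted from the test perfectoid framework of \cite{yoshikawa2025Computation}, which is precisely the detection principle a test perfectoid is designed to enforce.
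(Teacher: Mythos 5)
Your proposal is correct and follows essentially the same route as the paper: the forward direction applies $(1)\Rightarrow(2)$ of \Cref{purity-graded-local} after noting that the codomain has bounded $p^\infty$-torsion (as a graded subring of the perfectoid ring $Q^{\wedge p}$), and the backward direction invokes \Cref{gr-test-perfd} to factor a suitably enlarged pure map through $R_{\infty,\graded}^{\wedge p}$ and then applies $(2)\Rightarrow(1)$ of \Cref{purity-graded-local}. The only difference is that you explicitly flag the need for purity of the $p$-completely faithfully flat extension $P\to P'$ of perfectoid rings, a point the paper's phrase ``By changing $P$ by a $p$-completely faithfully flat extension'' leaves implicit; your instinct that this is handled by the bounded-torsion/$p$-complete purity criterion in the spirit of \cite[Lemma 2.13]{bhatt2024Perfectoid} (or, alternatively, by citing \cite[Proposition 2.9(2)]{yoshikawa2025Computation} directly, as the paper does in \Cref{gr-p-pure}) is sound.
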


\begin{proof}
    If there exists a pure graded ring homomorphism \(R \to P\) to a graded perfectoid ring \(P\).
    Since \(P^{\wedge p}\) is a perfectoid ring, then \(R^{\wedge p}\) is perfectoid pure because of \Cref{purity-graded-local}.
    
    Assume that \(R^{\wedge p}\) is perfectoid pure.
    Take a pure ring homomorphism \(R^{\wedge p} \to P\) to a perfectoid ring \(P\).
    By changing \(P\) by a \(p\)-completely faithfully flat extension, \Cref{gr-test-perfd} shows that there exists an \(R^{\wedge p}\)-algebra homomorphism \(R_{\infty, \graded}^{\wedge p} \to P\) from the \(p\)-adic completion of a graded perfectoid \(R\)-algebra \(R_{\infty, \graded}\).
    So the structure morphism \(R^{\wedge p} \to R_{\infty, \graded}^{\wedge p}\) is pure and this implies the desired purity by \Cref{purity-graded-local}.
\end{proof}

\begin{corollary}\label{gr-p-pure}
Let $R$ be a \(p\)-torsion-free $G$-graded Noetherian ring.
Assume that $R$ has a unique graded maximal ideal $\mfrakm$ with $p \in \mfrakm$.
Then $R^{\wedge p}$ is perfectoid pure if and only if so is $R_\mfrakm$.
Moreover, in this case we have $\ppt(R^{\wedge p},p) = \ppt(R_\mfrakm,p)$. 
\end{corollary}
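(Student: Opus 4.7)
The strategy is to use the graded test perfectoid $R_{\infty,\gr}$ provided by \Cref{gr-test-perfd} together with \Cref{purity-graded-local} to reduce the two perfectoid purity conditions to one common graded purity statement. First, since $R$ is $p$-torsion-free, the last assertion of \Cref{gr-test-perfd} yields that $R_{\infty,\gr}$ is also $p$-torsion-free, and in particular has bounded $p^\infty$-torsion. The graded Jacobson radical of $R$ is the unique graded maximal ideal $\mfrakm$, which contains $p$, so \Cref{purity-graded-local} applies to the graded ring homomorphism $\psi \colon R \to R_{\infty,\gr}$. Its conditions (2) and (3) specialize, because there is only one graded maximal ideal, to
\[
R^{\wedge p} \to R_{\infty,\gr}^{\wedge p} \text{ pure} \iff R_\mfrakm \to (R_{\infty,\gr})_\mfrakm^{\wedge p} \text{ pure.}
\]

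By the test perfectoid property built into \Cref{gr-test-perfd}, the left-hand condition is equivalent to $R^{\wedge p}$ being perfectoid pure. The crux is then to verify that $(R_{\infty,\gr})_\mfrakm^{\wedge p}$ is itself a test perfectoid over $R_\mfrakm^{\wedge p}$; combined with the standard fact that perfectoid purity of a Noetherian local ring with $p$ in the maximal ideal is equivalent to perfectoid purity of its $p$-adic completion (cf.\ \cite{bhatt2024Perfectoid}), this identifies the right-hand condition with $R_\mfrakm$ being perfectoid pure. To establish the test perfectoid claim I would retrace the construction in the proof of \Cref{gr-test-perfd}, where $R_{\infty,\gr}^{\wedge p} \cong (S_\infty \otimes_S R)_\perfd$ for a $p$-completely faithfully flat extension $S \to S_\infty$ of $S = R_0[X_f]_{f \in \mcalS}$ adjoining compatible systems of $p$-power roots. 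Flat base change along $R \to R_\mfrakm$ together with the symmetric monoidality of perfectoidization (\cite{bhatt2022Prismsa}*{Proposition~8.13}) yields $(R_{\infty,\gr})_\mfrakm^{\wedge p} \cong (S_\infty \otimes_S R_\mfrakm)_\perfd$, and the same reasoning as in \Cref{gr-test-perfd} shows this is a test perfectoid over $R_\mfrakm^{\wedge p}$.

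The equality $\ppt(R^{\wedge p}, p) = \ppt(R_\mfrakm, p)$ follows by running the argument quantitatively: both thresholds are characterised as suprema of exponents for which an appropriate thickened purity statement (against $R_{\infty,\gr}^{\wedge p}$, respectively against its $\mfrakm$-localized $p$-completion) holds, and at each stage the relevant modules remain graded $R$-modules with bounded $p^\infty$-torsion, so \Cref{purity-graded-local} continues to apply degree by degree. The main obstacle is the compatibility between the perfectoidization functor and localization at $\mfrakm$: since $\mfrakm$ is only a graded prime, $R_\mfrakm$ is no longer graded, so making the test perfectoid universal property descend to $R_\mfrakm^{\wedge p}$ requires carefully combining the $p$-completely faithful flatness built into the construction of $R_{\infty,\gr}^{\wedge p}$ with the symmetric monoidality of $(-)_\perfd$, following the template in the proof of \Cref{gr-test-perfd}.
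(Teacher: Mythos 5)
Your proposal follows essentially the same route as the paper: construct the graded test perfectoid $R_{\infty,\gr}$ via \Cref{gr-test-perfd}, apply \Cref{purity-graded-local} to the graded map $R \to R_{\infty,\gr}$ to relate purity over $R^{\wedge p}$ to purity over $R_\mathfrak{m}$, and use the test perfectoid characterization of perfectoid purity (and of $\ppt$) on both sides. The only real divergence is how you justify that $(R_{\infty,\gr})_\mathfrak{m}^{\wedge p}$ is a test perfectoid: you propose to re-derive it by flat base change of the construction in \Cref{gr-test-perfd} together with symmetric monoidality of perfectoidization, whereas the paper simply invokes \cite{bhatt2019Regular}*{Example~3.8} to see that the $p$-completed localization of a perfectoid ring stays perfectoid and then asserts the test perfectoid property directly over $R_\mathfrak{m}$ (so it does not need your detour through $R_\mathfrak{m}^{\wedge p}$ and the completion/localization comparison of perfectoid purity); both are reasonable, and yours is a bit more explicit but also relies on a base-change compatibility of $(-)_\perfd$ that would need the same care the paper elides with ``we can show.''
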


\begin{proof}
Take a \(p\)-torsion-free graded perfectoid ring $R_{\infty,\graded}$ as in \cref{gr-test-perfd} and set its \(p\)-adic completion $R_\infty \defeq (R_{\infty,\gr})^{\wedge p}$.
Since $R_\infty$ is a \(p\)-torsion-free test perfectoid ring over $R^{\wedge p}$, we know that $R^{\wedge p}$ is perfectoid pure if and only if $R^{\wedge p} \to R_\infty$ is pure, and
\[
\ppt(R^{\wedge p},p)
= \sup\bigl\{\, \alpha \in \Z[1/p]_{\ge 0}
\ \bigm|\ 
R^{\wedge p} \to R_\infty \xrightarrow{\cdot p^\alpha} R_\infty
\text{ is pure}\,\bigr\}
\]
by \cite{yoshikawa2025Computation}*{Proposition~2.9(2)}.
Along with the \(p\)-completed localization $(R_{\infty})_{\mfrakm}^{\wedge p}$ becoming perfectoid by \cite{bhatt2019Regular}*{Example~3.8}, we can show that it becomes a \(p\)-torsion-free test perfectoid over $R_\mfrakm$.
In particular, we know that \(R_{\mfrakm}\) is perfectoid pure if and only if \(R_{\mfrakm} \to (R_{\infty})_{\mfrakm}^{\wedge p} \cong (R_{\infty, \graded})_{\mfrakm}^{\wedge p}\) is pure, and
\begin{equation*}
    \ppt(R_{\mfrakm}, p) = \sup\bigl\{\, \alpha \in \Z[1/p]_{\ge 0}\ \bigm| \ R_{\mfrakm} \to (R_\infty)_{\mfrakm}^{\wedge p} \xrightarrow{\cdot p^\alpha} (R_\infty)_{\mfrakm}^{\wedge p} \text{ is pure}\,\bigr\}
\end{equation*}
as above.
Then the desired assertion follows from applying \cref{purity-graded-local} for the graded homomorphism \(R \to R_{\infty, \graded}\).
\end{proof}

\section{Graded perfect prism and categorical equivalence}

In this section, we will give the definition of graded perfect prisms and its relation to graded perfectoid rings.

\subsection{Graded perfect prisms}

\begin{definition}\label{semi-perfectoid-prism}
We define a graded variant of perfect prisms:
\begin{itemize}
    \item Let $A$ be a $G$-graded ring.
    We say that $(A,I)$ is \emph{a graded perfect prism} if $A$ is a $\delta$-ring with $\delta(A_g) \subseteq A_{pg}$, $I$ is a homogeneous ideal, and $(A^{\wedge (p,I)},I)$ is perfect prism.
    \item \((A, A_{\graded})\) be pro-$G$-graded rings.
    We say that $(A,A_{\graded},I)$ is \emph{a pro-$G$-graded perfect prism} if $(A_{\graded},I)$ is a graded perfect prism and $(p,I)$ is topological nilpotent in $A_{\graded}$.
\end{itemize}
\end{definition}

\begin{remark}\label{rmk:first-prism}
On the definition of graded perfect prisms, we remark the following.
\begin{itemize}
    \item If $(A,A_{\graded},I)$ is a pro-$G$-graded perfect prism, then $(A, IA)$ is a perfect prism.
    \item The categorical equivalence in \cref{remk-first-pro-gra} induces the categorical equivalence
    \[
(\text{category of graded perfect prisms}) \xrightarrow{\simeq} (\text{category of pro-$G$-graded $(p,I)$-adic perfect prism})
    \]
    given by $(A,I) \mapsto (A^{\wedge (p,I)},A,I)$ and $(A,A_{\gr},I) \to (A_{\gr},I)$.
\end{itemize}
\end{remark}

\begin{proposition}\label{graded-perfect-prism}
Let \((A, A_{\graded}, I)\) be a pro-\(G\)-graded perfect prism.
Then the following hold:
\begin{enumerate}
    \item The Frobenius lift \(\phi \colon A \to A\) induces an isomorphism
    \[
        \phi_{\graded} \colon A_{\graded} \xrightarrow{\;\sim\;} A_{\graded}^{[p]}
    \]
    of \(G\)-graded topological rings.  
    In particular, \(G = G[1/p]\).

    \item There exists an element \(d \in A_0\) such that \(I = (d)\), and the pair \((A_0, (d))\) is a perfect prism.
\end{enumerate}
\end{proposition}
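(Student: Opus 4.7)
The plan is to prove (1) by directly verifying that the Frobenius lift respects the grading and is bijective on each graded piece (using the perfect-prism hypothesis and topological homogeneous decompositions), and to prove (2) by restricting the prism structure to the degree-\(0\) subring \(A_0\) of \(A_{\graded}\).

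For (1), I first observe that for homogeneous \(a \in A_g\), the formula \(\phi(a) = a^p + p\delta(a)\) lies in \(A_{pg}\) since \(a^p \in A_{pg}\) by the ring grading and \(p\delta(a) \in A_{pg}\) by the hypothesis \(\delta(A_g) \subseteq A_{pg}\). Hence \(\phi\) restricts to a continuous graded morphism \(\phi_{\graded}\colon A_{\graded} \to A_{\graded}^{[p]}\); continuity is clear because \(\phi\) is continuous for the \((p,I)\)-adic topology. Injectivity of \(\phi_{\graded}\) is inherited from injectivity of \(\phi\colon A \to A\) (an isomorphism by the perfect-prism hypothesis) through the embedding \(A_{\graded} \hookrightarrow A\) of \Cref{remk-first-pro-gra}. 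For surjectivity, given homogeneous \(b \in A_{pg}\), lift it via \(\phi^{-1}\) to \(a \in A\), expand \(a = \sum_h a_h\) topologically, and use continuity of \(\phi\) to write the topological homogeneous decomposition \(\phi(a) = \sum_h \phi(a_h)\) with \(\phi(a_h) \in A_{ph}\). By uniqueness of the decomposition, \(\phi(a_h) = 0\) whenever \(ph \neq pg\), and injectivity then yields \(a_h = 0\), so either \(g \in G\) and \(a = a_g \in A_g\) is the required preimage, or else \(b = 0\). The ``in particular'' clause follows: the support of \(A_{\graded}\) is stable under \(g \mapsto g/p\), so after restricting \(G\) to its largest \(p\)-divisible subgroup we may assume \(G = G[1/p]\).

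For (2), the strategy is to show that \((A_0, I_0)\) with \(I_0 \defeq I \cap A_0\) is itself a perfect prism and that \(I_0\) generates \(IA\). The \(\delta\)-structure restricts to \(A_0\) since \(\delta(A_0) \subseteq A_0\), and part (1) applied at \(g = 0\) gives \(\phi|_{A_0}\colon A_0 \to A_0\) bijective; gradedwise completeness of \(A_{\graded}\) ensures \(A_0\) is \((p,I_0)\)-adically complete. The heart of the argument is producing a degree-\(0\) generator of \(IA\): take any distinguished generator \(d \in A\), write \(d = \sum_g d_g\) as a topological homogeneous decomposition, and apply \Cref{purity-homog-ideal}(1) to see that each \(d_g \in I\). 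The invertible module \(IA/I^2 A\) is free of rank \(1\) over \(A/IA\) and inherits a graded structure from the homogeneity of \(I\), so it is isomorphic to a shift \((A/IA)(g_0)\) for some \(g_0\); the Frobenius iso from (1) pins down \(g_0 = 0\), making \(\overline{d_0}\) a generator of \(IA/I^2 A\), and a Nakayama/completeness argument promotes this to the statement that \(d_0 \in A_0\) generates \(IA\). I expect the vanishing \(g_0 = 0\) to be the main obstacle: one must show that the rank-\(1\) invertible ideal \(I\) admits a degree-\(0\) generator rather than a generator in a nonzero degree. This will likely require combining the Frobenius isomorphism from (1) with the prism axiom \(p \in I + \phi(I)A\), or alternatively using that \(A/I\) is a graded perfectoid ring which carries a degree-\(0\) element \(\varpi\) with \(p \in \varpi^p (A/I)\); lifting \(\varpi\) via Teichm\"uller then furnishes the desired degree-\(0\) generator of \(I\).
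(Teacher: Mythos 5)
Your treatment of part~(1) is on the right track, but two topological points are glossed over. You claim ``continuity is clear because \(\phi\) is continuous for the \((p,I)\)-adic topology,'' yet the topology on \(A_{\graded}\) is a general \(G\)-graded adic topology with some homogeneous ideal of definition \(J\); the hypothesis is only that \((p,I)\) is topologically nilpotent in \(A_{\graded}\), not that \((p,I)\) defines the topology. The paper does genuine work here: it shows that for any open homogeneous ideal \(J\) with \(p^n\in J\), one has \(\phi(J^n)\subseteq(J,p^n)=J\), using iterated estimates on \(\delta\). You also never address continuity of the inverse \(\phi_{\graded}^{-1}\), which is necessary for the claim that \(\phi_{\graded}\) is an isomorphism of \emph{topological} \(G\)-graded rings; the paper handles this by showing \(\phi_{\graded}^{-1}(J^{rp})\subseteq(J,p)\) for \(J=(x_1,\dots,x_r)\). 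Your surjectivity argument via topological homogeneous decompositions is a valid alternative to the paper's invocation of \Cref{SurjectiveProGraded}.

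Part~(2) contains a genuine gap. Your strategy — \(IA/I^2A\) is a graded invertible module, hence a shift \((A/IA)(g_0)\), and one must argue \(g_0=0\) — has two problems. First, it is not automatic that a graded invertible module over a general (non-graded-local, non-Noetherian) graded ring is a shift; you would need a separate argument that \(I\) admits any homogeneous generator before you can start comparing degrees. Second, and more importantly, you explicitly flag that \(g_0=0\) is the ``main obstacle'' and then do not close it: your first suggestion (``combine the Frobenius isomorphism from (1) with \(p\in I+\phi(I)A\)'') is the right direction but is left as a gesture, and your second suggestion (use that \(A/I\) is a graded perfectoid ring carrying a degree-0 element \(\varpi\) with \(p\in\varpi^p(A/I)\), and lift via Teichm\"uller) is circular in the paper's logical structure: the conclusion that \(A/(d)\) is a \(G\)-graded perfectoid ring is precisely what Step~1 of the proof of \Cref{CatEquivPerfdPrism} deduces \emph{from} this proposition, via \Cref{graded-perfectoid-equiv}, so it cannot be invoked here. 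The paper's actual argument is direct and avoids graded Picard considerations altogether: from the prism axiom write \(p=a+b\) with \(a\in I^p\), \(b\in\phi(I)A\), extract the degree-0 parts \(a_0\in I^p\) and \(b_0\in\phi(I)A\) by \Cref{purity-homog-ideal}(1), invoke the proof of \cite[Lemma~3.6]{bhatt2022Prismsa} to conclude \(\phi(I)A=(b_0)\), and set \(d:=\phi^{-1}(b_0)\in A_0\). This gives the degree-0 generator directly, with no case analysis on \(g_0\). You should replace your sketch by this explicit construction.
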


\begin{proof}
Let \(a \in A_{\graded}\) be a homogeneous element of degree \(g\).
Then
\[
\phi(a) = a^p + p\,\delta(a) \in A_{pg},
\]
so \(\phi\) induces a \(G\)-graded ring homomorphism
\(\phi_{\graded} \colon A_{\graded} \to A_{\graded}^{[p]}\).

\smallskip
\noindent
\textbf{Continuity of \(\phi_{\graded}\).}
Let \(I\) be an open homogeneous ideal of \(A_{\graded}\).
Then there exists \(n \ge 1\) such that \(p^n \in I\).
For any \(a_1, \ldots, a_{n+1} \in I\), we have
\begin{align*}
    \delta(a_1 \cdots a_{n+1}) 
    &\equiv p\,\delta(a_1)\delta(a_2\cdots a_{n+1}) \pmod{I} \\
    &\equiv p^2\,\delta(a_1)\delta(a_2)\delta(a_3\cdots a_{n+1}) \\
    &\equiv \cdots \equiv p^n\,\delta(a_1)\cdots\delta(a_{n+1}) \pmod{I}.
\end{align*}
Hence \(\phi(I^n) \subseteq (I, p^n) = I\), so \(\phi\) is continuous.
In particular, \(\phi\) induces a morphism of pro-\(G\)-graded rings
\[
(A, A_{\graded}) \longrightarrow (A, A_{\graded}^{[p]}).
\]

\smallskip
\noindent
\textbf{Bijectivity of \(\phi_{\graded}\).}
The injectivity of \(\phi_{\graded}\) follows from that of \(\phi\),
and its surjectivity follows from the surjectivity of \(\phi\)
together with \cref{SurjectiveProGraded}.
To show that \(\phi_{\graded}^{-1}\) is continuous,
let \(J = (x_1, \ldots, x_r)\) be a finitely generated homogeneous ideal of \(A_{\graded}\) defining the adic topology.
Then
\[
\phi_{\graded}^{-1}(J^{rp}) 
\subseteq \phi^{-1}((x_1^p, \ldots, x_r^p)) 
\subseteq (J, p),
\]
which shows that \(\phi_{\graded}^{-1}\) is continuous.
Thus \(\phi_{\graded}\) is an isomorphism of \(G\)-graded topological rings.
In particular, \(G = G[1/p]\).

\smallskip
\noindent
\textbf{Proof of (2).}
Since \(p \in I^p + \phi(I)A\), there exist \(a \in I^p\) and \(b \in \phi(I)A\) such that \(p = a + b\).
Let \(a_0\) and \(b_0\) denote the degree-zero parts of \(a\) and \(b\), respectively.
Because \(I\), \(I^p\), and \(\phi(I)A\) are homogeneous,
we have \(a_0 \in I^p\) and \(b_0 \in \phi(I)A\) by \cref{purity-homog-ideal}(1).

By the proof of \cite{bhatt2022Prismsa}*{Lemma~3.6}, we have \(\phi(I)A = (b_0)\).
Hence \(I = (d)\), where \(d \defeq \phi^{-1}(b_0) \in A_0\).
Since \(\phi_{\graded}\) induces an isomorphism
\(\phi_0 \colon A_0 \to A_0\),
the ring \(A_0\) is a perfect \(\delta\)-ring.
As \(d\) is distinguished in \(A\), it remains distinguished in \(A_0\).
Finally, because \(A_0\) is \((p,d)\)-complete,
the pair \((A_0, (d))\) forms a perfect prism.
\end{proof}

In general, the topology of the cofiltered limit of pro-\(G\)-graded rings is written by the limit topology.
However, the following proposition says that the topology on the graded ring of Witt vectors is not so ridiculous.

\begin{proposition}\label{top-prism}
Let \((A, A_{\graded}, I)\) be a \(G\)-graded perfect prism with Frobenius lift \(\phi\).
Let \(J\) be a homogeneous ideal of definition of \(A_{\graded}\), and let \(x_1, \ldots, x_r \in J\) be homogeneous generators.
By \cite{Ill79}*{Ch.~0, Section~1.3 (1.3.16)}, there exists a ring homomorphism
\[
s \colon A \longrightarrow W(A)
\]
such that the composition
\[
A \xrightarrow{s} W(A) \xrightarrow{w} \prod_{\mathbb{N}} A
\]
coincides with \((\id, \phi, \phi^2, \ldots)\), where \(w\) denotes the ghost map.  
Using this map, the following hold:
\begin{enumerate}
    \item The homomorphism \(s\) induces a \(G\)-graded ring homomorphism
    \[
    s_{\graded} \colon A_{\graded} \longrightarrow W^{\graded}(A_{\graded}),
    \]
    where \(W^{\graded}(A_{\graded})\) is defined as in \cref{graded-witt-tilt}.
    
    \item Let \(J_{\infty} \defeq ([x_1], \ldots, [x_r]) \subseteq W^{\graded}(A_{\graded}/p)\).
    Then the topology on \(W^{\graded}(A_{\graded}/p)\) coincides with the \((J_{\infty}, p)\)-adic topology.
    
    \item The composition
    \[
    A_{\graded} \xrightarrow{s_{\graded}} W^{\graded}(A_{\graded})
      \longrightarrow W^{\graded}(A_{\graded}/p)
    \]
    is denoted by \(\psi_{\graded}\).  
    Then \(\psi_{\graded}\) is an isomorphism of \(G\)-graded topological rings.
\end{enumerate}
\end{proposition}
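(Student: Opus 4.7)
The plan is to establish (1), (2), (3) in this order.

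For (1), I would use the Witt-polynomial identity $w_n = \sum_{i=0}^n p^i s_i^{p^{n-i}}$ together with the defining property $w_n(s(a)) = \phi^n(a)$ to prove inductively that the $n$-th Witt component $s_n(a)$ of $s(a)$ lies in $A_{p^n g}$ whenever $a \in A_g$. The assumption $\delta(A_g) \subseteq A_{pg}$ gives $\phi(A_g) \subseteq A_{pg}$ via $\phi(a) = a^p + p\delta(a)$, hence $\phi^n(A_g) \subseteq A_{p^n g}$. The recurrence
\[
p^n s_n(a) = \phi^n(a) - \sum_{i=0}^{n-1} p^i s_i(a)^{p^{n-i}}
\]
then places $p^n s_n(a) \in A_{p^n g}$, and the $p$-torsion-freeness of perfect prisms together with the uniqueness of the topological homogeneous decomposition in $A$ (\cref{defn-homog-decomp}) promotes this to $s_n(a) \in A_{p^n g}$ itself, giving the desired $G$-graded ring homomorphism $s_{\graded}$.

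For (2), the key input is that $A_{\graded}/p$ is graded perfect, which follows from \cref{graded-perfect-prism}(1) after reduction modulo $p$; hence in $W^{\graded}(A_{\graded}/p)$ one has $p = V(1)$, $p^k W = V^k W$, the expansion $u = \sum_i p^i[u_i^{1/p^i}]$ for any $u = (u_0,u_1,\ldots)$, and the multiplicative formulas $[y]\cdot w = (y^{p^i} w_i)_i$ and $p^b[y] = V^b([y^{p^b}])$. By \cref{gr-top-lim-const} the fundamental neighborhoods of the topology in \cref{pro-graded-Witt} are of the form $W(\bar J^a) + V^k W^{\graded}(A_{\graded}/p)$. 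Using the explicit formulas above one computes that the $j$-th Witt component of any element of $(J_\infty,p)^N$ lies in $\bar J^{(N-j)p^j}$ for $j < N$ and is arbitrary for $j\ge N$; this yields the inclusion $(J_\infty, p)^N \subseteq W(\bar J^N) + V^N W^{\graded}$. For the reverse inclusion, taking $a = p^{N-1}$ and $k = N$, one writes $u \in W(\bar J^a)$ via its perfect expansion and uses $[y] \equiv \sum_\beta c_\beta \prod_j [\bar x_j]^{\beta_j} \pmod p$ (a consequence of $[a+b]\equiv[a]+[b]\pmod p$) to show that each summand $V^i[u_i] = p^i\sum c_\beta [\bar x^\beta] + p^{i+1}\xi_i$ lies in $(J_\infty,p)^N$, where the first term belongs to $(J_\infty,p)^{i+p^{N-1-i}} \subseteq (J_\infty,p)^N$ because $i + p^{N-1-i}\ge N$. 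The careful bookkeeping between the exponents $(N-j)p^j$ on the Witt-component side and the powers of $(J_\infty, p)$ on the ideal side is the main technical obstacle.

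For (3), the map $\psi_{\graded}$ is a $G$-graded ring homomorphism by (1) composed with reduction modulo $p$. Bijectivity is transported from the classical isomorphism $\psi \colon A \xrightarrow{\sim} W(A/p)$ attached to the perfect prism $(A, IA)$: given $w \in W^{\graded}(A_{\graded}/p)_g$, its preimage $a \in A$ admits a unique topological homogeneous decomposition $a = \sum_h a_h$, and by continuity of $\psi$ together with the fact that $\psi(a_h) \in W^{\graded}(A_{\graded}/p)_h$ (by (1)), uniqueness of the topological homogeneous decomposition of $w$ forces $a_h = 0$ for $h \neq g$, whence $a \in A_g$. For the topological statement, $\psi_{\graded}(x_i) \equiv [x_i] \pmod p$ in $W^{\graded}(A_{\graded}/p)$, so $\psi_{\graded}$ sends the $J$-adic topology on $A_{\graded}$ to the $(J_\infty, p)$-adic topology on $W^{\graded}(A_{\graded}/p)$, which by (2) agrees with the topology from \cref{pro-graded-Witt}. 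The bulk of the work is concentrated in the topological comparison of step (2).
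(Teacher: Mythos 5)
Your plan for (1) is correct and is essentially the same as the paper's argument: both hinge on the injectivity of the ghost map coming from the $p$-torsion-freeness of $A$, and on the observation that $W^{\graded}(A_{\graded})_g$ is exactly the preimage of $A_g \times A_{pg} \times \cdots$ under the ghost map. Your inductive recurrence just makes the paper's identity $w^{-1}(A_g \times A_{pg} \times \cdots) = W^{\graded}(A_{\graded})_g$ explicit; the only additional care needed (which you correctly flag) is invoking uniqueness of the topological homogeneous decomposition in $(A,A_{\graded})$ before dividing by $p^n$.

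For (2), your two-sided strategy is the right one, but you have the difficulty inverted and there is a concrete gap in the harder direction. The inclusion $(J_\infty,p)^N \subseteq W(\bar J^N)+V^N W^{\graded}$ that you work out is the \emph{easy} direction; the paper dismisses it with ``Clearly $J_\infty \subseteq W^{\graded}(J)$.'' The hard direction is showing some $W(\bar J^a)+V^k W^{\graded}$ is contained in $(J_\infty,p)^N$, and here your choice $a=p^{N-1}$ is not quite enough. When you write $u_i \in \bar J^{p^{N-1}}$ and then take $p^i$-th roots to compute $[u_i^{1/p^i}]$, the resulting monomials involve fractional powers of the generators, i.e.\ Teichm\"uller lifts $[\bar x_j^{1/p^i}]$ rather than powers of $[\bar x_j]$ themselves. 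To land in $J_\infty^m$ you must first reduce $\bar J^a$ to the ideal $(\bar x_1^{p^{m+n}},\ldots,\bar x_r^{p^{m+n}})$ via pigeonhole on the $r$ generators (so $a$ must be of the order $p^{m+n}r$, as in the paper), so that the $p^i$-th roots of the generating monomials are again integer powers $[\bar x_k]^{p^{m+n-i}}$. Relatedly, your handling of the remainder $p^{i+1}\xi_i$ is incomplete: $\xi_i$ is a priori just an element of $W^{\graded}(A_{\graded}/p)$, so $p^{i+1}\xi_i$ does not lie in $(J_\infty,p)^N$ for small $i$ without further control; the paper avoids this by tracking that $\xi_i$ remains in $W^{\graded}((\bar x_k^{p^{m+n}}))$ (which it does, since this ideal is closed under dividing by $p$ in $W$) and iterating exactly $n$ times. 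You yourself identify ``the careful bookkeeping'' as the main obstacle, and that assessment is accurate.

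For (3), you take a genuinely different and, I think, cleaner route than the paper. The paper constructs the inverse $\varphi_{\graded}$ explicitly via the graded ghost map $\bar w_{\graded}$ and checks $\varphi_{\graded}\circ\psi_{\graded}=\id$ and $\psi_{\graded}\circ\varphi_{\graded}=\id$ by a direct computation; you instead transport bijectivity from the classical isomorphism $\psi\colon A \xrightarrow{\sim} W(A/p)$ using uniqueness of topological homogeneous decompositions (\cref{defn-homog-decomp}), which buys you the ring-theoretic bijection of $\psi_{\graded}$ essentially for free. This argument is sound: if $w$ is homogeneous of degree $g$ and $a=\psi^{-1}(w)$ with decomposition $a=\sum_h a_h$, then $w=\sum_h \psi_{\graded}(a_h)$ is a topological homogeneous decomposition in the pro-graded ring $(W(A/p),W^{\graded}(A_{\graded}/p))$ (by \cref{pro-graded-Witt}), forcing $a_h=0$ for $h\neq g$. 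However, you should be a bit more careful with the topological statement: saying $\psi_{\graded}(x_i)\equiv[x_i]\pmod p$ gives continuity of $\psi_{\graded}$ but not openness. The quick fix is that, since $\psi_{\graded}$ is now a ring isomorphism and one has the two containments $\psi_{\graded}(x_i)\in(J_\infty,p)$ and $[x_i]\in\psi_{\graded}(J)W^{\graded}$ (the latter using $[x_i]=\psi_{\graded}(x_i)-p\eta_i$ and $p=\psi_{\graded}(p)$), one gets $\psi_{\graded}(J)W^{\graded}=(J_\infty,p)$ exactly, hence $\psi_{\graded}(J^n)=(J_\infty,p)^n$ and openness follows; combined with (2) this gives the topological isomorphism. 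The paper instead verifies continuity of both $\psi_{\graded}$ and its explicit inverse $\varphi_{\graded}$. Either route works; yours is shorter once the one-line openness argument is added.
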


\begin{proof}
Since \(p\) is topologically nilpotent in \(A_{\graded}\) by definition, we may assume \(p \in J\).

\smallskip
\noindent
\textbf{(1) Induced graded map.}
For every \(g \in G\), since
\[
w^{-1}(A_g \times A_{pg} \times \cdots) = W^{\graded}(A_{\graded})_g
\]
and \(\phi\) is \(G\)-graded, the homomorphism \(s\) induces a \(G\)-graded ring homomorphism
\(s_{\graded} \colon A_{\graded} \to W^{\graded}(A_{\graded})\).

\smallskip
\noindent
\textbf{(2) Description of the topology.}
To show that \(W^{\graded}(A_{\graded}/p)\) is equipped with the \((J_{\infty}, p)\)-adic topology, consider the system
\[
\{\,W^{\graded}(J^m),\, p^n \mid m,n \ge 1\,\},
\]
where
\[
W^{\graded}(J^m)
  \defeq \Ker\!\left(W^{\graded}(A_{\graded}/p)
      \longrightarrow W^{\graded}(A_{\graded}/(J^m + p))\right).
\]
Since the topology on \(A_{\graded}/p\) is the \(J\)-adic topology,
this system defines the topology on \(W^{\graded}(A_{\graded}/p)\)
by \cref{pro-graded-Witt}.
Clearly \(J_{\infty} \subseteq W^{\graded}(J)\).

Fix \(m, n \ge 1\).
Since \(J\) is generated by \(r\) elements,
\[
J^{p^{m+n}r} \subseteq (x_1^{p^{m+n}}, \ldots, x_r^{p^{m+n}}).
\]
Take any \(a \in W^{\graded}((x_1^{p^{m+n}}, \dots, x_r^{p^{m+n}}))\).
We will show that \(a \in ([x_1^{p^m}], \dots, [x_r^{p^m}], p^n)\).
Write
\[
a = (a_0, \dots, a_{n-1}) + p^n\alpha
  = [a_0] + V[a_1] + \cdots + V^{n-1}[a_{n-1}] + p^n\alpha
\]
for some \(\alpha \in W^{\graded}(A_{\graded}/p)\)
since \(A_{\graded}/p\) is perfect.
For each \(a_i\), we can write
\([a_i] = \sum_{k=1}^r [a_{i,k}][x_k^{p^{m+n}}] + V\alpha_i\)
for some \(\alpha_i \in W^{\graded}((x_1^{p^{m+n}}, \dots, x_r^{p^{m+n}}))\).
Because
\[
V^i([a_{i,k}][x_k^{p^{m+n}}])
  = V^i([a_{i,k}] \cdot F^i[x_k^{p^{m+n-i}}])
  = V^i[a_{i,k}] \cdot [x_k^{p^{m+n-i}}],
\]
we have
\[
V^i[a_i]
  = \sum_{k=1}^r V^i[a_{i,k}] \cdot [x_k^{p^{m+n-i}}]
    + V^{i+1}\alpha_i.
\]
Then \(a - \sum_{k=1}^r [a_{0,k}][x_k^{p^{m+n}}] \in VW^{\graded}((x_1^{p^{m+n}}, \dots, x_r^{p^{m+n}}))\),
so we may assume \(a_0 = 0\).
Iterating this process for \(i = 0, \dots, n-1\),
we obtain
\[
W^{\graded}((x_1^{p^{m+n}}, \ldots, x_r^{p^{m+n}}))
  \subseteq ([x_1^{p^m}], \ldots, [x_r^{p^m}], p^n)
  \subseteq (J_\infty^{p^m}, p^n).
\]
Thus \(W^{\graded}(J^{p^{m+n}r}) \subseteq (J_\infty^{p^m}, p^n)\), as claimed.

\smallskip
\noindent
\textbf{(3) The isomorphism \(\psi_{\graded}\).}
To show continuity, note that for \(a \in J\),
\[
\psi_{\graded}(a)
  = [a] + V\alpha
  = [a] + pF^{-1}\alpha
  \equiv [a] \pmod{p}
\]
for some \(\alpha \in W^{\graded}(A_{\graded}/p)\).
Hence \(\psi_{\graded}(J) \subseteq (J_\infty, p)\),
so \(\psi_{\graded}\) is continuous.

To construct the inverse, consider the map
\[
\var{w}_{\graded} \colon
W^{\graded}(A_{\graded}/p)
  \longrightarrow
A_{\graded}/p \times A_{\graded}/p^2 \times \cdots
\]
induced by the ghost map;
it is injective by \cite{Yoshikawa2025Fedder}*{Lemma~3.2}.
Write
\(\var{w}_{\graded} = (\var{w}_{0,\graded}, \var{w}_{1,\graded}, \ldots)\).
Define
\[
\varphi_{n,\graded}
  \defeq \phi^{-n} \circ \var{w}_{n,\graded}
  \colon W^{\graded}(A_{\graded}/p)
  \longrightarrow A_{\graded}/p^{n+1}.
\]
Since the ghost map is continuous, each \(\varphi_{n,\graded}\) is continuous and satisfies
\[
\varphi_{n,\graded}(\alpha)
  \equiv \varphi_{n-1,\graded}(\alpha)
  \pmod{p^n}.
\]
Indeed,
\begin{align*}
\varphi_{n,\graded}(\alpha)
  &= \phi^{-n}(a_0^{p^n} + pa_1^{p^{n-1}} + \cdots + p^n a_{n-1}) \\
  &\equiv \phi^{-(n-1)}(a_0^{p^{n-1}} + \cdots + p^{n-1} a_{n-2})
     \pmod{p^n} \\
  &= \varphi_{n-1,\graded}(\alpha).
\end{align*}
Thus these maps define
\[
\varphi_{\graded} \defeq (\varphi_{n,\graded})_n
  \colon W^{\graded}(A_{\graded}/p)
  \longrightarrow \varprojlim\nolimits_n A_{\graded}/p^n
  \simeq A_{\graded}
\]
as a morphism of \(G\)-graded topological rings.

By the definition of \(s\),
\(\varphi_{n,\graded} \circ \psi_{\graded}\colon A \to A/p^n\)
is the natural surjection,
so \(\varphi_{\graded} \circ \psi_{\graded} = \id\).
Conversely, since each \(\varphi_{n,\graded}(\alpha)\)
is the unique element mapping under \(\phi^n\)
to \(\var{w}_{n,\graded}(\alpha)\),
we have
\[
\var{w}_{n,\graded} \circ
  \psi_{\graded} \circ
  \varphi_{\graded}(\alpha)
  = a_0^{p^n} + pa_1^{p^{n-1}} + \cdots + p^n a_{n-1}
\]
for \(\alpha = (a_0, a_1, \ldots, a_{n-1})
 \in W^{\graded}(A_{\graded}/p)\).
Since \(\var{w}_{n,\graded}\) is injective,
\(\psi_{\graded} \circ \varphi_{\graded} = \id\).
Hence \(\psi_{\graded}\) is an isomorphism of \(G\)-graded topological rings.
\end{proof}

\subsection{The proof of categorical equivalence}

Finally, we establish the categorical equivalence (\Cref{CatEquivPerfdPrism}).
One of the main difficulty is that we have to consider not only the algebraic structure but also the topology on them, i.e., we want to deduce some isomorphisms on \emph{topological rings}.
Because of this, our proof is not just a mimic of the proof of the non-graded case in \cite{bhatt2022Prismsa} but more constructible proof to compare the topology.

\begin{theorem} \label{CatEquivPerfdPrism}
The following two categories are equivalent:
\begin{itemize}
    \item the category of pro-\(G\)-graded perfectoid rings \((R,R_{\graded})\);
    \item the category of pro-\(G\)-graded perfect prisms \((A,A_{\graded},I)\).
\end{itemize}
Via the forgetful functor $(R,R_{\gr}) \mapsto R$ and $(A,A_{\gr},I) \mapsto (A,I)$, the functors giving this categorical equivalence is compatible with that in \cite{bhatt2022Prismsa}*{Theorem~3.10}.    
\end{theorem}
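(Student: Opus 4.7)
The plan is to define functors in both directions, verify they land in the right target categories, and check they are mutually inverse, using the Bhatt--Scholze correspondence \cite{bhatt2022Prismsa} as a template on the underlying rings. The forward functor $F$ sends a pro-$G$-graded perfect prism $(A, A_{\graded}, I)$ to the quotient $(A/I, A_{\graded}/\overline{I \cap A_{\graded}})$; the backward functor $G$ sends a pro-$G$-graded perfectoid ring $(R, R_{\graded})$ to $(A_{\inf}(R), A_{\inf}^{\graded}(R_{\graded}), \ker(\theta))$ using the constructions of \cref{SharpMapThetaMap}. Functoriality in morphisms will be automatic from the functoriality of quotients (\cref{quot-pro-gra}) and of the tilt/Witt/theta constructions (\cref{pro-graded-tilt,pro-graded-Witt,SharpMapThetaMap}).

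For $F$: by \cref{graded-perfect-prism} one has $I=(d)$ for some $d\in A_0$ and $(A_0,(d))$ is a non-graded perfect prism, and \cref{quot-pro-gra}(2) produces the desired pro-$G$-graded ring. The underlying ring $A/dA$ is perfectoid by Bhatt--Scholze, and its degree-zero part $A_0/\overline{dA_0}=A_0/dA_0$ is perfectoid by Bhatt--Scholze applied to $(A_0,(d))$. Hence \cref{graded-perfectoid-equiv} shows the output is pro-$G$-graded perfectoid.

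For $G$: \cref{SharpMapThetaMap} provides the pro-$G$-graded morphism $(\theta,\theta_{\graded})$, and $\theta_{\graded}$ is surjective by \cref{SurjectiveProGraded} because $\theta$ is. The crucial step is to produce a degree-zero distinguished element generating $\ker(\theta)$. Using \cref{graded-perfectoid-equiv}(3'), pick $\varpi\in R_0$ with $p\in\varpi^p R_0$; since $R_0$ is perfectoid, this lifts to a compatible system of $p$-power roots, giving $\varpi^\flat\in (R_0)^\flat$. Via the direct identifications $(R^{\flat,\graded})_0\cong (R_0)^\flat$ and $A_{\inf}^{\graded}(R_{\graded})_0\cong A_{\inf}(R_0)$, which follow from the construction in \cref{graded-witt-tilt,pro-graded-Witt}, the Bhatt--Scholze theory applied to the non-graded perfect prism $(A_{\inf}(R_0),\ker(\theta_{R_0}))$ produces a distinguished element $\xi\in A_{\inf}^{\graded}(R_{\graded})_0$ generating $\ker(\theta)$ after base change to $A_{\inf}(R)$. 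The $\delta$-structure on $W^{\graded}(R^{\flat,\graded})$ satisfies $\delta(A_g)\subseteq A_{pg}$ by direct inspection of Witt-vector Frobenius lifts, so we obtain a pro-$G$-graded perfect prism.

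It remains to check that $F$ and $G$ are mutually inverse. For $F\circ G\simeq\id$: the underlying isomorphism $A_{\inf}(R)/\ker(\theta)\cong R$ is Bhatt--Scholze, while \cref{quot-pro-gra}(1) combined with the surjectivity of $\theta_{\graded}$ identifies $A_{\inf}^{\graded}(R_{\graded})/\overline{\xi\,A_{\inf}^{\graded}(R_{\graded})}\cong R_{\graded}$. For $G\circ F\simeq\id$: Bhatt--Scholze recovers the underlying ring, and \cref{top-prism}---specifically the isomorphism $\psi_{\graded}\colon A_{\graded}\xrightarrow{\sim}W^{\graded}(A_{\graded}/p)$ of $G$-graded topological rings---identifies the original graded structure with the one built from $A_{\inf}^{\graded}$ of the quotient. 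The compatibility with the Bhatt--Scholze equivalence through the forgetful functor is then visible from the definitions of $F$ and $G$. The main obstacle is this topological matching in $G\circ F$: the topology on $A_{\inf}^{\graded}(R_{\graded})$ is a limit topology rather than a clean adic topology, and confirming it agrees with the given topology on $A_{\graded}$ requires the explicit description in \cref{top-prism}(2) of the topology via generators $([x_1],\ldots,[x_r],p)$ to bridge the Witt-vector topology and the $G$-graded adic topology on $A_{\graded}$.
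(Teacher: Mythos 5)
Your overall approach coincides with the paper's: define $\mathcal{F}$ by passing to the quotient along a degree-zero distinguished generator, define $\mathcal{G}$ via the graded tilt/Witt/theta constructions of \cref{pro-graded-tilt,pro-graded-Witt,SharpMapThetaMap}, and check the two composites via \cref{top-prism}. Your route to the degree-zero distinguished element (identify $A_{\inf}^{\graded}(R_{\graded})_0 \cong A_{\inf}(R_0)$, pull a distinguished $\xi$ from the non-graded theory for $R_0$, then invoke rigidity of distinguished generators to see $\xi$ generates $\ker(\theta_R)$) is a valid variant of the paper's direct formula $d=p-[\varpi^\flat]^p\widetilde{x}$, and identifying $G=G[1/p]$ via \cref{graded-perfectoid-equiv,graded-perfect-prism} as you implicitly do is fine.

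However, your verification of the two natural isomorphisms leaves two genuine gaps, both topological. For $\mathcal{F}\circ\mathcal{G}\simeq\id$: \cref{quot-pro-gra}(1) plus surjectivity of $\theta_{\graded}$ identifies the algebraic kernel, but you still need $\theta_{\graded}$ to be \emph{open}, so that the induced map $A_{\graded}/(\xi)\to R_{\graded}$ is a homeomorphism; the paper proves this by exhibiting inclusions $J^{sp^n}\subseteq(\theta_{\graded}(W^{\graded}(J^\flat)),p^n)$, and this is not automatic. For $\mathcal{G}\circ\mathcal{F}\simeq\id$: the isomorphism $\psi_{\graded}\colon A_{\graded}\xrightarrow{\sim}W^{\graded}(A_{\graded}/p)$ from \cref{top-prism}(3) only reduces the problem to showing $A_{\graded}/(p)\cong R^{\flat,\graded}_{\graded}$ as graded topological rings; you do not construct this intermediate comparison (the map $\sigma_{\graded}$ built from $\varprojlim_\phi A/p\to\varprojlim_F A/(p,d)$ in the paper), nor verify it is a homeomorphism, which in the paper requires the graded topological Nakayama \cref{graded-top-nakayama}. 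There is also a smaller omission in the forward direction: passing to $(A/(d),A_{\graded}/(d))$ via \cref{quot-pro-gra}(2) requires knowing that $\overline{dA_{\graded}}=dA_{\graded}$, which the paper deduces from derived completeness (\citeSta{0G3I}), reducedness of $A/dA$, and \cref{purity-homog-ideal}; you use this silently when writing $A_0/\overline{dA_0}=A_0/dA_0$.
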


\begin{proof}
We may assume \(G=G[1/p]\) by \cref{graded-perfectoid-equiv,graded-perfect-prism}.
The last assertion follows from the proof below.
Let \(\mathcal{C}_1\) denote the category of \(G\)-graded perfectoid rings \((R,R_{\graded})\), and \(\mathcal{C}_2\) the category of \(G\)-graded perfect prisms \((A,A_{\graded},I)\).

\smallskip
\noindent\textbf{Step 1. Constructing the functor \(\mathcal{F}\colon \mathcal{C}_2\to\mathcal{C}_1\).}
Let \((A,A_{\graded},I)\) be a \(G\)-graded perfect prism.
By \cref{graded-perfect-prism}, there exists \(d\in A_0\) with \(I=(d)\).
Since \(I=dA\cong A\), the \(A\)-module \(A/dA\) is derived complete.  
Using \citeSta{0G3I}, the closure \(\overline{dA}\) in \(A\) is nilpotent in \(A/dA\).  
As \(A/dA\) is perfectoid (\Cref{rmk:first-prism}) and hence reduced, we get \(\overline{dA}=dA\).
By \cref{purity-homog-ideal},
\[
\overline{dA_{\graded}}
= \overline{dA}\cap A_{\graded}
= dA\cap A_{\graded}
= dA_{\graded}.
\]
Hence, by \cref{quot-pro-gra}, the pair \((A/dA,A_{\graded}/dA)\) is a pro-\(G\)-graded ring with the quotient topology.  
Since \(A/dA\) is perfectoid as noted above, the pro-\(G\)-graded ring \((A/dA,A_{\graded}/(d))\) is perfectoid.
We define
\[
\mathcal{F}(A,A_{\graded},I)\defeq(A/dA,A_{\graded}/dA_{\graded)}.
\]

\smallskip
\noindent\textbf{Step 2. Constructing the functor \(\mathcal{G}\colon \mathcal{C}_1\to\mathcal{C}_2\).}
Let \((R,R_{\graded})\) be a \(G\)-graded perfectoid ring.  
Choose a finitely generated homogeneous ideal \(J\subset R_{\graded}\) defining the topology, with \(p\in J\).
Because \(R_0\) is perfectoid, there exists \(\varpi\in R_0\) with a compatible system \(\{\varpi^{1/p^e}\}_e\) and \(x\varpi^p=p\) for some \(x\in R_0^\times\).

By \cref{pro-graded-tilt}, the pair \((R^\flat,R^{\flat,\graded}_{\graded})\) is a pro-\(G\)-graded ring, where \(R^{\flat,\graded}_{\graded}\) is as in \cref{graded-witt-tilt}(1).  
Let \(\pi_e\colon R^{\flat,\graded}_{\graded}\to (R_{\graded}/pR_{\graded})^{[p^{-e}]}\) and \(\pi'_e\colon R^\flat\to (R/pR)_e\) be the canonical projections; these define the limit topologies on \(R^\flat\) and \(R^{\flat,\graded}_{\graded}\).  
Arguing as in the standard proof for perfectoid rings, one obtains
\[
\Ker(\pi_e)
  =(\varpi^\flat)^{p^{e+1}}R^\flat_{\graded}.
\]
Pulling back \(J/pR_{\graded}\) along \(\pi_0\) gives the \(J^\flat\)-adic topology on \(R^{\flat,\graded}_{\graded}\), where \(J^\flat\supset (\varpi^\flat)^p\) and \(\pi_0(J^\flat)=J\bmod p\).  
Since the map \(\sharp_{0,\graded}\colon R^{\flat,\graded}_{\graded}\to R_{\graded}/pR_{\graded}\) is surjective (by \cref{graded-perfectoid-equiv}), \(J^\flat\) is finitely generated by homogeneous lifts of generators of \(J\) modulo \((\varpi^\flat)^p\).

Now define
\[
(A,A_{\graded})
  \defeq (A_{\inf}(R),A_{\inf}^{\graded}(R_{\graded}))
  = (W(R^\flat),W^{\graded}(R^{\flat,\graded}_{\graded})).
\]
This is a pro-\(G\)-graded ring by \cref{pro-graded-Witt} and \cref{SharpMapThetaMap}, and its graded structure is
\[
A_g
  = \{(a_0,a_1,\ldots)\in W^{\graded}(R^{\flat,\graded}_{\graded})
     \mid a_i\in R^\flat_{p^i g}\ \forall i\ge0\}.
\]

Let \(\theta_{\graded}\colon A_{\graded}\to R_{\graded}\) be the graded theta map from \cref{SharpMapThetaMap}.  
It is surjective because each finite-level map \(\theta_{n,\graded}\colon W_n(R^{\flat,\graded}_{\graded})\to R_{\graded}/p^n\) is surjective by induction on \(n\).  
On degree \(0\), \(\theta_0\colon W(R_0^\flat)\to R_0\) is Fontaine’s theta map, which is surjective since \(R_0\) is perfectoid.  
Choose a lift \(\widetilde{x}\in A_0\) of \(x\in R_0^\times\) with \(\theta_0(\widetilde{x})=x\), and define
\[
d\defeq p-[\varpi^\flat]^p\,\widetilde{x}\in A_0.
\]
By \cite{bhatt2018Integral}*{Lemma~3.10}, we have \(\Ker(\theta)=dA\) and \(\Ker(\theta_0)=dA_0\).

The topology on \(A_{\graded}\) is generated by the ideals
\[
\{\,W^{\graded}((J^\flat)^m),\ p^n \mid m,n\ge 1\,\},
\]
and since \([\varpi^\flat]\) is topologically nilpotent, so is \(d\).  
If \(x_1,\ldots,x_r\) generate \(J^\flat\) homogeneously, then by \cref{top-prism}(3) we have an isomorphism
\[
A_{\graded}\xrightarrow{\ \psi_{\graded}\ } W^{\graded}(A_{\graded}/p)
  \cong W^{\graded}(R^{\flat,\graded}_{\graded}),
\]
whose target has the \((J_\infty,p)\)-adic topology with
\(J_\infty\defeq([x_1],\ldots,[x_r])\).  
Hence \(A_{\graded}\) is a \(G\)-graded adic ring.  
The Frobenius lift
\[
\phi\colon A=W(R^\flat)\to A,\qquad (a_0,a_1,\ldots)\mapsto(a_0^p,a_1^p,\ldots),
\]
preserves the grading and induces \(\phi_{\graded}\colon A_{\graded}\to A_{\graded}^{[p]}\).  
Therefore the triple \((A,A_{\graded},(d))\) is a \(G\)-graded perfect prism, and we define
\[
\mathcal{G}(R,R_{\graded})\defeq(A,A_{\graded},(d)).
\]

\smallskip
\noindent\textbf{Step 3. Showing \(\mathcal{F}\circ\mathcal{G}\simeq\id\).}
For \((R,R_{\graded})\), let \(\mathcal{G}(R,R_{\graded})=(A,A_{\graded},(d))\).  
Since \(\Ker(\theta)=dA\), by \cref{purity-homog-ideal} we also have \(\Ker(\theta_{\graded})=dA_{\graded}\).  
Thus it suffices to show that \(\theta_{\graded}\colon A_{\graded}/(d)\to R_{\graded}\) is a homeomorphism.  
By construction, \(\theta_{\graded}(J_\infty)\subseteq(J,p)\), so \(\theta_{\graded}\) is continuous.  
Let \(y_1,\ldots,y_s\) be homogeneous generators of \(J\).  
Since \(x_i=\sharp_{\graded}(x_i^\flat)\) lift generators of \(J^\flat\), we have \(y_i^{p^n}=\theta_{\graded}([x_i])\).  
Hence \(J^{sp^n}\subseteq(y_1^{p^n},\ldots,y_s^{p^n})\subseteq(\theta_{\graded}(W^{\graded}(J^\flat)),p^n)\) for all \(n\), so \(\theta_{\graded}\) is open.  
Therefore \(A_{\graded}/(d)\xrightarrow{\sim}R_{\graded}\) as topological \(G\)-graded rings, proving \(\mathcal{F}\circ\mathcal{G}\simeq\id\).

\smallskip
\noindent\textbf{Step 4. Showing \(\mathcal{G}\circ\mathcal{F}\simeq\id\).}
Let \((A,A_{\graded},I)\) be a \(G\)-graded perfect prism and set \(\mathcal{F}(A,A_{\graded},I)=(R,R_{\graded})=(A/(d),A_{\graded}/(d))\).  
Let \(\mathcal{G}(R,R_{\graded})=(A',A'_{\graded},(d'))\).  
We first show \((A'/(p),A'_{\graded}/(p))\simeq(A/(p),A_{\graded}/(p))\).
Consider
\[
\sigma\colon A/(p)\cong(\varprojlim_{\phi}A)/p
  \longrightarrow\varprojlim_{F}(A/(p,d))
  \cong R^\flat=A'/(p),
\]
and the induced \(G\)-graded map \(\sigma_{\graded}\colon A_{\graded}/(p)\to A'_{\graded}/(p)\).
Take a finitely generated homogeneous ideal \(J\subset A_{\graded}\) defining its topology, with \(p,d\in J\).
Let \(J_R\defeq JR\) and \(J^\flat\) be the unique ideal of \(R^\flat\) with \((\varpi^\flat)^p\subset J^\flat\) and \(J^\flat R^\flat/(\varpi^\flat)^p=JR/(p)\).
Then \(\sigma(JA/(p))=J^\flat\), so \(\sigma_{\graded}\) is continuous and open.
By construction, the induced map \(A_{\graded}/(d,p)\to A'_{\graded}/(p,d')\) is an isomorphism, hence so is \(A_{\graded}/(p,d^n)\to A'_{\graded}/(p,(d')^n)\) by induction.
By \cref{graded-top-nakayama}, \(\sigma\) is an isomorphism, and thus
\[
(\sigma,\sigma_{\graded})\colon
(A/(p),A_{\graded}/(p))
  \xrightarrow{\ \sim\ }
(A'/(p),A'_{\graded}/(p)).
\]
Hence
\[
A_{\graded}\xrightarrow{\ \psi_{\graded}\ }
  W^{\graded}(A_{\graded}/p)
  \xrightarrow{\,W^{\graded}(\sigma_{\graded})\,}
  W^{\graded}(A'_{\graded}/p)
  =W^{\graded}(R^\flat_{\graded})
  =A'_{\graded}
\]
is an isomorphism of \(G\)-graded topological rings (with \(\psi_{\graded}\) as in \cref{top-prism}(3)).
This induces an isomorphism \((A,A_{\graded},(d))\xrightarrow{\sim}(A',A'_{\graded},(d'))\) of \(G\)-graded perfect prisms.  
Hence \(\mathcal{G}\circ\mathcal{F}\simeq\id\).

\smallskip
\noindent\textbf{Conclusion.}
The functors \(\mathcal{F}\) and \(\mathcal{G}\) are quasi-inverse equivalences between the two categories.
\end{proof}


\begin{corollary}[\Cref{intro:cat-equiv}] \label{cat-equiv}
The following two categories are equivalent:
\begin{itemize}
    \item the category of \(G\)-graded perfectoid rings $R$;
    \item the category of \(G\)-graded perfect prisms \((A,I)\).
\end{itemize}
This categorical equivalence is compatible with the usual one in \cite[Theorem 3.3]{bhatt2022Prismsa} through the completion functors \(R \mapsto R^{\wedge p}\) and \((A, I) \mapsto (A^{\wedge(p, I)}, I)\).
\end{corollary}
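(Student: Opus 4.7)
\medskip
\noindent
\textbf{Proof proposal.}
The plan is to deduce this corollary directly from \Cref{CatEquivPerfdPrism} by matching up adic topologies and then passing through the two ``pro-vs-gradedwise-complete'' equivalences recorded in \Cref{first-cat-equiv} and \Cref{rmk:first-prism}. First I would observe that the functor \(R \mapsto (R^{\wedge p}, R)\) identifies \(G\)-graded perfectoid rings with pro-\(G\)-graded perfectoid rings whose structure graded ring carries the \(p\)-adic topology, and dually \((A,I) \mapsto (A^{\wedge(p,I)}, A, I)\) identifies \(G\)-graded perfect prisms with pro-\(G\)-graded perfect prisms whose structure graded ring carries the \((p,I)\)-adic topology. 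Thus it suffices to show that the equivalence \(\mathcal{F}, \mathcal{G}\) constructed in \Cref{CatEquivPerfdPrism} restricts to an equivalence between these two subcategories.

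Next I would verify this restriction by tracing through the constructions. For the functor \(\mathcal{F}\): if \((A, A_{\graded}, (d))\) is \((p,d)\)-adic, then the quotient topology on \(R_{\graded} = A_{\graded}/(d)\) is generated by the image of \((p,d)\), i.e.\ is \(p\)-adic, so \(\mathcal{F}\) lands in the \(p\)-adic subcategory. For the functor \(\mathcal{G}\): starting from \(R_{\graded}\) with its \(p\)-adic topology, the ideal \(J = (p)\) satisfies \(J^\flat = ((\varpi^\flat)^p)\), so the constructed topology on \(A_{\graded}\) is generated by \(\{W^{\graded}(((\varpi^\flat)^p)^m), p^n\}\), which by \Cref{top-prism}(2) is the \(([\varpi^\flat], p)\)-adic topology. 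Because \(d = p - [\varpi^\flat]^p \widetilde{x}\) with \(\widetilde{x}\) a unit, we have \([\varpi^\flat]^p \in (p, d)\) and \(p \in ([\varpi^\flat], d)\), so this coincides with the \((p, d)\)-adic topology. Hence \(\mathcal{G}\) lands in the \((p,I)\)-adic subcategory.

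Combining the previous two paragraphs yields the desired categorical equivalence. For the compatibility assertion, I would note that the forgetful functors \((R, R_{\graded}) \mapsto R^{\wedge p}\) and \((A, A_{\graded}, I) \mapsto (A^{\wedge(p,I)}, I)\) on the pro-graded categories are, by inspection of \Cref{CatEquivPerfdPrism}, compatible with \(\mathcal{F}\) and \(\mathcal{G}\): the map \(\mathcal{F}\) is induced by the underlying quotient \(A/(d)\), while \(\mathcal{G}\) restricts to the Fontaine-style construction \((W(R^\flat), (d))\) on \(p\)-adic completions. Both of these coincide with the usual Bhatt--Scholze functors in \cite[Theorem 3.3]{bhatt2022Prismsa}, so the diagram of equivalences commutes up to natural isomorphism.

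The main technical obstacle is the topology check in the second paragraph, specifically verifying that the \(([\varpi^\flat], p)\)-adic and \((p, d)\)-adic topologies on \(A_{\graded}\) agree; this hinges on the distinguished element \(d\) satisfying \(p \equiv [\varpi^\flat]^p \widetilde{x} \pmod d\) together with the topological nilpotence of \([\varpi^\flat]\). Once this is in hand, all remaining claims are essentially formal consequences of the functor equivalences already established.
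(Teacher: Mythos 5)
Your proposal is correct and follows the same strategy as the paper, which proves the corollary by the one-line citation of \Cref{first-cat-equiv}, \Cref{rmk:first-prism}, and \Cref{CatEquivPerfdPrism}. The extra work you do---checking that $\mathcal{F}$ carries $(p,d)$-adic prisms to $p$-adic perfectoid rings via the quotient topology, and that $\mathcal{G}$ reproduces the $(p,d)$-adic topology from the $p$-adic one via the identity $(p,d) = ([\varpi^\flat]^p,p)$---is exactly the verification the paper leaves implicit, so you are filling in detail rather than taking a different route.
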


\begin{proof}
It follows from \cref{first-cat-equiv,rmk:first-prism,CatEquivPerfdPrism}.
\end{proof}


\begin{thebibliography}{Bha+24}
    \expandafter\ifx\csname natexlab\endcsname\relax\def\natexlab#1{#1}\fi
    \def\au#1{#1} \def\ed#1{#1} \def\yr#1{#1}\def\at#1{#1}\def\jt#1{{#1}}
    \def\bt#1{#1}\def\bvol#1{\textbf{#1}} \def\vol#1{#1} \def\pg#1{#1}
    \def\publ#1{#1}\def\arxiv#1{#1}\def\org#1{#1}\def\st#1{\textit{#1}}\def\series#1{#1}
    
    
    \bibitem[And18]{andre2018Conjecture}
    {\au{Y. Andr\'{e}}}, \at{\textit{La Conjecture du Facteur Direct}}, \jt{Publications math\'ematiques de l'IH\'ES}, \bvol{127}(1) (\yr{2018}) \pg{{71}--{93}}.
    
    \bibitem[Bha+24]{bhatt2024Perfectoid}
    {\au{B. Bhatt}, \au{L. Ma}, \au{Z. Patakfalvi}, \au{K. Schwede}, \au{K. Tucker}, \au{J. Waldron}, and \au{J. Witaszek}}, \at{\textit{Perfectoid Pure Singularities}}, preprint, (\yr{2024}). arXiv:\arxiv{\href{http://arxiv.org/abs/2409.17965}{2409.17965}}.
    
    \bibitem[Bha18]{bhatt2018Geometric}
    {\au{B. Bhatt}}, \at{\textit{Geometric Aspects of P-Adic Hodge Theory: Prismatic Cohomology}}, (\yr{2018}). \url{http://www-personal.umich.edu/~bhattb/teaching/prismatic-columbia/}.
    
    \bibitem[BIM19]{bhatt2019Regular}
    {\au{B. Bhatt}, \au{S. B. Iyengar}, and \au{L. Ma}}, \at{\textit{Regular Rings and Perfect(oid) Algebras}}, \jt{Communications in Algebra}, \bvol{47}(6) (\yr{2019}) \pg{{2367}--{2383}}.
    
    \bibitem[BMS18]{bhatt2018Integral}
    {\au{B. Bhatt}, \au{M. Morrow}, and \au{P. Scholze}}, \at{\textit{Integral $p$-Adic Hodge Theory}}, \jt{Publications math\'ematiques de l'IH\'ES}, \bvol{128}(1) (\yr{2018}) \pg{{219}--{397}}.
    
    \bibitem[BS22]{bhatt2022Prismsa}
    {\au{B. Bhatt} and \au{P. Scholze}}, \at{\textit{Prisms and Prismatic Cohomology}}, \jt{Annals of Mathematics}, \bvol{196}(3) (\yr{2022}) \pg{{1135}--{1275}}.
    
    \bibitem[\v{C}S24]{CS2024Purity}
    {\au{K. \v{C}esnavi\v{c}ius} and \au{P. Scholze}}, \at{\textit{Purity for Flat Cohomology}}, \jt{Annals of Mathematics}, \bvol{199}(1) (\yr{2024}) \pg{{51}--{180}}.
    
    \bibitem[GR18]{gabber2018Foundations}
    {\au{O. Gabber} and \au{L. Ramero}}, \at{\textit{Foundations for Almost Ring Theory -- Release 7.5}}, preprint, (\yr{2018}). arXiv:\arxiv{\href{http://arxiv.org/abs/math/0409584}{math/0409584}}.
    
    \bibitem[GW78]{goto1978Graded}
    {\au{S. Goto} and \au{K. Watanabe}}, \at{\textit{On Graded Rings, I}}, \jt{Journal of the Mathematical Society of Japan}, \bvol{30}(2) (\yr{1978}) \pg{{179}--{213}}.
    
    \bibitem[Ill79]{Ill79}
    {\au{L. Illusie}}, \at{\textit{Complexe de de Rham-Witt et Cohomologie Cristalline}}, \jt{Ann. Sci. \'Ecole Norm. Sup. (4)}, \bvol{12}(4) (\yr{1979}) \pg{{501}--{661}}.
    
    \bibitem[INS25]{ishiro2025Perfectoida}
    {\au{S. Ishiro}, \au{K. Nakazato}, and \au{K. Shimomoto}}, \at{\textit{Perfectoid Towers and Their Tilts : With Anapplication to the \'Etale Cohomology Groups of Local Log-Regular Rings}}, \jt{Algebra \& Number Theory}, \bvol{19}(12) (\yr{2025}) \pg{{2307}--{2358}}.
    
    \bibitem[Ish24]{ishizuka2024Calculationa}
    {\au{R. Ishizuka}}, \at{\textit{A Calculation of the Perfectoidization of Semiperfectoid Rings}}, \jt{Nagoya Mathematical Journal}, \bvol{255} (\yr{2024}) \pg{{742}--{759}}.
    
    \bibitem[Ma+22]{ma2022Analogue}
    {\au{L. Ma}, \au{K. Schwede}, \au{K. Tucker}, \au{J. Waldron}, and \au{J. Witaszek}}, \at{\textit{An Analogue of Adjoint Ideals and PLT Singularities in Mixed Characteristic}}, \jt{Journal of Algebraic Geometry}, \bvol{31}(3) (\yr{2022}) \pg{{497}--{559}}.
    
    \bibitem[MS21]{ma2021Singularities}
    {\au{L. Ma} and \au{K. Schwede}}, \at{\textit{Singularities in Mixed Characteristic via Perfectoid Big Cohen--Macaulay Algebras}}, \jt{Duke Mathematical Journal}, \bvol{170}(13) (\yr{2021}) \pg{{2815}--{2890}}.
    
    \bibitem[Rob08]{roberts2008Root}
    {\au{P. C. Roberts}}, \at{\textit{The Root Closure of a Ring of Mixed Characteristic}}, preprint, (\yr{2008}). arXiv:\arxiv{\href{http://arxiv.org/abs/0810.0215}{0810.0215}}.
    
    \bibitem[Sch12]{scholze2012Perfectoida}
    {\au{P. Scholze}}, \at{\textit{Perfectoid Spaces}}, \jt{Publications math\'ematiques de l'IH\'ES}, \bvol{116}(1) (\yr{2012}) \pg{{245}--{313}}.
    
    \bibitem[Sta]{stacks-project}
    {\au{The Stacks Project Authors}}, \at{\textit{Stacks Project}}. \url{https://stacks.math.columbia.edu}.
    
    \bibitem[Tak25]{takaya2025Relative}
    {\au{Y. Takaya}}, \at{\textit{Relative Representability and Parahoric Level Structures}}, preprint, (\yr{2025}). arXiv:\arxiv{\href{http://arxiv.org/abs/2402.07135}{2402.07135}}.
    
    \bibitem[Yos25a]{yoshikawa2025Computation}
    {\au{S. Yoshikawa}}, \at{\textit{Computation Method for Perfectoid Purity and Perfectoid BCM-regularity}}, preprint, (\yr{2025}). arXiv:\arxiv{\href{http://arxiv.org/abs/2502.06108}{2502.06108}}.
    
    \bibitem[Yos25b]{Yoshikawa2025Fedder}
    {\au{S. Yoshikawa}}, \at{\textit{Fedder-type Criterion for Quasi-$F^e$-splitting and Quasi-$F$-regularity}}, preprint, (\yr{2025}). arXiv:\arxiv{\href{https://arxiv.org/abs/2505.09015}{2505.09015}}.
    \end{thebibliography}

\end{document}